\newtheorem{theorem}{Theorem}
\newtheorem{proposition}[theorem]{Proposition}
\newtheorem{lemma}[theorem]{Lemma}
\newtheorem{corollary}[theorem]{Corollary}
\newtheorem*{theorem*}{Theorem}
\theoremstyle{definition}
\newtheorem{definition}[theorem]{Definition}
\newcommand{\spe}[1]{#1}
\DeclareMathOperator{\Aut}{Aut}
\DeclareMathOperator{\Des}{Des}
\DeclareMathOperator{\Cyc}{Cyc}
\DeclareMathOperator{\Fix}{Fix}
\DeclareMathOperator{\rev}{rev}
\DeclareMathOperator{\vspan}{span}
\DeclareMathOperator{\des}{des}
\DeclareMathOperator{\asc}{asc}
\DeclareMathOperator{\ps}{ps}
\DeclareMathOperator{\cyc}{cyc}
\DeclareMathOperator{\sgn}{sgn}
\DeclareMathOperator{\qsym}{QSYM}
\newcommand{\qrep}{C(\mathfrak{G}, \qsym)}
\newcommand{\chifullevaluate}[4]{\Omega(#1,#2,#3;#4) }
\newcommand{\chifull}[3]{\Omega(#1,#2,#3) }
\newcommand{\chiexevaluate}[1]{\chifullevaluate{\spe{#1}}{\mathfrak{G}}{\mathbf{x}}{\mathfrak{g}}}
\newcommand{\chiex}[1]{\chifull{\spe{#1}}{\mathfrak{G}}{\mathbf{x}}}
\newcommand{\chievaluate}{\chiexevaluate{D}}
\newcommand{\chih}{\chiex{D}}
\newcommand{\chialphafullevaluate}[4]{\chi_{#1}(#2,#3;#4) }
\newcommand{\chialphafull}[3]{\chi_{#1}(#2,#3) }
\newcommand{\chialphaexevaluate}[1]{\chialphafullevaluate{\alpha}{\spe{#1}}{\mathfrak{G}}{\mathfrak{g}}}
\newcommand{\chialphaex}[1]{\chialphafull{\alpha}{\spe{#1}}{\mathfrak{G}} }
\newcommand{\genpoly}[4]{#1_{#2}(#3, #4)}
\newcommand{\genq}[4]{#1_{#2}(#3, #4, \mathbf{x} )}
\newcommand{\ggq}[2]{\ggq{#1}{\mathfrak{G}}{#2}{n}}
\begin{document}

\title{The Chromatic Quasisymmetric Class Function of a Digraph}

\author{Jacob A.~White}
\address{School of Mathematical and Statistical Sciences\\
University of Texas - Rio Grande Valley\\
Edinburg, TX 78539}
\keywords{Chromatic Polynomials, Quasisymmetric Functions, Poset partitions, Group actions} 

\subjclass{05E05, 05E18}
\date{\today}


\begin{abstract}

We introduce a quasisymmetric class function associated to a group acting on a double poset or on a directed graph. The latter is a generalization of the chromatic quasisymmetric function of a digraph introduced by Ellzey, while the latter is a generalization of a quasisymmetric function introduced by Grinberg. We prove representation-theoretic analogues of classical and recent results, including $F$-positivity, and combinatorial reciprocity theorems. We also deduce results for orbital quasisymmetric functions. We also study a generalization of the notion of strongly flawless sequences.

\end{abstract}
\maketitle

\section{Introduction}

Given a graph $G$, let $\mathfrak{G}$ be a subgroup of the automorphism group of $G$. Then $\mathfrak{G}$ acts on the set of $k$-colorings of $G$. If we $\chi_{\mathfrak{G}}(G,k)$ denote the number of orbits of this action, then the resulting function is a polynomial in $k$, called the \emph{orbital chromatic polynomial} and studied by Cameron and Kayibi \cite{cameron-kayibi}. Jochemko \cite{jochemko} found a combinatorial reciprocity theorem by giving a combinatorial interpretation to $(-1)^n \chi_{\mathfrak{G}}(G,-k)$.

Similarly, given a poset $P$, a subgroup $\mathfrak{G}$ of the automorphism group of $P$ acts on the set of order-preserving maps $\varphi: P \to \{1, \ldots, k \}$. If we let $\Omega_{\mathfrak{G}}(P,k)$ denote the number of orbits of this action, we obtain the orbital order polynomial that was introduced by Jochemko \cite{jochemko}, who proved a combinatorial reciprocity theorem for $\Omega_{\mathfrak{G}}(P,k)$. These results were later generalized to quasisymmetric functions associated to a double poset by Grinberg \cite{grinberg}. One of our primary interests is proving combinatorial reciprocity theorems for orbital polynomial invariants associated to combinatorial objects. 

 Stapledon \cite{stapledon} studied the equivariant Ehrhart quasipolynomial of a polytope. Let $\mathfrak{G}$ be a finite group acting linearly on a lattice $M'$ of rank $n$, and let $P$
be a $d$-dimensional $\mathfrak{G}$-invariant lattice polytope. Let $M$ be a translation of the
intersection of the affine span of $P$ and $M'$
to the origin, and consider the induced
representation $\rho: \mathfrak{G} \to GL(M)$ If $\chi(m)$ is the permutation character associated to the action of $G$ on the lattice of points in the $m$th dilate of $P$, then $\chi(m)$ is a quasipolynomial in $m$ whose coefficients are elements of $R(\mathfrak{G})$, the ring of virtual characters of $\mathfrak{G}$. Stapledon proved several results concerning the equivariant Ehrhart quasipolynomial.

Motivated by these past results, we study quasisymmetric class functions. These are class functions associated to the symmetry group $\mathfrak{G}$ of a combinatorial object, whose values are quasisymmetric functions. Equivalently, they are quasisymmetric functions whose coefficients are class functions.
If we let $\mathfrak{G}$ be the trivial group, then we obtain ordinary quasisymmetric functions (and should re-derive classical results). In fact, we can always obtain the ordinary quasisymmetric function by evaluating all characters at the identity element. We can also obtain corresponding orbital quasisymmetric functions, and various polynomial specializations.

The goal of this paper is to study a quasisymmetric class function generalization of the $\spe{D}$-partition enumerator of a double poset and of the chromatic polynomial of a directed graph. The former is an class function generalization of an invariant introduced by Grinberg, which in turn is a generalization of the labeled $P$-partition enumerator studied by Gessel \cite{gessel}. We define double posets and related terminology in Section \ref{sec:dbl}.
Given a double poset $D$ on a set $N$, let $\mathfrak{G}$ be a subset of the automorphism group of $D$. Then $\mathfrak{G}$ acts on the set of $D$-partitions. For $\mathfrak{g} \in \mathfrak{G}$, we define \[\Omega(D, \mathfrak{G}, \mathbf{x}; \mathfrak{g}) = \sum\limits_{\sigma: \mathfrak{g} \sigma = \sigma} \prod_{v \in N} x_{\sigma(v)}\] where we are summing over $D$-partitions fixed by $\mathfrak{g}$.
Then $\Omega(D, \mathfrak{G}, \mathbf{x})$ is a $\qsym$-valued class function.

Stanley introduced the chromatic symmetric function \cite{stanley-coloring-1}, a symmetric function generalization of the chromatic polynomial. This has been generalized to a chromatic quasisymmetric function by Shareshian and Wachs \cite{shareshian-wachs} and to directed graphs by Ellzey \cite{ellzey}. We will study a class function generalization of Ellzey's invariant, defined more explicitly in Section \ref{sec:digraph}. Much like the generalization of Shareshian and Wachs, our invariant has an extra variable $t$: our invariant is a class function that takes on values in the ring of quasisymmetric functions over the field $\mathbb{C}(t)$.
Given a digraph $G$ on a set $N$, let $\mathfrak{G}$ be a subset of the automorphism group of $G$. Then $\mathfrak{G}$ acts on the set of proper colorings of the underlying undirected graph. Ellzey defines a statistic $\asc(f)$ for a coloring. We show that this statistic is also $\mathfrak{G}$-invariant. For $\mathfrak{g} \in \mathfrak{G}$, we define
\[\chi(G, \mathfrak{G}, \mathbf{x}; \mathfrak{g}) = \sum\limits_{f: \mathfrak{g} f = f} t^{\asc(f)} \prod_{v \in N} x_{\sigma(v)}\]
where we sum over proper colorings of $G$. Much like in the case of double posets, the resulting invariant is a class function whose values are quasisymmetric functions over $\mathbb{C}[t]$.

Our primary interest is to study generalizations of $F$-positivity results, inequalities, and combinatorial reciprocity theorems. Let $C(\mathfrak{G}, \qsym)$ be the set of class functions with values in $\qsym$.
If we take an element $\chi(\mathbf{x})$ of $\qrep$ and a given basis $B$ for quasisymmetric functions, we say that $\chi(\mathbf{x})$ is \emph{$B$-effective} if $\chi(\mathbf{x})$ can expressed in the basis $B$ with coefficients that are characters of representations of $\mathfrak{G}$. If we let $\chi_1, \ldots, \chi_k$ denote the irreducible characters of $\mathfrak{G}$, then $R = \{\chi_i B_{\alpha}: i \in [k], B_{\alpha} \in B \}$ forms a basis for $C(\mathfrak{G}, \qsym)$. If $\chi(\mathbf{x})$ is $B$-effective, then $\chi(\mathbf{x})$ can be expressed as a linear combination of $\chi_i B_{\alpha}$ with nonnegative integer coefficients. 

We prove that the $D$-partition quasisymmetric class function for locally special posets is $F$-effective in Theorem \ref{thm:dblposfeffect}.
The notion of \emph{locally special} was first introduced by Grinberg, under the name tertispecial. He also suggests locally special as an alternative name. Our results specialize to both known and new results in the literature. We also give a proof that the corresponding orbital $D$-partition enumerator is $F$-positive. This implies that locally special double posets have $F$-positive $D$-partition enumerators, which appears to be new. It also implies $F$-positivity for skew Schur functions and for labeled $P$-partition enumerators.

We also prove in Theorem \ref{thm:feffectivedigraph} that $\chi(G, \mathfrak{G}, \mathbf{x})$ is also $F$-effective.

We study polynomial invariants as well. There are lots of results about inequalities for coefficients of chromatic polynomials of graphs with respect to different bases, including recent work that the coefficients of $(-1)^n\chi(G, -x)$ are unimodal \cite{huh} and strongly flawless \cite{kubitzke}.
Given a sequence $(f_0, \ldots, f_d)$, we say the sequence 
 is \emph{strongly flawless} if the following inequalities are satisfied:
\begin{enumerate}
    \item for $0 \leq i \leq \frac{d-1}{2}$, we have $f_i \leq f_{i+1}$.
    \item For $0 \leq i \leq \frac{d}{2}$, we have $f_i \leq f_{d-i}$.
\end{enumerate}
For this paper, we are focused on the sequence of coefficients for a polynomial $p(x)$ with respect to the basis $\binom{x}{k}$. We refer to these coefficients as the $f$-vector, and say $p(x)$ is \emph{strongly flawless} if the $f$-vector is strongly flawless and nonnegative. We have a representation-theoretic generalization: now the $f_i$ are required to be effective characters, and we interpret inequalities of the form $f_i \leq f_k$ as saying that $f_k - f_i$ is also a character. We refer to such a sequence of characters as \emph{effectively flawless}.
We show that $\Omega(D, \mathfrak{G}, x)$ and $\chi(G, \mathfrak{H}, x)$ are effectively flawless in Section \ref{sec:increasing}.
Then we obtain the following theorem:

We also discuss combinatorial reciprocity theorems. In \cite{stanley-crt}, he defines a combinatorial reciprocity theorem as `a result which establishes a kind of duality between two enumeration problems'. The book by Beck and Sanyal \cite{beck-sanyal} is full of many examples of such results. In general, we suppose that we have a vector subspace $V$ of a ring of formal power series, and that $V$ comes equipped with an involution $\omega$. Given two generating functions $f, g \in V$, a \emph{combinatorial reciprocity theorem} is the statement that $f = \omega g$. This is more general than the examples that appear in Beck and Sanyal's book, but still fits the general notion Stanley originally proposed.

In this paper, $V$ is usually vector space of class functions from $\mathfrak{G}$ to quasisymmetric functions of a fixed degree $d$, and $\omega = (-1)^d S \sgn$. Hence
a combinatorial reciprocity theorem for a quasisymmetric class function consists of showing that $(-1)^d S \sgn p(\mathbf{x})$ is $M$-realizable by giving an explicit description of the resulting permutation characters. The $\sgn$ term is the sign representation, which naturally arises as $\mathfrak{G}$ is always a a permutation group. It arises naturally in the work of Stapledon, Grinberg and Jochemko. We are also able to deduce combinatorial reciprocity theorems for corresponding orbital invariants, and for polynomial invariants as well.

We prove a combinatorial reciprocity theorem for double posets in Theorem \ref{thm:combrecdbl}, which involves taking duals of partial orders, and a combinatorial reciprocity theorem for digraphs in Theorem \ref{thm:combrecdig}, which involves group actions on pairs $(O,f)$, where $O$ is an acyclic orientation and $f$ is a compatible coloring.
 
The paper is organized as follows. In Section \ref{sec:prelim}, we define quasisymmetric functions, review some representation theory, and discuss set compositions. We also discuss polynomials, and quasisymmetric class functions. In Section \ref{sec:dbl}, we define double posets, $D$-partitions and the corresponding $D$-partition quasisymmetric class function.  Then we prove some basic facts about $\Omega(D, \mathfrak{G}, \mathbf{x})$. We also discuss some properties about locally special double posets that we need for later proofs. In Section \ref{sec:digraph}, we define the chromatic quasisymmetric class function, and provide a formula expressing $\chi(G, \mathfrak{G}, \mathbf{x})$ in terms of quasisymmetric class functions related to double posets coming from acyclic orientations of $G$. In Section \ref{sec:crt}, we prove our combinatorial reciprocity theorems for $\Omega(D, \mathfrak{G}, \mathbf{x})$ and $\chi(G, \mathfrak{G}, \mathbf{x})$. In Section \ref{sec:increasing}, we show our polynomial invariants are effectively flawless. We also show other properties about the quasisymmetric functions, and study some examples to show how properties fail for $h$-vectors. In Section \ref{sec:feffect}, we prove $F$-effectiveness for $\Omega(D, \mathfrak{G}, \mathbf{x})$ and $\chi(G, \mathfrak{G}, \mathbf{x})$. We also establish the corresponding $h$-effectiveness for the related polynomial invariants, and deduce some $F$-positivity results as corollaries. In Section \ref{sec:orbinv}, we define our orbital quasisymmetric functions, and deduce facts about these invariants from the results we have obtained about the quasisymmetric class functions. Finally, in Section \ref{sec:future}, we discuss some open problems.

\section{Preliminaries}
\label{sec:prelim}

Given a basis $B$ for a vector space $V$, and $\vec{\beta} \in B, \vec{v} \in V$, we let $[\vec{\beta}] \vec{v}$ denote the coefficient of $\vec{\beta}$ when we expand $\vec{v}$ in the basis $B$.

Let $\mathbf{x} = x_1, x_2, \ldots $ be a sequence of commuting indeterminates. Let $n \in \mathbb{N}$ and let $f \in \mathbb{K}[[\mathbf{x}]]$ be a homogeneous formal power series in $\mathbf{x}$, where the degree of every monomial in $f$ is $n$. Then $f$ is a \emph{quasisymmetric function} if it satisfies the following property:
for every $S = \{i_1, \ldots, i_k\}$ with $i_1 < i_2 < \cdots < i_k$, and every integer composition $\alpha_1 + \cdots + \alpha_k = n$, we have $[\prod_{j=1}^k x_{i_j}^{\alpha_j}]f = [\prod_{j=1}^k x_j^{\alpha_j}]f$. Often, we will define quasisymmetric functions that are generating functions over functions. Given a function $f: S \to \mathbb{N}$, we define $\mathbf{x}^f = \prod_{v \in S} x_{f(v)}$. For example, the chromatic symmetric function of a graph $G$ is defined as $\sum\limits_{f:V \to \mathbb{N}} \mathbf{x}^f$ where the sum is over all proper colorings of $G$.

Given an integer composition $\alpha = (\alpha_1, \alpha_2, \ldots, \alpha_k)$ of $n$, we let \[M_{\alpha} = \sum\limits_{i_1 < \cdots < i_k} \prod_{j=1}^k x_{i_j}^{\alpha_j}.\] These are the \emph{monomial quasisymmetric functions}, which form a basis for the ring of quasisymmetric functions.

The second basis we focus on is Gessel's basis of fundamental quasisymmetric functions.
The set of integer compositions is partially ordered by refinement. With respect to this partial order, the set of integer compositions forms a lattice.
The \emph{fundamental quasisymmetric function} $F_{\alpha}$ are defined by:
\[ F_{\alpha} = \sum\limits_{\beta \geq \alpha} M_{\beta}. \]

There is a well-known bijection between subsets of $[n-1]$ of size $k-1$ and integer compositions $\alpha \models n$ of length $k$, given by defining $S(\alpha) = \{\alpha_1, \alpha_1+\alpha_2, \ldots, \alpha_1+\alpha_2+\ldots + \alpha_{k-1} \}$. Under this bijection, the lattice of integer compositions is isomorphic to the Boolean lattice. Given a set $A = \{s_1, \ldots, s_{k-1} \}$ with $s_1 < s_2 < \cdots < s_{k-1}$, we have $S^{-1}(A) = (s_1, s_2-s_1, s_3-s_2, \ldots, s_k-s_{k-1}, n-s_k)$.

There is an important linear transformation on quasisymmetric functions called the \emph{antipode}: 
\[ S(M_{\alpha}) = (-1)^{\ell(\alpha)} \sum\limits_{\beta \leq \alpha} M_{\overleftarrow{\beta}} \]
where $\overleftarrow{\beta}$ is the composition given by reversing the order of $\beta$. Antipodes exist for any graded connected bialgebra, and are analogous to inversion for groups.

Our proofs rely a lot on working with set compositions, and quasisymmetric functions related to set compositions.
Given a finite set $N$, a \emph{set composition} is a sequence $(S_1, \ldots, S_k)$ of disjoint non-empty subsets whose union is $N$. We denote set compositions as $S_1|S_2|\cdots|S_k$, and refer to the sets $S_i$ as \emph{blocks}. We use $C \models N$ to denote that $C$ is a set composition of $N$, and let $\ell(C) = k$ be the length of the composition. Given $C$, the associated integer composition is $\alpha(C) = (|C_1|, |C_2|, \ldots, |C_k|)$. We refer to $\alpha(C)$ as the \emph{type} of $C$. We partially order set compositions by refinement. 
Finally given a set composition $C$ of type $\beta$ and $\alpha \leq \beta$, let $C_{\alpha}(C)$ be the unique set composition of type $\alpha$ such that $C_{\alpha}(C) \leq C$.

\subsection{Group actions and class functions}

 Given a group action $\mathfrak{G}$ on a set $X$, we let $X / \mathfrak{G}$ denote the set of orbits. For $x \in X$, $\mathfrak{G}_x$ is the stabilizer subgroup, and $\mathfrak{G}(x)$ is the orbit of $x$. Also, a \emph{transversal} is a subset $T \subset X$ such that $|T \cap O| = 1$ for every orbit $O$ of $X$. Finally, for $\mathfrak{g} \in \mathfrak{G}$, we let $\Fix_{\mathfrak{g}}(X) = \{x \in X: \mathfrak{g}x = x \}$.

There is an action of $\mathfrak{S}_N$ on the collection of all set compositions of $N$. Given a permutation $\mathfrak{g} \in \mathfrak{S}_N$, and a set composition $C \models N$, we let  \[\mathfrak{g} C = \mathfrak{g}(C_1)| \mathfrak{g}(C_2) | \cdots | \mathfrak{g}(C_k).\] Then we obtain an action of $\mathfrak{S}_N$ on the collection of all set compositions of $N$.

We assume familiarity with representation theory of finite groups - see \cite{fulton-harris} for basic definitions. Recall that, given any group action of $\mathfrak{G}$ on a finite set $X$, there is a group action on $\mathbb{C}^X$ as well, which gives rise to a representation. The resulting representations are called \emph{permutation representations}. We are working with representations over $\mathbb{C}$. We let $C(\mathfrak{G})$ be the ring of class functions of $\mathfrak{G}$. There is an orthonormal basis of $C(\mathfrak{G})$ given by the characters of the irreducible representations of $\mathfrak{G}$. We refer to elements $\chi \in C(\mathfrak{G})$ that are integer combinations of characters as \emph{virtual characters}, and elements that are nonnegative integer linear combinations as \emph{effective characters}. Finally, we see $\chi$ is a \emph{permutation character} if it is the character of a permutation representation. We partially order $C(\mathfrak{G})$ by saying $\chi \leq_{\mathfrak{G}} \psi$ if $\psi - \chi$ is an effective character.

Let $R$ be a $\mathbb{C}$-algebra. Let $C(\mathfrak{G}, R)$ be the set of class functions from $\mathfrak{G}$ to $R$. That is, for every $\mathfrak{g}, \mathfrak{h} \in \mathfrak{G}$, and $\chi \in C(\mathfrak{G}, R)$, we have $\chi(\mathfrak{hg}\mathfrak{h}^{-1}) = \chi(\mathfrak{g}).$ For our paper, $R$ is usually $\qsym$ or $\mathbb{C}[x]$. 

Let $\mathbf{B}$ be a basis for $R$. For $b \in \mathbf{B}$, $\mathfrak{g} \in \mathfrak{G}$, and $\chi \in C(\mathfrak{G}, R)$, let $\chi_b(\mathfrak{g}) = [b]\chi(\mathfrak{g})$. Then $\chi_b$ is also a class function. Thus we can write $\chi = \sum\limits_{b \in \mathbf{B}} \chi_b b$. Conversely, given a family $\chi_b$ of class functions, one for each $b \in \mathbf{B}$ the function $\chi$ defined by $\chi(\mathfrak{g}) = \sum\limits_{b \in \mathbf{B}} \chi_b(\mathfrak{g}) b$ is a class function in $C(\mathfrak{G}, R)$.
We say that $\chi$ if $\mathbf{B}$-effective if $\chi_b$ is an effective character for all $b \in \mathbf{B}$. We say that $\chi$ is $\textbf{B}$-realizable if $\chi_b$ is a permutation character for all $b$.
If $\mathbf{B}$ has a partial order on it, then we say $\chi$ is $\mathbf{B}$-increasing if for all $b \leq c$ in $\mathbf{B}$, we have $\chi_b \leq_{\mathfrak{G}} \chi_c$. Assuming $\chi$ is $\mathbf{B}$-effective, this is equivalent to saying that $\chi_c$ is the character for the representation of $\mathfrak{G}$ on some module $V$, and $\chi_b$ is the character of a representation of a submodule of $V$.

A \emph{quasisymmetric class function} is an element of $\qrep$.
\begin{proposition}
Let $\chi \in \qrep$ have degree $d$. If $\chi$ if $F$-effective, then $\chi$ is $M$-increasing.
\end{proposition}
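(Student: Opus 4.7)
The plan is to unpack the definitions and exploit the triangular relationship between the $F$-basis and $M$-basis. Writing $\chi = \sum_\alpha \chi_{F,\alpha} F_\alpha$, $F$-effectiveness means each $\chi_{F,\alpha} \in C(\mathfrak{G})$ is an effective character. To show $M$-increasing, I need to show that for every refinement $\alpha \leq \beta$ of compositions (of the same integer $d$), the class function $\chi_{M,\beta} - \chi_{M,\alpha}$ is an effective character.

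First I would substitute the defining identity $F_\alpha = \sum_{\gamma \geq \alpha} M_\gamma$ into the $F$-expansion of $\chi$ and swap the order of summation, yielding
\[
\chi = \sum_{\gamma} \left( \sum_{\alpha \leq \gamma} \chi_{F,\alpha} \right) M_\gamma,
\]
so that $\chi_{M,\gamma} = \sum_{\alpha \leq \gamma} \chi_{F,\alpha}$. The sum runs over compositions of the same integer $d$, since $\chi$ is homogeneous of degree $d$ and the refinement order respects this grading.

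Next I would fix $\beta \leq \gamma$ in the refinement order and compute
\[
\chi_{M,\gamma} - \chi_{M,\beta} = \sum_{\alpha \leq \gamma} \chi_{F,\alpha} - \sum_{\alpha \leq \beta} \chi_{F,\alpha} = \sum_{\alpha \leq \gamma,\ \alpha \not\leq \beta} \chi_{F,\alpha},
\]
using transitivity to ensure $\{\alpha : \alpha \leq \beta\} \subseteq \{\alpha : \alpha \leq \gamma\}$. Since each $\chi_{F,\alpha}$ is effective by hypothesis, the right-hand side is a nonnegative integer sum of effective characters, hence effective. Therefore $\chi_{M,\beta} \leq_{\mathfrak{G}} \chi_{M,\gamma}$, which is precisely the $M$-increasing condition.

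There is no real obstacle here: the argument is purely a rearrangement of the $F$-to-$M$ change of basis, combined with the observation that effective characters are closed under sums and that refinement is transitive. The only subtlety to flag is making sure the indexing set $\{\alpha \leq \gamma,\ \alpha \not\leq \beta\}$ is correctly identified and that the summation is carried out among compositions of the fixed integer $d$.
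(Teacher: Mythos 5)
Your proof is correct and is essentially the paper's argument: both rest on the change of basis $[M_\gamma]\chi = \sum_{\alpha \leq \gamma} \chi_{F,\alpha}$ and the observation that for $\beta \leq \gamma$ the difference of $M$-coefficients is a partial sum of the effective $F$-coefficients, hence effective. The only cosmetic difference is that the paper phrases this by realizing the $F$-coefficients as characters of modules $W_\alpha$ and exhibiting $\bigoplus_{\alpha\leq\beta}W_\alpha$ as a submodule of $\bigoplus_{\alpha\leq\gamma}W_\alpha$, whereas you work directly with sums of effective characters.
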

\begin{proof}
Let $\chi = \sum_{\alpha \models d} \psi_{\alpha} F_{\alpha}$. Then there exists $\mathfrak{G}$-modules $W_{\alpha}$ such that $\psi_{\alpha}$ is the character of the representation of $\mathfrak{G}$ on $W_{\alpha}$. If we let $V_{\alpha} = \bigoplus_{\beta \leq \alpha} W_{\beta}$, Then $V_{\alpha}$ has character $\sum_{\beta \leq \alpha} \psi_{\beta} = [M_{\alpha}] \chi$. 

Let $\alpha \leq \beta$. Then we see that $V_{\alpha}$ is a submodule of $V_{\beta}$. Hence $\chi_{\beta} - \chi_{\alpha}$ is the character of the complement of $V_{\alpha}$ in $V_{\beta}$. Thus $\chi$ is $M$-increasing.
\end{proof}

Given a subgroup $\mathfrak{H}$ of $\mathfrak{G}$, and a 
class function $\chi \in C(\mathfrak{H}, R)$, We define the \emph{induced class function} $\chi\uparrow_{\mathfrak{H}}^{\mathfrak{G}} \in C(\mathfrak{G}, R)$ by \[\chi\uparrow_{\mathfrak{H}}^{\mathfrak{G}}(\mathfrak{g}) = \frac{1}{|\mathfrak{H}|} \sum\limits_{\mathfrak{k} \in \mathfrak{G}: \mathfrak{kg}\mathfrak{k}^{-1} \in \mathfrak{H}} \chi(\mathfrak{kg}\mathfrak{k}^{-1}).\]

Finally, we define a function $\langle \cdot, \cdot \rangle: C(\mathfrak{G}, R) \times C(\mathfrak{G}, R) \to R$ by 
$\langle \chi, \psi \rangle = \frac{1}{|\mathfrak{G}|} \overline{\chi}(\mathfrak{g}) \psi(\mathfrak{g})$ where $\overline{x}$ is the complex conjugate. In the case where $R = \mathbb{C}$, this is the usual inner product on class functions.

\begin{proposition}
\label{prop:global} 
Let $\mathfrak{G}$ be a finite group, let $R$ be a $\mathbb{C}$-algebra with basis $\mathbf{B}$. Fix $\chi \in C(\mathfrak{G}, R)$.
\begin{enumerate}[label={(\arabic*)},itemindent=1em]
\item For $b \in \mathbf{B}$, we have $[b]\left(\chi\uparrow_{\mathfrak{H}}^{\mathfrak{G}}\right) = \left([b]\chi\right)\uparrow_{\mathfrak{H}}^{\mathfrak{G}}$.
\item Given an irreducible character $\psi$, we have $\langle \chi, \psi \rangle = \sum\limits_{b, c \in B} \langle \chi_b, \psi_c \rangle b \cdot c$.
\item If $\chi$ is $\mathbf{B}$-effective, and $\psi$ is an irreducible character, then $\langle \psi, \chi \rangle$ is $\mathbf{B}$-positive. \label{prop:charpositive}
\item Suppose $\textbf{B}$ is partially ordered. Let $\psi \in C(\mathfrak{G})$. If $\chi$ is $B$-increasing, then for all $b \leq c$ in $\textbf{B}$ we have $[b]\langle \psi, \chi \rangle \leq [c]\langle \psi, \chi \rangle.$  \label{prop:charincreasing}
\end{enumerate}
\end{proposition}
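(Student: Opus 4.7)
The plan is to treat all four parts with a single common tactic: expand each class function in the basis $\mathbf{B}$ and then invoke bilinearity together with standard properties of ordinary characters. Throughout I will write $\chi = \sum_{b \in \mathbf{B}} \chi_b \cdot b$ with $\chi_b \in C(\mathfrak{G})$.

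For (1), the point is that the map $[b] \colon C(\mathfrak{G}, R) \to C(\mathfrak{G})$ is $\mathbb{C}$-linear, and induction is itself $\mathbb{C}$-linear and defined pointwise by averaging class-function values over a subset of $\mathfrak{G}$. These two operations act on disjoint data (the ring $R$ versus the group $\mathfrak{G}$), so substituting $\chi = \sum_b \chi_b \cdot b$ into the defining sum for $\chi\uparrow_{\mathfrak{H}}^{\mathfrak{G}}$ and pulling each $b$ outside the average gives the identity in one step.

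For (2), I would substitute both expansions $\chi = \sum_b \chi_b \cdot b$ and $\psi = \sum_c \psi_c \cdot c$ directly into $\langle \chi, \psi \rangle = \frac{1}{|\mathfrak{G}|} \sum_{\mathfrak{g}} \overline{\chi(\mathfrak{g})} \psi(\mathfrak{g})$, treating each basis element as a constant under conjugation of class-function values so that $\overline{\chi_b(\mathfrak{g})\, b} = \overline{\chi_b(\mathfrak{g})}\, b$. Collecting terms $b \cdot c$ outside the sum over $\mathfrak{g}$ and recognising each inner sum as $\langle \chi_b, \psi_c \rangle$ yields the stated formula. This is a routine bilinearity computation.

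Parts (3) and (4) follow quickly from (2). For (3), the irreducible character $\psi \in C(\mathfrak{G})$ is viewed inside $C(\mathfrak{G}, R)$ via $\psi \cdot 1_R$, so at most one $\psi_c$ is nonzero and (2) reduces to $\langle \psi, \chi \rangle = \sum_b \langle \psi, \chi_b \rangle\, b$; each coefficient is the multiplicity of $\psi$ in the effective character $\chi_b$, hence a nonnegative integer, so $\langle \psi, \chi \rangle$ is $\mathbf{B}$-positive. For (4), the same expansion gives $[b]\langle \psi, \chi \rangle = \langle \psi, \chi_b \rangle$; since $b \leq c$ implies $\chi_c - \chi_b$ is an effective character by $\mathbf{B}$-increasingness, pairing with the (irreducible) character $\psi$ produces $\langle \psi, \chi_c \rangle - \langle \psi, \chi_b \rangle \geq 0$, as desired. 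There is no real obstacle in this proposition --- each part is a formal consequence of bilinearity and well-known facts about characters. The only care required is in correctly interpreting $\psi$, since the statements oscillate between viewing it in $C(\mathfrak{G})$ and in $C(\mathfrak{G}, R)$; identifying $C(\mathfrak{G})$ with its image under $\psi \mapsto \psi \cdot 1_R$ resolves this in every case.
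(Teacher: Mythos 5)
Your proposal is correct and follows essentially the same route as the paper: expand $\chi$ (and $\psi$) in the basis $\mathbf{B}$, use linearity of induction and bilinearity of the pairing to get (1) and (2), and then deduce (3) and (4) by pairing $\psi$ with the effective characters $\chi_b$ and $\chi_c-\chi_b$ respectively. The only cosmetic difference is your identification of $\psi$ with $\psi\cdot 1_R$, where the paper simply applies $[b]$ and linearity directly; both yield $[b]\langle\psi,\chi\rangle=\langle\psi,[b]\chi\rangle$.
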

\begin{proof}
Let $\mathfrak{g} \in \mathfrak{G}$. Then
\begin{align*} \chi\uparrow_{\mathfrak{H}}^{\mathfrak{G}}(\mathfrak{g}) & = \frac{1}{|\mathfrak{H}|} \sum\limits_{\mathfrak{k} \in \mathfrak{G}: \mathfrak{kg}\mathfrak{k}^{-1} \in \mathfrak{H}} \chi(\mathfrak{kg}\mathfrak{k}^{-1}) \\
& = \frac{1}{|\mathfrak{H}|} \sum\limits_{\mathfrak{k} \in \mathfrak{G}: \mathfrak{kg}\mathfrak{k}^{-1} \in \mathfrak{H}} \sum\limits_{b \in B} \chi_b(\mathfrak{kg}\mathfrak{k}^{-1}) b \\ 
& = \sum\limits_{b \in B} \left( \frac{1}{|\mathfrak{H}|} \sum\limits_{\mathfrak{k} \in \mathfrak{G}: \mathfrak{kg}\mathfrak{k}^{-1} \in \mathfrak{H}} \chi_b(\mathfrak{kg}\mathfrak{k}^{-1})\right) b \\
& = \sum\limits_{b \in B} \chi_b \uparrow_{\mathfrak{H}}^{\mathfrak{G}}(\mathfrak{g}) b.
\end{align*} 
Thus we see that the first result follows from comparing the coefficient of $b$ on both sides.

Let $\psi \in C(\mathfrak{G}, R)$. Then
\begin{align*}
\langle \chi, \psi \rangle & = \frac{1}{|\mathfrak{G}|} \sum\limits_{\mathfrak{g} \in \mathfrak{G}} \bar{\chi}(\mathfrak{g}) \psi(\mathfrak{g}) \\
& = \frac{1}{|\mathfrak{G}|} \sum\limits_{\mathfrak{g} \in \mathfrak{G}} \left(\sum\limits_{b \in B} \bar{\chi_b}(\mathfrak{g}) b \right) \left(\sum\limits_{c \in B} \psi_c(\mathfrak{g}) c \right) \\
& = \sum\limits_{b \in B} \sum\limits_{c \in B} \frac{1}{|\mathfrak{G}|} \sum\limits_{\mathfrak{g} \in \mathfrak{G}}  \bar{\chi_b}(\mathfrak{g}) \psi_c(\mathfrak{g}) b \cdot c.
\end{align*}

For the third claim, let $\psi$ be an irreducible character. Let $b \in \mathbf{B}$. Using the second claim, we have $[b] \langle \psi, \chi \rangle = \langle \psi, [b]\chi \rangle \geq 0.$ Hence $\langle \psi, \chi \rangle$ is $B$-positive.

Now suppose that $\chi$ is $\textbf{B}$-increasing. Let $b \leq c \in \mathbf{B}$. Then there is a representation of $\mathfrak{G}$ whose character is $\rho := [c] \chi - [b] \chi$. Using the second claim, we see that \[[c]\langle \psi, \chi \rangle - [b] \langle \psi, \chi \rangle = \langle \psi, [c] \chi - [b] \chi \rangle = \langle \psi, \rho \rangle \geq 0.\] Hence $[c]\langle \psi, \chi \rangle \geq [b]\langle \psi, \chi \rangle$.
\end{proof}

\subsection{Principal specialization}

Given a polynomial $p(x)$ of degree $d$, define $h(t) = (1-t)^{d+1}\sum_{m \geq 0} p(m) t^m$. The sequence of coefficients of $h(t)$ is the \emph{$h$-vector} of $p(x)$. 
We define the $f$-vector $(f_0, \cdot, f_d)$ via $p(x) = \sum_{i=0}^d f_i \binom{x}{i}$. We say that $p(x)$ is \emph{strongly flawless} if the following inequalities are satisfied:
\begin{enumerate}
    \item for $0 \leq i \leq \frac{d-1}{2}$, we have $f_i \leq f_{i+1}$.
    \item For $0 \leq i \leq \frac{d}{2}$, we have $f_i \leq f_{d-i}$.
\end{enumerate}
There is a lot of interest in log-concave and unimodal sequences in combinatorics. We consider strongly flawless sequences to also be interesting, as strongly flawless unimodal sequences can be seen as a generalization of symmetric unimodal sequences. Examples of results with strongly flawless sequences include the work of Hibi \cite{hibi} and Juhnke-Kubitzke and Van Le \cite{kubitzke}.

Given a quasisymmetric function $F(\mathbf{x})$ of degree $d$, there is an associated polynomial $\ps(F)(x)$ given by principal specialization. For $x \in \mathbb{N}$, we set \[x_i = \begin{cases} 1 & i \leq x \\ 0 & i > x
\end{cases} \]
The resulting sequence is a polynomial function in $x$ of degree $d$, which we denote by $ps(F)(x)$.
If we write $F(\mathbf{x}) = \sum\limits_{\alpha \models d} c_{\alpha} M_{\alpha}$, then $f_i = \sum\limits_{\alpha \models d: \ell(\alpha) = i} c_{\alpha}$.
Similarly, if we write $F(\mathbf{x}) = \sum\limits_{\alpha \models n} d_{\alpha} F_{\alpha}$, then $h_i = \sum\limits_{\alpha \models d: \ell(\alpha) = i} d_{\alpha}$.

The set $\mathbb{K}[x]$ is a Hopf algebra, with antipode given by $Sp(x) = p(-x)$. Also, $\varphi: \qsym \to \mathbb{K}[x]$ given by $\varphi(F(\mathbf{x})) = \ps(F)(x)$ is a Hopf algebra homomorphism and $\varphi(SF(\mathbf{x})) = f(-x)$. 

Let $F(\mathbf{x}) \in \qsym$ be of degree $n$, and $\mathfrak{g} \in \mathfrak{G}$. Define $\ps(F) \in C(\mathfrak{G}, \mathbb{C}[x])$ by $\ps(F)(\mathfrak{g}) = \ps(F(\mathbf{x}; \mathfrak{g})).$ Then $\ps(F)$ is also the principal specialization, resulting in an polynomial class function. If we write $\ps(F) = \sum\limits_{i=0}^d f_i \binom{x}{i}$, then $(f_0, \ldots, f_d)$ is the \emph{equivariant $f$-vector} of $\ps(F)$, which consists of permutation characters.
If we write $\sum\limits_{m \geq 0} \ps(F) t^m = \frac{h(t)}{(1-t)^n}$, then the coefficients of $h(t)$ is the \emph{equivariant $h$-vector} of $\ps(F)$. Note that the entries of the equivariant $h$-vector are virtual characters. We say $\ps(F)$ is $h$-effective if the entries are effective characters. We say that $\ps(F)$ is \emph{effectively flawless} if we have the following system of inequalities:
\begin{enumerate}
    \item for $0 \leq i \leq \frac{d-1}{2}$, we have $f_i \leq_{\mathfrak{G}} f_{i+1}$.
    \item For $0 \leq i \leq \frac{d}{2}$, we have $f_i \leq_{\mathfrak{G}} f_{d-i}$.
\end{enumerate}

We can obtain results about polynomial class functions from the corresponding quasisymmetric class functions.
\begin{proposition}
\label{prop:global2}
Let $F(\mathbf{x}) \in \qsym$ be of degree $d$, and $\mathfrak{g} \in \mathfrak{G}$.  
\begin{enumerate}[label={(\arabic*)},itemindent=1em]
    \item If we write $F(\mathbf{x}) = \sum\limits_{\alpha \models d} \chi_{\alpha} M_{\alpha}$, then $\ps(F) = \sum\limits_{i=0}^d \chi_{\alpha} \binom{x}{\ell(\alpha)}$.
    \item If we write $F(\mathbf{x}) = \sum\limits_{\alpha \models d} \psi_{\alpha} F_{\alpha}$, then $h(t) = \sum\limits_{\alpha \models d} \psi_{\alpha}t^{\ell(\alpha)}$. If $F(\mathbf{x})$ if $F$-effective, then $\ps(F)$ is $h$-effective. \label{prop:ftoheffect}
    \item If $G(\mathbf{x}) \in \qrep$ with $(-1)^d \sgn S F(\mathbf{x}) = G(\mathbf{x}),$ then \[(-1)^d \sgn \ps(F)(-x) = \ps(G)(x).\]
    \item If $F(\mathbf{x})$ is $M$-realizable and $M$-increasing, then $\ps(F)$ is effectively flawless. \label{prop:ftohorbit}
    \item Let $\psi$ be an irreducible character. If $F(\mathbf{x})$ is $F$-effective, then $\langle \psi, \ps(F) \rangle$ is $h$-positive. If $F(\mathbf{x})$ is $M$-increasing, then $\langle \psi, \ps(F) \rangle$ is strongly flawless. \label{prop:flawlessorb}
\end{enumerate}
\end{proposition}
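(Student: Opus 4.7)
The plan is to handle each of the five parts in sequence, with (4) and (5) drawing on the earlier ones. For (1), I would observe that under principal specialization $x_j \mapsto [j \leq x]$, the monomial quasisymmetric function $M_\alpha$ specializes to $\binom{x}{\ell(\alpha)}$ (counting chains $i_1 < \cdots < i_{\ell(\alpha)}$ in $[x]$); applying this pointwise in $\mathfrak{g}$ to $F(\mathbf{x};\mathfrak{g}) = \sum_\alpha \chi_\alpha(\mathfrak{g}) M_\alpha$ yields the claimed formula. For (2), I would first verify the classical identity
\[\sum_{m \geq 0} \ps(F_\alpha)(m)\, t^m \;=\; \frac{t^{\ell(\alpha)}}{(1-t)^{d+1}},\]
either directly from $F_\alpha = \sum_{\beta \geq \alpha} M_\beta$ combined with part (1), or via a short bijective argument with weakly increasing sequences having prescribed strict ascents. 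Summing with class-function coefficients $\psi_\alpha$ and clearing $(1-t)^{d+1}$ yields the identity $h(t) = \sum_\alpha \psi_\alpha\, t^{\ell(\alpha)}$ in $C(\mathfrak{G},\mathbb{C}[t])$, so that $h_i = \sum_{\ell(\alpha) = i} \psi_\alpha$ is a sum of effective characters whenever $F$ is $F$-effective. Part (3) is immediate from the Hopf-algebra compatibility $\ps(SF(\mathbf{x})) = \ps(F)(-x)$ recalled just before the proposition: evaluating pointwise at $\mathfrak{g}$ on $G(\mathbf{x};\mathfrak{g}) = (-1)^d \sgn(\mathfrak{g}) SF(\mathbf{x};\mathfrak{g})$ produces the desired identity of polynomial class functions.

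Part (4) is the substantive combinatorial step. From (1), the equivariant $f$-vector is $f_i = \sum_{\alpha \models d,\, \ell(\alpha) = i} \chi_\alpha$. The key lemma I would establish is the existence of an injection $\phi$ from compositions of $d$ of length $i$ into those of length $i+1$ (respectively, length $d-i$) such that $\phi(\alpha)$ refines $\alpha$, valid for $i \leq (d-1)/2$ (respectively, $i \leq d/2$). Under the bijection $\alpha \leftrightarrow S(\alpha) \subseteq [d-1]$ of the preliminaries, refinement becomes containment of subsets, so such a $\phi$ amounts to an injection between two levels of the Boolean lattice $B_{d-1}$ respecting containment; this is supplied by a symmetric chain decomposition provided the two levels lie on the same side of the middle. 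Given $\phi$, I would write
\[f_{i+1} - f_i \;=\; \sum_{\ell(\alpha) = i} \bigl(\chi_{\phi(\alpha)} - \chi_\alpha\bigr) \;+\; \sum_{\beta \notin \mathrm{image}(\phi)} \chi_\beta;\]
the first sum is effective by $M$-increasing and the second by $M$-realizability. The inequality $f_i \leq_{\mathfrak{G}} f_{d-i}$ is handled in the same way, using the injection from length $i$ to length $d - i$.

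For (5), the $h$-positivity claim follows by combining (2) with Proposition \ref{prop:global}\ref{prop:charpositive}: $F$-effectivity of $F$ gives $h$-effectivity of $\ps(F)$, so pairing with the irreducible character $\psi$ yields a nonnegative coefficient of each $t^i$ in the $h$-polynomial of $\langle \psi, \ps(F) \rangle$. For the strongly flawless claim, I would use (1) to write
\[\langle \psi, \ps(F) \rangle \;=\; \sum_i \Bigl(\sum_{\ell(\alpha) = i} \langle \psi, \chi_\alpha \rangle\Bigr) \binom{x}{i},\]
and then apply Proposition \ref{prop:global}\ref{prop:charincreasing} to the differences $\chi_{\phi(\alpha)} - \chi_\alpha$ coming from the injection of part (4) to obtain the required $f$-vector inequalities. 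The main obstacle is the combinatorial construction of $\phi$ in part (4) with the correct refinement-preserving property; once that lemma is in hand, each of the five items reduces to a short bookkeeping argument transporting (1) and (2) across the Hopf map $\ps$ or through the pairing with $\psi$.
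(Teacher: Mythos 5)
Your proposal is correct and follows essentially the same route as the paper: parts (1)--(3) by specializing the basis elements pointwise in $\mathfrak{g}$ (using $\ps(M_\alpha)=\binom{x}{\ell(\alpha)}$ and the antipode compatibility), part (4) via a symmetric chain decomposition of the composition/Boolean lattice supplying refinement-respecting injections between length levels, and part (5) by pairing with $\psi$ and invoking the earlier inner-product propositions. The only cosmetic difference is that in (4) you split $f_j - f_i$ directly into effective characters using the injection together with $M$-realizability, whereas the paper lifts the same injection to $\mathfrak{G}$-equivariant module embeddings $V_i \hookrightarrow V_j$; the two bookkeepings are equivalent.
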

\begin{proof}
The first three results are proven in a similar manner.
Let $\mathfrak{g} \in \mathfrak{G}$. Then $\ps(F)(x; \mathfrak{g}) = \ps(F(\mathbf{x}; \mathfrak{g}))(x)$. Since $F(\mathbf{x}, \mathfrak{g}) = \sum\limits_{\alpha \models d} \chi_{\alpha}(\mathfrak{g}) M_{\alpha}$, we have \[\ps(F(\mathbf{x}; \mathfrak{g})) = \sum\limits_{\alpha \models d} \chi_{\alpha}(\mathfrak{g}) \binom{x}{\ell(\alpha)}.\] The result follows.

For the fourth result, let $d$ be the degree of $F(\mathbf{x})$. For each $\alpha \models d$, let $V_{\alpha}$ be a $\mathfrak{G}$-module with character $[M_{\alpha}]F(\mathbf{x})$. Since $F(\mathbf{x})$ is $M$-increasing, we know there exists injective $\mathfrak{G}$-invariant functions $\theta_{\alpha, \beta}: V_{\alpha} \to V_{\beta}$ for every $\alpha \leq \beta \models d$. 
We let $V_i = \bigoplus\limits_{\alpha \models d: \ell(\alpha) = i} V_{\alpha}$. Then the character of $V_i$ is $f_i$. To show the inequalities, it suffices to find $\mathfrak{G}$-invariant injections between $V_i$ and $V_j$. Then $f_j - f_i$ is the character of the complement of $V_i$ in $V_j$.

We need to recall that the boolean lattice, and hence the lattice of integer compositions, has a symmetric chain decomposition, a result due to DeBruijn \cite{debruijn}. Let $C(d)$ be the set of integer compositions of $d$. A symmetric chain decomposition is a partition of $C(d)$ into saturated chains $c_1, \ldots, c_m$ with the property that, for each chain $c_i$, the sum of the ranks of the first and last element of $c_i$ is $d$. 

Fix a symmetric chain decomposition $D$. Fix integers $i$ and $j$ such that $1 \leq i < j \leq d - i$. Consider an integer composition $\alpha$ with $\ell(\alpha) = i$. Then there exists a chain $x_1 < x_2 < \cdots < x_k$ in $D$ with $x_{i-\ell(x_1)+1} = \alpha$. Define $\varphi_{i,j}(\alpha) = x_{j-\ell(x_1)+1}$.
We see that the following two facts are true:
\begin{enumerate}
    \item If $ i \leq \frac{d-1}{2}$, then $\varphi_{i,i+1}$ is injective.
    \item If $i < \frac{d}{2}$, then $\varphi_{i,d-i}$ is a bijection.
\end{enumerate}

Let $1 \leq i \leq \frac{d-1}{2}$, and let $\alpha \models d$ with $\ell(\alpha) = i$. We define $\theta_{i,j}: V_i \to V_j$ by requiring $\theta_{i,j}|_{V_{\alpha}} = \theta_{\alpha, \varphi_{i,j}(\alpha)}$. Then $\theta_{i,j}(V_{\alpha}) \subseteq V_{\varphi_{i,j}(\alpha)}$. 

Thus $\theta_{i,j}$ is an injective $\mathfrak{G}$-invariant map.
Thus $V_i$ is isomorphic to a submodule of $V_{i+1}$, and we have $f_i \leq_{\mathfrak{G}} f_{i+1}$.

Now let $i \leq \frac{d}{2}$.
Let $\theta_i: V_i \to V_{d-i}$ be given by $\theta_i|_{V_{\alpha}} = \theta_{\alpha, \varphi_{i, d-i}(\alpha)}$. By a similar argument, $\theta_i$ is injective and $\mathfrak{G}$-invariant. Hence $V_i$ is isomorphic to a submodule of $V_{d-i}$, and $f_i \leq_{\mathfrak{G}} f_{d-i}$.

For the last result, let $\psi$ be an irreducible character. A simple calculation shows that $\langle \psi, \ps(F) \rangle = \ps( \langle \psi, F \rangle )$. If $F(\mathbf{x})$ is $F$-effective, then $\langle \psi, F \rangle$ is $F$-positive. Since the entries of the $h$-vector are non-negative sums of coefficients in the $F$-basis, the $h$-vector of $\ps( \langle \psi, F, \rangle)$ is non-negative.

Finally, suppose that $F(\mathbf{x})$ is $M$-increasing. Viewing $\langle \psi, F(\mathbf{x}) \rangle$ as a quasisymmetric class function for the trivial group. Then $\langle \psi, F \rangle$ is $M$-increasing. Hence $\ps(\langle \psi, F \rangle)$ is effectively flawless. Since we are working with the trivial group, we conclude that $\langle \psi, \ps(F) \rangle$ is strongly flawless.

\end{proof}

\section{Double Posets}
\label{sec:dbl}

Now we will discuss double posets. The Hopf algebra of double posets was introduced by Malvenuto and Reutenauer \cite{malvenuto}. Grinberg associated a quasisymmetric function to any double poset, which is a generalization of Gessel's $P$-partition enumerator. This quasisymmetric function is studied extensively by Grinberg \cite{grinberg}, who proved a combinatorial reciprocity theorem.

Given a finite set $N$, a \emph{double poset} on $N$ is a triple $(N, \leq_1, \leq_2)$ where $\leq_1$ and $\leq_2$ are both partial orders on $N$. Often for standard poset terminology, we will use $\leq_i$ as a prefix to specify which of the two partial orders is being referred to. For instance, a $\leq_1$-order ideal is a subset that is an order ideal with respect to the first partial order, and a $\leq_1$-covering relation refers to a pair $(x,y)$ such that $x \prec_1 y$.

\begin{figure}
\begin{center}
\begin{tabular}{cc}
\begin{tikzpicture}
  \node[circle, draw=black, fill=white] (b) at (0,2) {$b$};
  \node[circle, draw=black, fill=white] (a) at (0,0) {$a$};
  \node[circle, draw=black, fill=white] (c) at (2,0) {$c$};
  \node[circle, draw=black, fill=white] (d) at (2,2) {$d$};
 \draw[-Latex] (b) -- (a);
  \draw[-Latex] (d) edge (c);
  \draw[-Latex] (b) -- (c);
    \draw[-Latex] (d) -- (a);

\end{tikzpicture}

& 

\begin{tikzpicture}
  \node[circle, draw=black, fill=white] (b) at (0,2) {$c$};
  \node[circle, draw=black, fill=white] (a) at (0,0) {$b$};
  \node[circle, draw=black, fill=white] (c) at (2,0) {$d$};
  \node[circle, draw=black, fill=white] (d) at (2,2) {$a$};
 \draw[-Latex] (b) -- (a);
  \draw[-Latex] (d) edge (c);

\end{tikzpicture}
\end{tabular}
\end{center}
\label{fig:doubleposet}
\caption{A double poset.}
\end{figure}
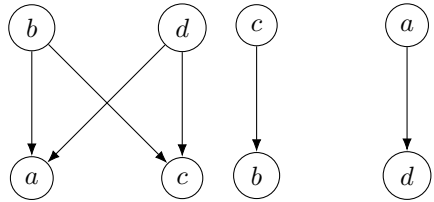

 Let $D$ be a double poset on a finite set $N$, and let $f: N \to \mathbb{N}$. Then $f$ is a $D$-partition if and only if it satisfies the following two properties:
\begin{enumerate}
    \item For $i \leq_1 j$ in $D$, we have $f(i) \leq f(j)$.
    \item For $i \leq_1 j$ and $j \leq_2 i$ in $D$, we have $f(i) < f(j)$.
\end{enumerate}

Let $P_D$ be the set of $D$-partitions.
We define the $D$-partition enumerator by
\begin{equation}
    \Omega_{D, \varphi}(D, \mathbf{x}) = \sum\limits_{f\in P_D} \prod_{v \in N} x_{f(v)}
    \label{eq:dpartition}
\end{equation}
This quasisymmetric function is studied extensively by Grinberg \cite{grinberg}.

Given a double poset $D$, a pair $(m,m') \in M$ is an \emph{inversion} if $m <_1 m'$ and $m' <_2 m$. Given a set composition $C \models N$, we say that $C$ is a $D$-set composition if it satisfies the following two properties:
\begin{enumerate}
    \item For every $i$, $C_1 \cup C_2 \cup \cdots \cup C_i$ is a $\leq_1$-order ideal.
    \item For every $i$, there are no inversions in $C_i$
\end{enumerate}
Let $X_{D}$ be the set of $D$-set compositions.
\begin{proposition}
Let $D$ be a double poset. Then $\Omega(D, \mathbf{x}) = \sum\limits_{C \in X_{D}} M_{\alpha(C)}.$
\label{prop:oldcoloring}
\end{proposition}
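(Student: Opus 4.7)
The plan is to establish a bijection between $D$-partitions and pairs consisting of a $D$-set composition together with a strictly increasing sequence of positive integers (one for each block), and then to group the monomial sum defining $\Omega(D,\mathbf{x})$ according to this bijection.

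First I would define the map $f \mapsto C(f)$ from $P_D$ to the set of all set compositions of $N$ as follows: let $v_1 < v_2 < \cdots < v_k$ be the distinct values taken by $f$, and set $C(f) = (f^{-1}(v_1),\ldots,f^{-1}(v_k))$. I then need to verify that $C(f)$ is actually a $D$-set composition. For the first defining property, if $y \leq_1 x$ and $x \in f^{-1}(v_1) \cup \cdots \cup f^{-1}(v_i)$, then $f(y) \leq f(x) \leq v_i$ by the first $D$-partition axiom, so $y$ is in the same union; hence each prefix is a $\leq_1$-order ideal. For the second property, if $m, m' \in f^{-1}(v_i)$ formed an inversion ($m <_1 m'$ and $m' <_2 m$), the second $D$-partition axiom would force $f(m) < f(m')$, contradicting $f(m) = f(m') = v_i$.

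For the inverse, given a $D$-set composition $C = (C_1,\ldots,C_k)$ and any choice of integers $v_1 < v_2 < \cdots < v_k$, I would define $f_{C,v}(x) = v_i$ for $x \in C_i$ and check both $D$-partition axioms. If $x \leq_1 y$ with $x \in C_i$ and $y \in C_j$, then since $C_1 \cup \cdots \cup C_j$ is a $\leq_1$-order ideal containing $y$, it must contain $x$, giving $i \leq j$ and thus $f(x) \leq f(y)$. If additionally $y <_2 x$ so $(x,y)$ is an inversion, then $i = j$ is ruled out by the no-inversions-in-blocks property, forcing $i < j$ and hence $f(x) < f(y)$. The two constructions are clearly mutually inverse, establishing the bijection $P_D \;\leftrightarrow\; \{(C, v_1 < \cdots < v_{\ell(C)}) : C \in X_D\}$.

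Finally I would use this bijection to rewrite the sum: for a fixed $D$-set composition $C$ of type $\alpha(C) = (\alpha_1,\ldots,\alpha_k)$, the monomial $\prod_{v \in N} x_{f_{C,v}(v)}$ equals $\prod_{j=1}^{k} x_{v_j}^{\alpha_j}$, so summing over all choices $v_1 < \cdots < v_k$ gives exactly $M_{\alpha(C)}$ by definition of the monomial quasisymmetric function. Grouping the sum defining $\Omega(D,\mathbf{x})$ according to $C(f)$ then yields
\[
\Omega(D,\mathbf{x}) = \sum_{C \in X_D} \sum_{v_1 < \cdots < v_{\ell(C)}} \prod_{j=1}^{\ell(C)} x_{v_j}^{\alpha_j(C)} = \sum_{C \in X_D} M_{\alpha(C)}.
\]
The only delicate step is the verification that the two axioms for $D$-set compositions correspond precisely to the two axioms for $D$-partitions under the equivalence; everything else is bookkeeping.
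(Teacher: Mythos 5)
Your proof is correct: the bijection $f \leftrightarrow (C(f), v_1 < \cdots < v_{\ell(C)})$ obtained by grouping a $D$-partition by its level sets, together with the check that the two $D$-partition axioms translate exactly into the order-ideal and no-inversion conditions on $C(f)$, is precisely the standard argument. The paper states this proposition without proof, but its proof of Theorem \ref{thm:coloring} runs the same $f \mapsto C(f)$ construction in the equivariant setting, so your approach matches the paper's in essence.
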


Given a double poset $D$, an automorphism is a bijection $\sigma: N \to N$ such that, for all $x, y \in N$ and all $i \in \{1,2\}$, we have $x \leq_i y$ if and only if $\sigma(x) \leq_i \sigma(y)$. We let $\Aut(D)$ be the automorphism group of $D$. For instance, for the double poset in Figure \ref{fig:doubleposet}, the permutation $(ac)(bd)$ is the only nontrivial automorphism. Similarly the only nontrivial automorphism of the double poset in Figure \ref{fig:specialposet} is the permutation $(a)(bd)(c)$.

Let $\mathfrak{G} \subseteq \Aut(D)$. For $\mathfrak{g} \in \mathfrak{G}$, define \[\Omega(D, \mathfrak{G}, \mathbf{x}; \mathfrak{g}) =  \sum\limits_{f \in \Fix_{\mathfrak{g}}(P_D)} \mathbf{x}^f. \]
This is the \emph{$D$-partition quasisymmetric class function}.


Naturally, there is an \emph{order polynomial class function} as well: given a positive integer $n$, we let $X_{n, D}$ be the set of $D$-partitions $\sigma: D \to [n]$. Then $\mathfrak{G}$ acts on $X_{n, D}$ and we let $\Omega(D, \mathfrak{G}, n)$ be the resulting character.

We give two alternative formulas for $\chievaluate$, and another formula for the order polynomial class function. Let $X_{\alpha, D}$ be the set of $D$-set compositions of type $\alpha$. Then $\mathfrak{G}$ acts on $X_{\alpha, D}$. Let $\chi_{\alpha, D}$ be the resulting character.
\begin{theorem}
Let $D$ be a double poset on a finite set $N$ and let  $\mathfrak{G} \subseteq \Aut(D)$.
Then we have the following identities:
\begin{enumerate} 
\item \[\chievaluate =  \sum\limits_{C \in \Fix_{\mathfrak{g}}(X_{D})} M_{\alpha(C)}  \]
\item \[\Omega(D, \mathfrak{G}, \mathbf{x}) = \sum\limits_{\alpha \models |N|} \chi_{\alpha, D} M_{\alpha} \]
\item \[\Omega(D, \mathfrak{G}, x) = \sum\limits_{\alpha \models |N|} \chi_{\alpha, D}(D, \mathfrak{G}) \binom{x}{|\alpha|}.\]
\end{enumerate}

\label{thm:coloring}
\end{theorem}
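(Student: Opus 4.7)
The plan is to prove part (1) directly by making the bijection behind Proposition \ref{prop:oldcoloring} equivariant, then obtain parts (2) and (3) as formal consequences, with (3) coming from the principal specialization machinery in Proposition \ref{prop:global2}\ref{prop:ftohorbit} (well, the $M \to \binom{x}{\ell}$ part).

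For part (1), I would recall the standard bijection underlying Proposition \ref{prop:oldcoloring}: every function $f : N \to \mathbb{N}$ is determined by the ordered partition of $N$ coming from its fibers together with the strictly increasing sequence $i_1 < i_2 < \cdots < i_k$ of values that $f$ takes. Under this correspondence, $f$ is a $D$-partition if and only if the associated set composition $C(f) = f^{-1}(i_1) \mid f^{-1}(i_2) \mid \cdots \mid f^{-1}(i_k)$ is a $D$-set composition, because the conditions ``$u \le_1 v \Rightarrow f(u) \le f(v)$'' and ``$u \le_1 v,\ v \le_2 u \Rightarrow f(u) < f(v)$'' translate exactly into the two defining properties of a $D$-set composition. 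Hence
\[
    \sum_{f \in P_D} \mathbf{x}^f \;=\; \sum_{C \in X_D} \sum_{i_1 < \cdots < i_{\ell(C)}} \prod_{j=1}^{\ell(C)} x_{i_j}^{|C_j|} \;=\; \sum_{C \in X_D} M_{\alpha(C)}.
\]

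The key new step is verifying that this bijection is $\mathfrak{G}$-equivariant when $\mathfrak{G}$ acts on functions by $(\mathfrak{g} \cdot f)(v) = f(\mathfrak{g}^{-1} v)$ and on set compositions by $\mathfrak{g} \cdot C = \mathfrak{g}(C_1)\mid\cdots\mid\mathfrak{g}(C_k)$. Under the relabeling $v \mapsto \mathfrak{g}^{-1}v$ the set of values attained by $f$ is unchanged, and the fiber over $i_j$ becomes $\mathfrak{g}(f^{-1}(i_j))$; thus $C(\mathfrak{g} \cdot f) = \mathfrak{g} \cdot C(f)$ while the increasing sequence $(i_1,\ldots,i_k)$ is unchanged. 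Consequently $\mathfrak{g} \cdot f = f$ if and only if $\mathfrak{g} \cdot C(f) = C(f)$, and the bijection restricts to one between $\Fix_{\mathfrak{g}}(P_D)$ and pairs consisting of a set composition in $\Fix_{\mathfrak{g}}(X_D)$ together with an arbitrary strictly increasing sequence of length $\ell(C)$. Summing as above yields part (1).

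Part (2) follows by grouping the sum in part (1) according to $\alpha = \alpha(C)$: the coefficient of $M_\alpha$ when evaluated at $\mathfrak{g}$ is $|\Fix_{\mathfrak{g}}(X_{\alpha,D})|$, which is precisely the value at $\mathfrak{g}$ of the permutation character $\chi_{\alpha,D}$ of the $\mathfrak{G}$-action on $X_{\alpha,D}$. For part (3), the order polynomial class function can be written as
\[
    \Omega(D,\mathfrak{G},n)(\mathfrak{g}) \;=\; |\Fix_{\mathfrak{g}}(X_{n,D})| \;=\; \ps\bigl(\Omega(D,\mathfrak{G},\mathbf{x};\mathfrak{g})\bigr)(n),
\]
since substituting $x_i = 1$ for $i \le n$ and $x_i = 0$ otherwise in $\mathbf{x}^f$ produces $1$ exactly when $f(N) \subseteq [n]$. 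Applying Proposition \ref{prop:global2}(1) to the expansion from part (2) then gives $\Omega(D,\mathfrak{G},x) = \sum_{\alpha \models |N|} \chi_{\alpha,D}\binom{x}{\ell(\alpha)}$, which is the asserted identity.

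The only nontrivial point is the equivariance check in the second paragraph; once the action on the fiber data is correctly identified (the $\mathfrak{G}$-action moves the blocks but not the values), everything else is formal bookkeeping and an appeal to the principal specialization proposition.
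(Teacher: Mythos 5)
Your argument for parts (1) and (2) is correct and essentially the paper's own: the same fiber map $f \mapsto C(f)$, the same observation that $\mathfrak{g}f=f$ exactly when $\mathfrak{g}C(f)=C(f)$ (your explicit equivariance check $C(\mathfrak{g}\cdot f)=\mathfrak{g}\cdot C(f)$ is in fact stated more carefully than in the paper), and the same grouping by type to get the $\chi_{\alpha,D}$ expansion. The only divergence is part (3): the paper proves it by exhibiting an isomorphism of $\mathfrak{G}$-sets $X_{n,D}\simeq \bigcup_{\alpha\models |N|}\binom{[n]}{\ell(\alpha)}\times X_{\alpha,D}$ (with $\mathfrak{G}$ acting trivially on the first factor) and then taking traces, whereas you observe that $\Omega(D,\mathfrak{G},n;\mathfrak{g})=|\Fix_{\mathfrak{g}}(X_{n,D})|$ is the principal specialization of $\Omega(D,\mathfrak{G},\mathbf{x};\mathfrak{g})$ and invoke Proposition \ref{prop:global2}(1) applied to the expansion from part (2). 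Both routes are valid and rest on the same count; your version buys brevity by outsourcing the combinatorics to the specialization machinery, while the paper's version makes the underlying bijection of $\mathfrak{G}$-sets explicit, which is slightly more self-contained. Note also that the $\binom{x}{|\alpha|}$ in the statement should be read as $\binom{x}{\ell(\alpha)}$, which is what both you and the paper's proof actually establish.
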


\begin{figure}
\begin{center}
\begin{tabular}{cc}
\begin{tikzpicture}
\node[circle,draw=black] (A) at (0,1) {a};
\node[circle,draw=black ] (B) at (1,0) {b};
\node[circle,draw=black] (C) at (0,-1) {c};
\node[circle,draw=black] (D) at (-1,0) {d};

\draw[-Latex] (A) -- (B);
\draw[-Latex] (B) -- (C);
\draw[-Latex] (D) -- (C);
\draw[-Latex] (A) -- (D);
\end{tikzpicture}
&
\begin{tikzpicture}
  \node[circle, draw=black, fill=white] (b) at (0,2) {$b$};
  \node[circle, draw=black, fill=white] (a) at (0,0) {$a$};
  \node[circle, draw=black, fill=white] (c) at (2,0) {$c$};
  \node[circle, draw=black, fill=white] (d) at (2,2) {$d$};
 \draw[-Latex] (b) -- (a);
  \draw[-Latex] (d) edge (c);
  \draw[-Latex] (b) -- (c);
    \draw[-Latex] (d) -- (a);

\end{tikzpicture}
\end{tabular}
\end{center}
\label{fig:specialposet}
\caption{A double poset.}
\end{figure}
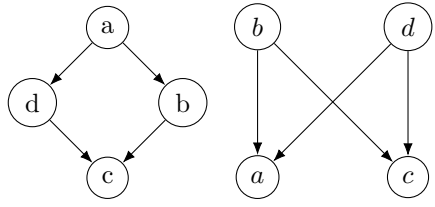

\begin{proof}
Fix a double poset $D$ on a finite set $N.$

For the first formula, let $f \in \Fix_{\mathfrak{g}}(P_D)$. Let $i_1 < i_2 < \cdots < i_k$ be the natural numbers for which $f^{-1}(i_j) \neq \emptyset$. Define $C(f) = f^{-1}(i_1)|f^{-1}(i_2)|\cdots|f^{-1}(i_k)$. This is the composition associated with $f$. We see that 
\[M_{\alpha(C)} = \sum\limits_{f \in \Fix_{\mathfrak{g}}(P_D)} \mathbf{x}^f \]
and that $\mathfrak{g}f = f$ if and only if $\mathfrak{g}C(f) = C(f)$. 
Thus we obtain
\[ \sum\limits_{C \in \Fix_{\mathfrak{g}}(X_D)} M_{\alpha(C)} = \sum\limits_{C \in \Fix_{\mathfrak{g}}(X_{D})} \sum\limits_{f \in \Fix_{\mathfrak{g}}(P_D)} \mathbf{x}^f =  \sum\limits_{f \in \Fix_{\mathfrak{g}}(P_D)} \mathbf{x}^f.\]

Given $\mathfrak{g}$, we have 
\[ \sum\limits_{C \in \Fix_{\mathfrak{g}}(X_{D})} M_{\alpha(C)} = \sum\limits_{\alpha \models |N|} \sum\limits_{C \in \Fix_{\mathfrak{g}}(X_{\alpha, D})} M_{\alpha} = \sum\limits_{\alpha \models |N|} \chi_{\alpha, D}(\mathfrak{g}) M_{\alpha}\]
and so by the first formula, we conclude that $\Omega(D, \mathfrak{G}, \mathbf{x}) = \sum\limits_{\alpha \models |N|} \chi_{\alpha, D} M_{\alpha}$.

For the third formula, let $\mathfrak{G}$ act trivially on $\binom{[n]}{k}$, the collection of $k$-subsets of $[n]$. Then for any integer composition $\alpha \models |N|$, we have an action of $\mathfrak{G}$ on $\binom{[n]}{\ell(\alpha)} \times X_{\alpha, D}$. Given a $D$-partition $f: D \to [n]$, let $\varphi(f) = (f(N), C(f))$. Then this defines an isomorphism of $\mathfrak{G}$-sets:
\[ X_{n, D} \simeq \bigcup_{\alpha \models |N|} \binom{[n]}{\ell(\alpha)} \times X_{\alpha, D} \]
The result follows from taking linear spans to obtain $\mathfrak{G}$-modules, and then taking the trace to obtain characters.
\end{proof}

As an example, consider the double poset $D$ in Figure \ref{fig:doubleposet}, and let $\mathfrak{G} = \Aut(D)$. Let $\rho$ denote the regular representation. Then \begin{align*} \Omega(D, \mathfrak{G}, \mathbf{x}) & = M_{2,2}+\rho(M_{1,1,2}+M_{1,1,1,1}) \\ 
& = F_{2,2}+\sgn (F_{1,1,2}+F_{2,1,1}-F_{1,1,1,1})+\rho F_{1,2,1}.
\end{align*}

As another example, consider the double poset $D$ in Figure \ref{fig:specialposet}, and let $\mathfrak{G} = \Aut(D)$. Let $\rho$ denote the regular representation. Then \begin{align*} \Omega(D, \mathfrak{G}, \mathbf{x}) & = M_{1,3}+M_{1,2,1}+\rho(M_{1,1,2}+M_{1,1,1,1}) \\ 
& = F_{1,3}+\sgn F_{1,1,2}.
\end{align*}

\subsection{Properties of Double Posets}

A double poset is \emph{locally special} if whenever $y$ $\leq_1$-covers $x$, then $x$ and $y$ are $\leq_2$-comparable. The double poset in Figure \ref{fig:specialposet} is locally special, while the double poset in Figure \ref{fig:doubleposet} is not.
Grinberg gives several examples of locally special posets, including double posets coming from skew shapes and labeled posets.

We say that an inversion pair $(x,y)$ is a \emph{descent pair} if $x \prec_1 y$.
\begin{lemma}
Let $D$ be a locally special double poset, and let $I \subseteq J$ be $\leq_1$-order ideals. If $D$ has an inversion pair $(x,y)$ with $x,y \in J \setminus I$, then $D$ has a descent pair $(w,z)$ with $w, z \in J \setminus I$.
\label{lem:inversion-to-descent}
\end{lemma}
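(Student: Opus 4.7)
The plan is to exhibit the descent pair on a saturated $\leq_1$-chain from $x$ to $y$. Since $(x,y)$ is an inversion pair, $x \prec_1 \cdots \prec_1 y$ can be refined to a saturated chain $x = x_0 \prec_1 x_1 \prec_1 \cdots \prec_1 x_k = y$ in the first order. First I would check that every $x_i$ sits inside $J \setminus I$. Since $J$ is a $\leq_1$-order ideal containing $y$, every $x_i \leq_1 y$ lies in $J$; conversely, if some $x_i$ were in $I$, then because $I$ is a $\leq_1$-order ideal and $x \leq_1 x_i$, we would force $x \in I$, contradicting $x \in J \setminus I$. Hence all elements of the chain lie in $J \setminus I$.

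Next I would apply the locally special hypothesis at each covering relation: for every $i$, since $x_{i+1}$ $\leq_1$-covers $x_i$, the elements $x_i$ and $x_{i+1}$ are $\leq_2$-comparable, and being distinct this means either $x_i <_2 x_{i+1}$ or $x_{i+1} <_2 x_i$. If the latter holds for some index $i$, then together with $x_i \prec_1 x_{i+1}$ the pair $(x_i,x_{i+1})$ is a descent pair entirely contained in $J \setminus I$, and we are done.

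The remaining case is that $x_i <_2 x_{i+1}$ for all $i$. Then transitivity of $\leq_2$ yields $x = x_0 <_2 x_k = y$, directly contradicting the assumption $y <_2 x$ coming from $(x,y)$ being an inversion pair. So this case cannot occur, and the previous paragraph supplies the required descent pair. There is no real obstacle here; the only point to be careful about is verifying that the interior vertices of the chain remain in $J\setminus I$, which is immediate from the order-ideal hypotheses.
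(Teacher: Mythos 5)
Your proof is correct and is essentially the paper's argument in unrolled form: the paper inducts on $|J\setminus I|$, applying the locally special condition at one $\leq_1$-cover at a time along a chain from $x$ toward $y$, while you take the whole saturated chain at once, place it in $J\setminus I$ via the order-ideal hypotheses, and derive the same contradiction ($x <_2 y$ versus $y <_2 x$) if no cover gives a descent. The verification that the chain stays in $J\setminus I$ is the right point to make explicit, and your direct phrasing is if anything cleaner than the paper's inductive one.
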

\begin{proof}
We prove the result by induction on $|J \setminus I|$.
Let $x \prec_1 t \leq_1 z$. Since $D$ is locally special, we have $x \leq_2 t$ or $t \leq_2 x$. In the latter case, we have found a descent pair $(x,t)$. In the former case the pair $(t,y)$ forms an inversion pair. We observe that then interval $[t,y]$ is equal to $(y) \setminus (t)$, where $(a)$ is the principal order ideal generated by $a$. Since $|[t,y]| < |J \setminus I|$, by induction there is a descent pair $(w,z)$ in $[t,y]$, and we have $x \leq_1 \prec_1 z \leq y$.
\end{proof}

For any set $S \subseteq N$, and a partial order $P$ on $N$, we let $P|_S$ denote the induced poset on $S$. The same notation is also used for linear orders (which are a special case of partial orders), and for double posets.

We say that a linear order $\ell$ of $N$ is $D$-compatible if only if for all pairs $(I,J)$ of $\leq_1$-order ideals with $I \subseteq J$, the linear order $\ell|_{J \setminus I}$ is a $\leq_1$-linear extension of $D|_{J \setminus I}$ if and only if $D|_{J \setminus I}$ contains no inversion pairs.
\begin{lemma}
Let $D$ be a double poset on a finite set $N$. If $D$ is locally special, then there exists a $D$-compatible linear order.
\label{lem:compatible}
\end{lemma}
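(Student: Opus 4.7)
The plan is to build $\ell$ as a topological sort of an auxiliary digraph on $N$. Define $G(D)$ by orienting each $\leq_1$-cover $x \prec_1 y$ as follows: if $(x,y)$ is a descent pair (so $y \leq_2 x$), add the arc $y \to x$; otherwise, local speciality forces $x <_2 y$, and we add the arc $x \to y$. The crucial observation is that every arc $a \to b$ of $G(D)$ satisfies $a <_2 b$. Consequently a directed cycle $v_0 \to v_1 \to \cdots \to v_m = v_0$ would chain together into $v_0 <_2 v_1 <_2 \cdots <_2 v_0$, contradicting antisymmetry of $\leq_2$. Hence $G(D)$ is acyclic and admits a topological order $\ell$, and the claim is that any such $\ell$ is $D$-compatible.

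Fix $\leq_1$-order ideals $I \subseteq J$. A useful preliminary observation is that any saturated $\leq_1$-chain in $D$ connecting two elements of $J \setminus I$ stays inside $J \setminus I$: an intermediate element $z$ satisfies $z \leq_1 y \in J$, so $z \in J$; and $z \in I$ together with $x \leq_1 z$ would force $x \in I$, a contradiction. This lets us replace reasoning about covers in the induced poset $D|_{J\setminus I}$ (which need not be covers in $D$) with chains of covers in $D$ that remain in $J \setminus I$.

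Suppose first that $D|_{J \setminus I}$ contains no inversion pair. Then for any $\leq_1$-cover $u \prec_1 v$ in $D$ with $u, v \in J \setminus I$, the pair $(u,v)$ cannot itself be a descent pair, as that would be an inversion in $J \setminus I$. Hence $G(D)$ has the arc $u \to v$ and $u <_\ell v$. Composing along saturated chains in $D$ yields $x <_\ell y$ for every $x <_1 y$ in $J \setminus I$, so $\ell|_{J \setminus I}$ is a $\leq_1$-linear extension of $D|_{J \setminus I}$. In the opposite case where $D|_{J\setminus I}$ admits an inversion, Lemma \ref{lem:inversion-to-descent} supplies a descent pair $(w,z)$ with $w, z \in J \setminus I$ and $w \prec_1 z$; the arc $z \to w$ of $G(D)$ then forces $z <_\ell w$ while $w <_1 z$, so $\ell|_{J \setminus I}$ fails to extend $\leq_1|_{J \setminus I}$.

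The only real obstacle is the Case 1 argument: one has to avoid the temptation to use covers of the induced poset $D|_{J\setminus I}$ directly, since these may not correspond to covers in $D$ and hence need not carry arcs of $G(D)$. Routing the argument through saturated chains in $D$ (which by the order-ideal observation never leave $J \setminus I$) sidesteps this cleanly, and the rest of the proof is immediate once the acyclicity observation $a \to b \Rightarrow a <_2 b$ is made.
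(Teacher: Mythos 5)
Your proof is correct and takes essentially the same approach as the paper: form the auxiliary digraph by flipping the descent covers of the $\leq_1$-Hasse diagram, use local speciality to see every arc increases in $\leq_2$ (hence acyclicity), take a linear (topological) extension, and invoke Lemma \ref{lem:inversion-to-descent} for the inversion direction. Your saturated-chain argument inside $J \setminus I$ is just a more explicit justification of the step the paper summarizes as $P|_{J \setminus I} = D|_{J \setminus I}$.
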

\begin{proof}
Let $D$ be a locally special double poset. Let $G(D)$ be the directed graph obtained by taking the directed edges of the Hasse diagram of $\leq_1$, and reversing the direction on edges $x \prec_1 y$ if $x >_2 y$. We claim that $G(D)$ is acyclic.
Suppose that we have a directed cycle $C$ in $G(D)$. Let $C$ have vertices $x_0, x_1, \ldots, x_k$ in order. Note that this means that $x_i \prec_1 x_{i+1}$ or $x_{i+1} \prec_1 x_i$ for all $i$. Since $D$ is locally special, we have $x_i \leq_2 x_{i+1}$ for all $i$, which is a contradiction. Thus there is no directed cycle.

We say $x \leq_{P} y$ if there is a directed path from $y$ to $x$ in $G(D).$ Let $\ell$ be a linear extension of $P$. We claim that $\ell$ is $D$-compatible.

Let $I \subseteq J$ be $\leq_1$-order ideals. Suppose that there are no inversions in $J \setminus I$. Then we see that $P|_{J \setminus I} = D|_{J \setminus I}$, and thus $\ell|_{J \setminus I}$ is a linear extension of $D|_{J \setminus I}$. Suppose instead there is an inversion pair $(x,y)$ in $J \setminus I$. By Lemma \ref{lem:inversion-to-descent}, we can choose $(x,y)$ to be a descent pair. Since $x \geq_2 y$, we have $(x,y)$ is a directed edge in $G(D)$, and thus $x \geq_{P} y$. Since $\ell$ is a linear extension of $P$, we have $x >_{\ell} y$. Therefore $\ell|_{J \setminus I}$ is not a linear extension of $D|_{J \setminus I}$.

\end{proof}
Given a $D$-compatible linear order $\ell$, we can lexicographically order any other linear order of $N$: given two linear orders $\pi$ and $\sigma$, consider the first $i$ where $\pi_i \neq \sigma_i$. Then we say $\pi <_{\ell} \sigma$ if $\pi_i <_{\ell} \sigma$. 
Let $C$ be a $D$-set composition for $D$. We let $\ell(C)$ be the lexicographically first total refinement of $C$. 
Finally, we can also totally preorder $X_{\alpha, D}$, the set of $D$-set compositions of type $\alpha$. Given $C, C' \in X_{\alpha, D}$, we say $C \leq_{\ell} C'$ if and only if $\ell(C) \leq_{\ell} \ell(C')$. Note that it is a preorder because it is possible for $\ell(C) = \ell(C')$.

We see that a $D$-set composition with only singleton blocks is a $\leq_1$-linear extension. We prove a proposition regarding when such linear extensions are increasing with respect to $\ell$.
We say that a $\leq_1$-linear extension $\pi$ is \emph{increasing} if $\pi_1 <_{\ell} \pi_2 <_{\ell} \cdots <_{\ell} \pi_n$. 
\begin{proposition}
Let $D$ be a locally special double poset, and let $\ell$ be a $D$-compatible linear order.
Let $I \subseteq J$ be $\leq_1$-order ideals of $D$. Then $D|_{J \setminus I}$ has an increasing $\leq_1$-linear extension $\sigma$ if and only if $D$ has no inversions in $J \setminus I$. In that case, $\ell|_{J \setminus I} = \sigma$, and $\sigma$ is lexicographically least.
\label{prop:increasing}
\end{proposition}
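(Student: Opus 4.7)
The plan is to split the proof into three parts: (a) the forward direction, namely that an increasing $\leq_1$-linear extension of $D|_{J\setminus I}$ forces $D|_{J\setminus I}$ to have no inversion pair; (b) the backward direction, where I produce the witness $\sigma$; and (c) the identification $\sigma=\ell|_{J\setminus I}$ together with the lex-least claim. The main inputs will be Lemma \ref{lem:inversion-to-descent}, the explicit construction of $\ell$ given in the proof of Lemma \ref{lem:compatible}, and the definition of $D$-compatibility itself.

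For (a), I would argue by contradiction. Suppose $\sigma$ is an increasing $\leq_1$-linear extension of $D|_{J\setminus I}$ but that $D|_{J\setminus I}$ contains an inversion pair. Using Lemma \ref{lem:inversion-to-descent}, I upgrade this to a descent pair $(w,z)$ with $w,z\in J\setminus I$; by definition $w\prec_1 z$ in $D$ and $z<_2 w$. Since $(w,z)$ is a descent cover, the corresponding edge of $G(D)$ in the proof of Lemma \ref{lem:compatible} is the reversed one, giving $z\leq_P w$ in the auxiliary partial order $P$, and hence $z<_\ell w$ because $\ell$ is a linear extension of $P$. On the other hand, $w<_1 z$ in $D|_{J\setminus I}$, so $\sigma$ places $w$ before $z$; since $\sigma$ is $\ell$-increasing, this forces $w<_\ell z$, a contradiction.

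For (b), assume $D|_{J\setminus I}$ has no inversion. By the definition of $D$-compatibility, $\ell|_{J\setminus I}$ is a $\leq_1$-linear extension of $D|_{J\setminus I}$, and it is $\ell$-increasing by construction, so I take $\sigma := \ell|_{J\setminus I}$. For (c), any increasing $\leq_1$-linear extension of $D|_{J\setminus I}$ enumerates the elements of $J\setminus I$ in $\ell$-increasing order by the definition of increasing, and that enumeration is precisely $\ell|_{J\setminus I}$; hence $\sigma=\ell|_{J\setminus I}$ is the unique such extension. The lexicographic leastness is then automatic, because $\ell|_{J\setminus I}$ is the lex-least linear ordering of $J\setminus I$ altogether (it lists that subset in $\ell$-order).

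I do not anticipate a serious obstacle: the one place to be careful is matching the edge-reversal convention in $G(D)$ with the resulting inequality in $\ell$, so that descent covers are exactly the $\leq_1$-covers on which $\ell$ reverses the order relative to $\leq_1$. Once that bookkeeping is in place, both directions fall out from the auxiliary order $P$ and the definition of $D$-compatibility, and the final identification with $\ell|_{J\setminus I}$ is essentially tautological.
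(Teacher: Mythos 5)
Your parts (b) and (c) are fine, and in fact cleaner than the paper's treatment: once you observe that an $\ell$-increasing enumeration of the finite set $J\setminus I$ is necessarily the sorted list $\ell|_{J\setminus I}$, both the identification $\sigma=\ell|_{J\setminus I}$ and the lexicographic minimality are immediate, whereas the paper establishes this uniqueness by an induction on $|J\setminus I|$.

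The gap is in (a). The proposition is stated for an \emph{arbitrary} $D$-compatible linear order $\ell$, but your contradiction hinges on the claim that $\ell$ is a linear extension of the auxiliary order $P$ built in the proof of Lemma \ref{lem:compatible}. That proof only establishes one implication (every linear extension of $P$ is $D$-compatible), so you have no license to assume the given $\ell$ arises that way, and the step ``$z\leq_P w$, hence $z<_\ell w$'' is unjustified as written. The gap can be closed in two ways. First, you can deduce from $D$-compatibility itself that $\ell$ reverses every descent cover: for a cover $w\prec_1 z$, take $J'$ to be the principal $\leq_1$-order ideal generated by $z$ and $I'=J'\setminus\{w,z\}$; since $w\prec_1 z$ is a cover, $I'$ is again a $\leq_1$-order ideal, $J'\setminus I'=\{w,z\}$ contains the inversion $(w,z)$, and compatibility then forces $\ell|_{\{w,z\}}$ not to be a $\leq_1$-linear extension, i.e.\ $z<_\ell w$. (This also shows that every $D$-compatible order extends $P$, which rescues your argument wholesale.) Second, and more simply --- this is essentially the paper's route --- you can drop Lemma \ref{lem:inversion-to-descent} and $P$ entirely: by your own observation in (c), any increasing $\leq_1$-linear extension of $D|_{J\setminus I}$ must equal $\ell|_{J\setminus I}$, so $\ell|_{J\setminus I}$ is a $\leq_1$-linear extension of $D|_{J\setminus I}$, and the definition of $D$-compatibility then says directly that $J\setminus I$ contains no inversions.
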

\begin{proof}
We prove the result by induction on $k = |J \setminus I|.$ Suppose that $D|_{J \setminus I}$ has an increasing $\leq_1$-linear extension $\sigma$. Then $D|_{J \setminus (I \cup \{\sigma_1 \})}$ also has a $\leq_1$-increasing linear extension. By induction, we see that $\sigma|_{J \setminus \{\sigma_1 \}} = \ell|_{J \setminus I \cup \{\sigma_1 \}}$. Similarly, if we let $J' = J \setminus \{\sigma_k \}$ and we see that $D|_{J' \setminus I}$ has an increasing $\leq_1$-linear extension, and thus $\sigma|_{\{\sigma_1, \ldots, \sigma_{k-1} \}} = \ell|_{J' \setminus I}$. Therefore, we have $\ell|_{J \setminus I} = \sigma$. Hence $\ell|_{J \setminus I}$ is a linear extension of $D|_{J \setminus I}$, and by definition of $D$-compatible order, this means that $J \setminus I$ does not contain any inversions.

Now we suppose that $J \setminus I$ has no inversions. Then $\ell$ restricted to $J \setminus I$ is a linear extension of $D|_{J \setminus I}$ with respect to $\leq_1$. Moreover, $\ell|_{J \setminus I}$ is increasing.  

Let $\sigma = \ell|_{J \setminus I}$.
Let $\tau$ be another increasing $\leq_1$-linear extension. Suppose $\tau_1 \neq \sigma_1$. Then $\tau_k = \sigma_1$ for some $k > 1$. However, then $\tau_1 >_{\ell} \tau_k$, and hence $\tau$ is not increasing. Thus $\tau_1 = \sigma_1$. By induction, we have $\tau|_{J \setminus \{\tau_1 \}}$ and $\sigma|_{J \setminus \{\sigma_1 \}}$ are both increasing $\leq_1$-linear extensions of $D|_{J \setminus (I \cup \{\tau_1\})}$, and hence are equal by induction. Thus $\sigma = \tau$.

\end{proof}

\section{Digraph coloring}
\label{sec:digraph}

We refer to directed graphs as digraphs. We require that there is at most one directed edge between any two vertices. An example appears in Figure \ref{fig:digraph}.

\begin{figure}
\begin{center}
\begin{tikzpicture}
\node[circle,draw=black] (A) at (0,2) {A};
\node[circle,draw=black ] (B) at (2,0) {B};
\node[circle,draw=black] (C) at (0,-2) {C};
\node[circle,draw=black] (D) at (-2,0) {D};

\draw[-Latex,thick] (A) -- (B);
\draw[-Latex, thick] (B) -- (C);
\draw[-Latex, thick] (C) -- (D);
\draw[-Latex, thick] (D) -- (A);
\end{tikzpicture}
\end{center}
\caption{A digraph}
\label{fig:digraph}
\end{figure}
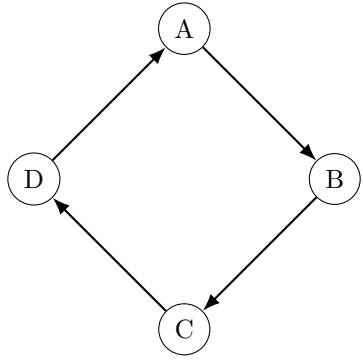
Given a digraph $G$ on $N$, a \emph{coloring} is a function $f: N \to \mathbb{N}$ which satisfies:
\begin{enumerate}
\item for every edge $(u,v)$, $f(u) \neq f(v)$.
\end{enumerate}
An edge $(u,v)$ is an $f$-ascent if $f(u) < f(v)$. We let $\asc(f)$ denote the number of $f$-descents.
An example of a coloring in Figure \ref{fig:digraph} is given by $f(A) = 1$, $f(B) = 2$, $f(C) = 3$, and $f(D) = 4$. This coloring has exactly one ascent, from $D$ to $A$. Let $C_G$ denote the set of all colorings, and we let $C_{n, G} = \{f \in C_G: f(N) \subseteq [n] \}.$
Finally, we let $C_{k, n, G} = \{f \in C_{n, G}: \asc(f) = k \}$.
\begin{definition}
The \emph{chromatic quasisymmetric function} is \[\genq{\chi}{}{G}{} = \sum\limits_{f \in C_G} t^{\asc(f)} \mathbf{x}^f.\] 

Likewise, for $n \in \mathbb{N}$, define the \emph{chromatic polynomial} to be \[\genpoly{\chi}{}{G}{n} = \sum\limits_{f \in C_{n,G}} t^{\asc(f)},\] where we sum over all proper colorings $f: I \to [n]$.
\end{definition}

For example, for the digraph $G$ in Figure \ref{fig:digraph}, we have $\chi(G, \mathbf{x}) = 2t^2M_{2,2}+4t^2M_{2,1,1}+4t^2M_{1,2,1}+4t^2M_{1,1,2}+(4t^3+16t^2+4t)M_{1,1,1,1}.$ For instance, $2M_{2,2}$ comes from colorings $f$ where $ f(A) = f(C)$ and $f(B) = f(D)$. In all such cases, there ends up being two ascents.

Now we define the automorphism group of a digraph. Given a digraph $G$ on a finite set $N$, a bijection $\mathfrak{g}: N \to N$ is an \emph{automorphism} if for every $u, v \in N$, we have $(u,v) \in E(G)$ if and only if $(\mathfrak{g}(u), \mathfrak{g}(v)) \in E(G)$.
Let $\Aut(G)$ be the set of automorphisms of $G$, which forms a group. For the digraph $G$ appearing in Figure \ref{fig:digraph}, the automorphism group is isomorphic to $C_4$, the cyclic group of order $4$, acting by rotations. 

Now we define the chromatic quasisymmetric class function. 
Let $\mathfrak{G} \subset \Aut(G)$. For $\mathfrak{g} \in \mathfrak{G}$, we let \[\chi(G, \mathfrak{G}, \mathbf{x}; \mathfrak{g}) = \sum\limits_{f \in \Fix_{\mathfrak{g}}(C_G)} t^{\asc(f)} \mathbf{x}^f\]
where the sum is over all proper colorings of $G$.
This defines a class function on $\mathfrak{G}$ whose values are quasisymmetric functions over $\mathbb{Q}[t]$. 
This is the \emph{chromatic quasisymmetric class function} associated to $G$.

As an example, consider the digraph in Figure \ref{fig:digraph}, and let $\mathfrak{G} = \mathbb{Z}/4\mathbb{Z}$ act via rotation. Let $\rho$ denote the regular representation. Then \[\chi(G, \mathfrak{G}, t, \mathbf{x}) = (1+\sgn)t^2 M_{2,2} + \rho t(tM_{2,1,1}+tM_{1,2,1}+tM_{1,1,2}) + (1+4t+t^2)M_{1,1,1,1}.\]

We let $\chi_i:\mathbb{Z}/4\mathbb{Z} \to \mathbb{C}$ be given by $\chi_i(j) = i^j$, and $\chi_{-i}:\mathbb{Z}/4\mathbb{Z} \to \mathbb{C} $ be given by $\chi_{-i}(j) = (-i)^j$. Then 
\begin{align*}\chi(G, \mathfrak{G}, t, \mathbf{x}) & = (1+\sgn)t^2 (F_{2,2}+3F_{1,1,1,1}) + t^2(\chi_i+\chi_{-i})(F_{2,1,1}+F_{1,1,2}) \\ & + \rho(tF_{1,1,1,1}+t^2F_{1,2,1}+t^3F_{1,1,1,1}). \end{align*}

We detail some formulas relating the chromatic quasisymmetric class function of a digraph $G$ to $D$-partition quasisymmetric class functions. The key concept for proving identities is an acyclic orientation.
For a directed graph $G$, an acyclic orientation is another digraph $O$ on the same vertex set, with no directed cycles, such that, for every $u, v \in N$, we have $(u,v) \in G$ if and only if $(u,v) \in E(O)$ or $(v,u) \in E(O)$. An $O$-ascent is an edge $(u,v) \in E(D)$ where $(u,v) \in E(O)$. We let $\asc(O)$ be the number of $O$-ascents. We let $\mathcal{A}(G)$ be the set of acyclic orientations. Given an acyclic orientation $O$, there is a natural double poset associated with $O$: for $x, y \in N$, we say $x \leq_1 y$ if and only if there is a directed path from $y$ to $x$ in $O$. Then we define $x \leq_2 y$ if and only if $y \leq_1 x$.

\begin{lemma}
Let $G$ be a directed graph and let $\mathfrak{G} \subseteq \Aut(G)$. 
\begin{enumerate}
\item For $\mathfrak{g} \in \mathfrak{G}$, we have \[ \chi(G, \mathfrak{G}, t, \mathbf{x}; \mathfrak{g}) = \sum\limits_{O \in \Fix_{\mathfrak{g}}(\mathcal{A}(G))} t^{\asc(O)} \Omega(P_O, \mathfrak{G}_{O}, \mathbf{x}; \mathfrak{g}).\]
\item We have \[ \chi(G, \mathfrak{G}, t, \mathbf{x}) = \sum\limits_{O \in \mathcal{A}(G)} \frac{t^{\asc(O)}}{|\mathfrak{G}(O)|}  \Omega(P_O, \mathfrak{G}_{O}, \mathbf{x})\uparrow_{\mathfrak{G}_{O}}^{\mathfrak{G}}. \]

\item Let $\mathcal{T}$ be a transversal for the group action of $\mathfrak{G}$ on $A(G)$. We have \[ \chi(G, \mathfrak{G}, \mathbf{x}) = \sum\limits_{O \in \mathcal{T}} t^{\asc(O)} \Omega(P_O, \mathfrak{G}_{O}, \mathbf{x})\uparrow_{\mathfrak{G}_{O}}^{\mathfrak{G}}. \]

\end{enumerate}
\label{lem:fundamental2}
\end{lemma}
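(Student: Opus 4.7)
The plan is to prove (1) by partitioning proper colorings according to the acyclic orientation they induce, and then to deduce (2) and (3) by unpacking the induced class function formula together with orbit--stabilizer.

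For (1), I would associate to each proper coloring $f \in C_G$ the acyclic orientation $O(f)$ obtained by orienting every underlying edge so that $f$ is strictly monotone along it. Standard arguments then identify $f \mapsto (O(f), f)$ with a bijection $C_G \simeq \bigsqcup_{O \in \mathcal{A}(G)} P_O$, and a direct comparison of definitions gives $\asc(f) = \asc(O(f))$. Since any $\mathfrak{g} \in \Aut(G)$ commutes with the orientation assignment, $\mathfrak{g}f = f$ forces $\mathfrak{g}O(f) = O(f)$, so the defining sum for $\chi(G, \mathfrak{G}, t, \mathbf{x}; \mathfrak{g})$ splits along $\mathfrak{g}$-fixed orientations, and on each such $O$ the fiber is $\Fix_\mathfrak{g}(P_O)$. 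Assembling these gives (1).

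For (2), I would expand the right-hand side using the definition of the induced class function and interchange sums. Applying orbit--stabilizer $|\mathfrak{G}_O|\cdot|\mathfrak{G}(O)| = |\mathfrak{G}|$ converts the prefactor to $1/|\mathfrak{G}|$. Reindexing $O = \mathfrak{k}O'$, combined with the $\mathfrak{G}$-invariance of $\asc$ and the natural identification
\[
\Omega(P_{\mathfrak{k}O'}, \mathfrak{G}_{\mathfrak{k}O'}, \mathbf{x}; \mathfrak{k}\mathfrak{g}\mathfrak{k}^{-1}) = \Omega(P_{O'}, \mathfrak{G}_{O'}, \mathbf{x}; \mathfrak{g}),
\]
turns the condition $\mathfrak{k}\mathfrak{g}\mathfrak{k}^{-1} \in \mathfrak{G}_O$ into $\mathfrak{g}$-fixedness of $O'$, so the inner sum contributes a constant $|\mathfrak{G}|$ whenever $O'$ is $\mathfrak{g}$-fixed, which cancels the prefactor and recovers (1). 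The main obstacle here is the $\Omega$-identification above; it follows from the fact that $f \mapsto f \circ \mathfrak{k}$ is a monomial-preserving bijection $\Fix_{\mathfrak{k}\mathfrak{g}\mathfrak{k}^{-1}}(P_{\mathfrak{k}O'}) \to \Fix_\mathfrak{g}(P_{O'})$, since $\mathfrak{k}$ merely relabels the vertex set $N$ and the weight $\mathbf{x}^f = \prod_{v \in N} x_{f(v)}$ is a product over all of $N$.

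For (3), I would group the sum over $\mathcal{A}(G)$ in (2) into orbits using the transversal $\mathcal{T}$. Within the orbit of $O \in \mathcal{T}$, all $|\mathfrak{G}(O)|$ summands coincide with $t^{\asc(O)} \Omega(P_O, \mathfrak{G}_O, \mathbf{x})\uparrow_{\mathfrak{G}_O}^{\mathfrak{G}}$, by the identification from (2) together with the standard fact that induction from conjugate subgroups of conjugately-identified class functions yields the same global class function on $\mathfrak{G}$; this cancels the factor $1/|\mathfrak{G}(O)|$ exactly, leaving one contribution per orbit representative and giving (3).
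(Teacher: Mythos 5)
Your proposal is correct and follows essentially the same route as the paper: part (1) by splitting the $\mathfrak{g}$-fixed colorings according to the induced acyclic orientation $O_f$ (with $\asc(f)=\asc(O_f)$ and equivariance of $f\mapsto O_f$), and part (2) by expanding the induced class function, applying orbit--stabilizer, and reindexing via the invariance identity $\Omega(P_{\mathfrak{h}O},\mathfrak{G}_{\mathfrak{h}O},\mathbf{x};\mathfrak{h}\mathfrak{g}\mathfrak{h}^{-1})=\Omega(P_{O},\mathfrak{G}_{O},\mathbf{x};\mathfrak{g})$ to reduce to (1). Your orbit-grouping argument for part (3), using that induction of conjugately-transported class functions from conjugate stabilizers agrees, is the standard deduction the paper leaves implicit, so no discrepancy there.
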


\begin{proof}
Let $\mathfrak{g} \in \mathfrak{G}$, and let $f$ be a proper coloring of $G$. Let $O_f$ be the orientation of $G$ given by directing $v$ to $u$ if $f(v) > f(u)$. Then $O_f$ is an acyclic orientation. We also see that $\mathfrak{g}O_f = O_f$, and that $\asc(f) = \asc(O_f).$ Then $f$ is a $P_{O_f}$-partition. 
Thus \begin{align*} \chi(G, \mathfrak{G}, t, \mathbf{x}; \mathfrak{g}) & = \sum\limits_{f \in \Fix_{\mathfrak{g}}(C_G)} t^{\asc(f)} \mathbf{x}^f \\
& = \sum\limits_{O \in \Fix_{\mathfrak{g}}(A(G))} t^{\asc(O)}\sum\limits_{f \in \Fix_{\mathfrak{g}}(C_O): O_f = O} \mathbf{x}^f \\
& = \sum\limits_{O \in \Fix_{\mathfrak{g}}(A(G))} t^{\asc(O)} \Omega(P_O, \mathfrak{G}_O, \mathbf{x}; \mathfrak{g}). \end{align*}

To prove our second formula, we have
\begin{align*}
    & \sum\limits_{O \in A(G)} \frac{t^{\asc(O)}}{|\mathfrak{G}(O)|}\Omega(P_O, \mathfrak{G}_{O}, \mathbf{x})\uparrow_{\mathfrak{G}_O}^{\mathfrak{G}}(\mathfrak{g}) \\
    & = \sum\limits_{O \in A(G)} \frac{t^{\asc(O)}}{|\mathfrak{G}(O)|} \frac{1}{|\mathfrak{G}_{O}|} \sum\limits_{\mathfrak{h} \in \mathfrak{G}: \mathfrak{hg}\mathfrak{h}^{-1} \in \mathfrak{G}_{O}} \Omega(P_O, \mathfrak{G}_{O}, \mathbf{x}; \mathfrak{hg}\mathfrak{h}^{-1}) \\
     & = \frac{1}{|\mathfrak{G}|} \sum\limits_{\mathfrak{h} \in \mathfrak{G}} \sum\limits_{\substack{ O \in A(G) \\ \mathfrak{hg}\mathfrak{h}^{-1} \in \mathfrak{G}_{O}}} t^{\asc(O)}\Omega(P_O, \mathfrak{G}_{O}, \mathbf{x}; \mathfrak{hg}\mathfrak{h}^{-1}) \\
     & = \frac{1}{|\mathfrak{G}|} \sum\limits_{\mathfrak{h} \in \mathfrak{G}} \sum\limits_{\substack{O \in A(G) \\ \mathfrak{g} \in \mathfrak{G}_{\mathfrak{h}^{-1}O}}} t^{\asc(O)} \Omega(P_{\mathfrak{h}^{-1}O}, \mathfrak{G}_{\mathfrak{h}^{-1}O}, \mathbf{x}; \mathfrak{g}) \\
      & = \frac{1}{|\mathfrak{G}|} \sum\limits_{\mathfrak{h} \in \mathfrak{G}} \sum\limits_{\substack{O \in A(G)\\ \mathfrak{g} \in \mathfrak{G}_{O}}} t^{\asc(O)} \Omega(P_{O}, \mathfrak{G}_{O}, \mathbf{x}; \mathfrak{g}) \\
    &= \sum\limits_{O \in \Fix_{\mathfrak{g}}(\mathcal{A}(G))} t^{\asc(O)} \Omega(P_O, \mathfrak{G}_{O}, \mathbf{x}; \mathfrak{g}). 
\end{align*}
The first equality is a formula for computing induced characters. The second equality comes from the Orbit-Stabilizer Theorem, and changing the order of summation. The third equality is due to the fact that our quasisymmetric functions are invariants, so $\Omega(P_O, \mathfrak{G}_{O}, \mathbf{x}; \mathfrak{hg}\mathfrak{h}^{-1}) = \Omega(P_{\mathfrak{h}^{-1}O}, \mathfrak{G}_{\mathfrak{h}^{-1}O}, \mathbf{x}; \mathfrak{g}).$ Finally, we can reindex our summation by replacing $O$ with $\mathfrak{h}O$. Since $A(G)$ is invariant under $\mathfrak{G}$, we end up with the same number of terms in the sum. Since $\asc(O) = \asc(\mathfrak{g}O)$, we obtain the fourth equality. The last equality is then immediate.
\end{proof}

\section{Combinatorial Reciprocity Theorem}
\label{sec:crt}
As stated in the introduction, to us the setting of a combinatorial reciprocity theorem consists of a subspace $V$ of a ring of formal power series in some number of variables, and an involution $\omega$ on $V$. Then given two generating functions $f$ and $g$, a combinatorial reciprocity theorem is the claim that $f = \omega g$.

For example, let $V$ be the vector space of polynomial functions of degree $d$. Given a polynomial $p(x)$, we can consider the formal power series $\sum\limits_{n \geq 0} p(n) t^n$. In this way, $V$ is a subspace of the ring of formal power series in $t$. We also know that $V$ is part of a Hopf algebra, and so $\omega = (-1)^d S$ is an involution on $V$. This is the setting for many combinatorial reciprocity theorems in the literature.

Similarly, one can let $V$ be the vector space of quasisymmetric functions (or symmetric functions) of degree $d$, and let $\omega = (-1)^d S$, where $S$ is the antipode. There are several examples of combinatorial reciprocity theorems involving $\qsym$ or sym, including Theorem 4.2 of Stanley \cite{stanley-coloring-1} or Theorem 4.2 of Grinberg \cite{grinberg}.

\subsection{Combinatorial Reciprocity for Double Posets}

 Given a double poset $D$, the dual poset $D^{\ast}$ is obtained by reversing the first partial order $\leq_1$ of $D$.  We prove a combinatorial reciprocity theorem for locally special double posets. The orbital version of the combinatorial reciprocity theorem was previously obtained by Grinberg \cite{grinberg}. 
 We also prove a combinatorial reciprocity theorem for the chromatic quasisymmetric class function of a digraph.

\begin{theorem}
Let $D$ be a locally special double poset on a finite set $N$, and let $\mathfrak{G} \subseteq \Aut(D)$.

Then $(-1)^{|N|} \sgn S \Omega(D, \mathfrak{G}, \mathbf{x}) = \Omega(D^{\ast}, \mathfrak{G}, \mathbf{x})$.

Also, $(-1)^{|N|} \sgn \Omega(D, \mathfrak{G}, -x) = \Omega(D^{\ast}, \mathfrak{G}, x)$.
\label{thm:combrecdbl}
\end{theorem}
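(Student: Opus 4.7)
The plan is to pass to the monomial expansion and reduce everything to a $\mathfrak{g}$-equivariant combinatorial identity on set compositions, then attack that identity with a sign-reversing involution powered by a $D$-compatible linear order.

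Using the monomial expansion $\Omega(D,\mathfrak{G},\mathbf{x}) = \sum_{\alpha \models |N|}\chi_{\alpha,D}\, M_\alpha$ from Theorem \ref{thm:coloring}(2) together with the antipode formula $S(M_\alpha) = (-1)^{\ell(\alpha)}\sum_{\beta \leq \alpha} M_{\overleftarrow\beta}$, I would expand $(-1)^{|N|}\sgn\, S\,\Omega(D,\mathfrak{G},\mathbf{x})$ and reindex by $\gamma = \overleftarrow\beta$. Comparing monomial coefficients with $\Omega(D^{\ast},\mathfrak{G},\mathbf{x}) = \sum_\gamma \chi_{\gamma,D^{\ast}} M_\gamma$ and evaluating at $\mathfrak{g}$, the theorem reduces to showing, for each composition $\gamma \models |N|$ and each $\mathfrak{g}\in\mathfrak{G}$,
\[
\bigl|\Fix_{\mathfrak{g}}(X_{\gamma,D^{\ast}})\bigr| \;=\; \sgn(\mathfrak{g})\!\!\sum_{\alpha \geq \overleftarrow\gamma}(-1)^{|N|-\ell(\alpha)}\bigl|\Fix_{\mathfrak{g}}(X_{\alpha,D})\bigr|.
\]
So the whole combinatorial content lives in pairs $(\alpha,C)$ with $\alpha \geq \overleftarrow\gamma$ and $C \in \Fix_{\mathfrak{g}}(X_{\alpha,D})$, weighted by $(-1)^{|N|-\ell(\alpha)}$.

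Next I would fix a $D$-compatible linear order $\ell$ using Lemma \ref{lem:compatible} and design an involution $\iota$ on the disjoint union $\bigsqcup_{\alpha \geq \overleftarrow\gamma}\Fix_{\mathfrak{g}}(X_{\alpha,D})$. The idea, inspired by Grinberg's nonequivariant argument, is to scan the blocks of a $D$-set composition $C$ in order and locate the first block $C_i$ that is \emph{inversion-free} but not already split as finely as $\overleftarrow\gamma$ forces, or the first consecutive pair $C_i | C_{i+1}$ whose concatenation is inversion-free and compatible with the refinement bound $\alpha \geq \overleftarrow\gamma$. Using Proposition \ref{prop:increasing}, each inversion-free block has a \emph{unique} increasing $\leq_1$-extension, giving a canonical way to toggle between ``split along the increasing extension at the first internal cut compatible with $\overleftarrow\gamma$'' and ``merge back''. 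This flips $\ell(\alpha)$ by one, so $\iota$ is sign-reversing.

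The fixed points of $\iota$ are exactly the compositions $C$ where every block contains an inversion (so it cannot be refined by Proposition \ref{prop:increasing}) but the block structure is ``maximally merged'' subject to $\alpha \geq \overleftarrow\gamma$. I would then argue that reversing such a $C$ and interpreting the result through the duality (recall that $D^{\ast}$ reverses $\leq_1$ only, so $D$-inversions inside a block translate to $D^{\ast}$-admissible blocks) gives a $\mathfrak{g}$-equivariant bijection between these fixed points and $\Fix_{\mathfrak{g}}(X_{\gamma,D^{\ast}})$. The $\sgn(\mathfrak{g})$ factor is expected to emerge here: when $\mathfrak{g}$ stabilizes a fixed-point configuration, it must permute the elements inside each block, and comparing this permutation with the reference linear order $\ell$ (which is not necessarily $\mathfrak{g}$-invariant) produces exactly the sign character $\sgn(\mathfrak{g})$.

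The hardest step, I expect, is verifying that the involution is genuinely $\mathfrak{g}$-equivariant and that the bookkeeping of $\sgn(\mathfrak{g})$ under the bijection on fixed points is correct. Checking equivariance forces the choice of ``first'' block or ``first'' internal cut to be made with respect to a $\mathfrak{G}$-invariant total ordering of blocks, which is fine, but the internal choice must be computed through $\ell|_{C_i}$, and one has to show that the two possible choices of canonical ordering on a $\mathfrak{g}$-fixed block differ by a permutation whose sign lines up with $\sgn(\mathfrak{g})$. Once this sign accounting is pinned down, the first statement follows. The second statement then drops out of Proposition \ref{prop:global2}(3) by applying principal specialization to both sides.
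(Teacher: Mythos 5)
Your reduction to the coefficient identity and the decision to attack it with a sign-reversing involution are reasonable in principle, but the involution you describe is not well defined on the relevant index set. For a fixed $\mathfrak{g}$, every composition in $\Fix_{\mathfrak{g}}(X_{\alpha,D})$ has all of its blocks equal to unions of cycles of $\mathfrak{g}$. Your toggle splits a block along its unique increasing $\leq_1$-extension at a ``first internal cut,'' and such a cut will in general separate two elements of the same $\mathfrak{g}$-cycle; the output is then no longer $\mathfrak{g}$-fixed, so it does not lie in $\bigsqcup_{\alpha\geq\overleftarrow{\gamma}}\Fix_{\mathfrak{g}}(X_{\alpha,D})$ at all. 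The obstacle is not $\mathfrak{G}$-equivariance of the choice of ``first'' block, as you suggest, but closure under the single element $\mathfrak{g}$. The only way to stay inside the fixed-point sets is to merge and split with whole $\mathfrak{g}$-cycles as atoms, i.e.\ to work in the quotient double poset $D/\mathfrak{g}$ on $\Cyc(\mathfrak{g})$; this in turn requires the facts that $D/\mathfrak{g}$ is again locally special (this is where the hypothesis genuinely enters) and that quotienting commutes with dualization, $(D/\mathfrak{g})^{\ast}=D^{\ast}/\mathfrak{g}$, neither of which appears in your outline.

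The sign bookkeeping is also off. Both $|\Fix_{\mathfrak{g}}(X_{\gamma,D^{\ast}})|$ and the number of surviving fixed points of an involution are plain nonnegative counts, so $\sgn(\mathfrak{g})$ cannot ``emerge'' at the end from comparing block-internal permutations with $\ell$; it must be absorbed before the cancellation via $\sgn(\mathfrak{g})=(-1)^{|N|-\cyc(\mathfrak{g})}$, so that a $\mathfrak{g}$-fixed composition $C$ carries weight $(-1)^{\cyc(\mathfrak{g})+\ell(\alpha(C))}$. This forces the survivors to be the maximally \emph{refined} $\mathfrak{g}$-fixed compositions, those whose blocks are single cycles and whose length is exactly $\cyc(\mathfrak{g})$, all weighted $+1$ --- not the ``maximally merged, every block contains an inversion'' configurations you propose, which would carry non-constant signs and could not biject onto a positively counted set. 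Note also that the paper's own proof takes an entirely different and much shorter route: it rewrites $\Omega(D,\mathfrak{G},\mathbf{x};\mathfrak{g})$ as the weighted enumerator $\Omega(D/\mathfrak{g},\mathbf{1}/\mathfrak{g},\mathbf{x})$ (Lemma \ref{lem:grinberg}) and then invokes Grinberg's non-equivariant weighted reciprocity (Proposition \ref{prop:grinberg}), with $\sgn(\mathfrak{g})=(-1)^{|N|-\cyc(\mathfrak{g})}$ reconciling the change of degree from $|N|$ to $\cyc(\mathfrak{g})$; a repaired version of your involution would essentially be reproving that input. Your last step, deducing the polynomial identity by principal specialization, is fine.
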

The second result follows from the first via principal specialization. Our proof relies a lot on the work of Grinberg \cite{grinberg}.

First, we discuss a weighted generalization of $\chiex{D}$. Given a weight function $\mathbf{w}: N \to \mathbb{N}$, a double poset $D$ on $N$, and a $D$-partition $\sigma$, define \[\mathbf{x}^{\mathbf{w}, \sigma} = \prod_{i \in N} x_{\sigma(i)}^{\mathbf{w}(i)}.\]

We let $\Omega(D, \mathbf{w}, \mathbf{x}) = \sum\limits_{\sigma} \mathbf{x}^{\mathbf{w}, \sigma}$ where the sum is over all $D$-partitions. The following is Theorem 4.2 of Grinberg \cite{grinberg}:
\begin{proposition}
Let $D$ be a double poset on a finite set $N$, and let $\mathbf{w}: N \to \mathbb{N}$. If $D$ is locally special, then
 \[(-1)^{|N|} S \Omega(D, \mathbf{w}, \mathbf{x}) = \Omega(D^{\ast}, \mathbf{w}, \mathbf{x}).\]
 \label{prop:grinberg}
\end{proposition}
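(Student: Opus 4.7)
The plan is to prove the reciprocity by expanding both sides in the monomial quasisymmetric basis and then reducing the identity to a combinatorial statement that is established via a sign-reversing involution exploiting the locally special hypothesis.

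First I would write $\Omega(D, \mathbf{w}, \mathbf{x})$ in terms of $D$-set compositions. Grouping the weighted monomials $\mathbf{x}^{\mathbf{w},\sigma}$ by the level-set composition $C(\sigma)$ of a $D$-partition $\sigma$ gives $\Omega(D, \mathbf{w}, \mathbf{x}) = \sum_{C \in X_D} M_{\mathbf{w}(C)}$, where $\mathbf{w}(C) = (\mathbf{w}(C_1), \ldots, \mathbf{w}(C_{\ell(C)}))$ and $\mathbf{w}(B) = \sum_{v \in B}\mathbf{w}(v)$ on a block $B$. The analogous expansion holds for $\Omega(D^*, \mathbf{w}, \mathbf{x})$. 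Applying $S(M_\alpha) = (-1)^{\ell(\alpha)}\sum_{\beta \leq \alpha} M_{\overleftarrow{\beta}}$, reindexing each coarsening $\beta$ of $\mathbf{w}(C)$ by the corresponding coarsening $C'$ of $C$ as set compositions (so that $\overleftarrow{\beta} = \mathbf{w}(\overleftarrow{C'})$), and swapping the order of summation yields
\[(-1)^{|N|}S\,\Omega(D,\mathbf{w},\mathbf{x}) = \sum_{C'}\Bigl(\sum_{\substack{C \in X_D \\ C \geq C'}}(-1)^{|N|+\ell(C)}\Bigr) M_{\mathbf{w}(\overleftarrow{C'})}.\]
Comparing monomial coefficients with $\Omega(D^*, \mathbf{w}, \mathbf{x}) = \sum_{C^\dagger \in X_{D^*}} M_{\mathbf{w}(C^\dagger)}$, the theorem reduces to showing that for every set composition $C'$ of $N$, the inner signed sum equals $1$ when $\overleftarrow{C'} \in X_{D^*}$ and $0$ otherwise.

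To establish this combinatorial identity I would construct an explicit sign-reversing involution $\tau$ on $\{C \in X_D : C \geq C'\}$. The natural construction is to look inside each block $C'_j$ at its refinement into consecutive blocks of $C$, use the $D$-compatible linear order from Lemma \ref{lem:compatible} to select a canonical adjacent pair of elements, and either split them apart (when they lie in the same block of $C$) or merge them together (when they lie in adjacent blocks of $C$). Such a toggle preserves both $C'$ and the property of being a $D$-set composition, while flipping the parity of $\ell(C)$. Proposition \ref{prop:increasing} and Lemma \ref{lem:inversion-to-descent} ensure that such a canonical pair exists unless each block $C'_j$ is inversion-free in both $D$ and $D^*$; in that boundary case the only fixed point is the total refinement of $C'$ by the $D$-compatible order, contributing the sign $(-1)^{|N|+|N|}=1$.

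The main obstacle is verifying that $\tau$ is genuinely involutive and remains inside $X_D$ after toggling. The locally special hypothesis is indispensable here: without it, splitting or merging an adjacent pair could either violate the order-ideal prefix condition defining a $D$-set composition or produce a different canonical pair on the image side, breaking involutivity. Lemma \ref{lem:compatible} (producing a single linear order compatible with every inversion-free restriction at once) together with Proposition \ref{prop:increasing} (uniqueness of the increasing linear extension when inversion-free) is exactly what rigidifies the toggle. I would also verify the equivalence $\overleftarrow{C'} \in X_{D^*}$ iff $C' \in X_D$ and each block of $C'$ is $D^*$-inversion-free, which matches the fixed-point characterization above. Carrying out this case analysis while tracking both the order-ideal prefixes and the intra-block inversion conditions simultaneously constitutes the principal technical work of the proof.
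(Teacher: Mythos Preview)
The paper does not prove Proposition~\ref{prop:grinberg} at all: it is quoted verbatim as Theorem~4.2 of Grinberg~\cite{grinberg} and used as a black box in the proof of Theorem~\ref{thm:combrecdbl}. So there is no ``paper's own proof'' to compare against; your proposal is an attempt to supply what the paper omits.

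Your overall strategy---expand in the $M$-basis via $D$-set compositions, push the antipode through, and cancel by a sign-reversing involution on $\{C\in X_D:C\ge C'\}$---is the right one and is essentially the route Grinberg takes. The reduction to the block-by-block identity is correct, and the factoring over the blocks of $C'$ (using that each $C'_j$ is a difference of $\leq_1$-order ideals) is the natural way to localize the problem.

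There is, however, a concrete error in your fixed-point analysis. You assert that the unique surviving refinement is ``the total refinement of $C'$ by the $D$-compatible order~$\ell$,'' occurring exactly when every block is inversion-free in \emph{both} $D$ and $D^*$. This is false. Take $N=\{a,b\}$ with $a<_1 b$ and $b<_2 a$, and $C'=(\{a,b\})$. The block has a $D$-inversion $(a,b)$ but no $D^{*}$-inversion, so the right-hand side must be~$1$. Indeed the only $D$-set composition refining $C'$ is $a\,|\,b$, contributing $(-1)^{2+2}=1$. But the $D$-compatible order here is $b<_{\ell}a$ (the edge $a\prec_1 b$ is reversed in $G(D)$ because $a>_2 b$), so your ``total $\ell$-refinement'' is $b\,|\,a$, which is not in $X_D$. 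The correct statement is that the sum equals~$1$ iff every block of $C'$ is $D^{*}$-inversion-free (with no condition on $D$-inversions), and in that case the surviving refinement need not be the $\ell$-ordered singletons. Your equivalence ``$\overleftarrow{C'}\in X_{D^*}$ iff $C'\in X_D$ and each block is $D^*$-inversion-free'' is likewise wrong: the left side only forces the prefixes of $C'$ to be $\leq_1$-order ideals, not that the blocks be $D$-inversion-free. The involution and its fixed-point description therefore need to be reworked; Grinberg's argument handles this by a more careful toggle that does not presuppose the $\ell$-refinement lies in $X_D$.
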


We let $\Aut(D, \mathbf{w})$ be the set of automorphisms $\mathfrak{g}$ of $D$ with the property that $\mathbf{w} \circ \mathfrak{g} = \mathbf{w}$. Let $\mathfrak{G}$ be a subgroup of $\Aut(D, \mathbf{w})$. Then $\mathfrak{G}$ acts on the set of $D$-partitions.
We let $\chiexevaluate{D, w} = \sum\limits_{\sigma} \mathbf{x}^{\mathbf{w}, \sigma}$, where the sum is over all $D$-partitions $\sigma$ with the property that $\sigma \circ \mathfrak{g} = \sigma$. Since $\mathfrak{g} \in \Aut(G, \mathbf{w})$, the resulting power series is a quasisymmetric function.

Given $\mathfrak{g} \in \mathfrak{G}$, let $\Cyc(\mathfrak{g})$ be the set of cycles of $\mathfrak{g}$. We can define a new weight function $\mathbf{w}/\mathfrak{g}: \Cyc(\mathfrak{g}) \to \mathbb{N}$ by $\mathbf{w}/\mathfrak{g} (C) = \sum\limits_{x \in C} \mathbf{w}(x)$. Given two cycles $C_1$ and $C_2$ of $\Cyc(\mathfrak{g})$, and $i \in \{1,2\}$, we say $C_1 \leq_i C_2$ if there exists $x \in C_1$ and $y \in C_2$ such that $x \leq_i y$. This turns $\Cyc(\mathfrak{g})$ into a double poset that we denote by $D/\mathfrak{g}$. The following is a combination of parts of Propositions 7.5 and 7.6 of Grinberg \cite{grinberg}.
\begin{lemma}
Let $D$ be a double poset on a finite set $N$, let $\mathbf{w}: N \to \mathbb{N}$, and let $\mathfrak{G} \subseteq \Aut(D, \mathbf{w})$. Given $\mathfrak{g} \in \mathfrak{G}$, we have
\[\chiexevaluate{D, w} = \chiex{D/\mathfrak{g}, w/\mathfrak{g}}.\]
Moreover, if $D$ is locally special, then so is $D/\mathfrak{g}$.
\label{lem:grinberg}
\end{lemma}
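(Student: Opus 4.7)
The plan is to construct a weight-preserving bijection between the $\mathfrak{g}$-fixed $D$-partitions and the $(D/\mathfrak{g})$-partitions, and then handle the ``moreover'' claim separately. Any $\sigma : N \to \mathbb{N}$ fixed by $\mathfrak{g}$ must be constant on every cycle $C$ of $\mathfrak{g}$, so it descends to $\bar\sigma : \Cyc(\mathfrak{g}) \to \mathbb{N}$ given by $\bar\sigma(C) = \sigma(x)$ for any $x \in C$, and the operation is clearly invertible: any $\tau : \Cyc(\mathfrak{g}) \to \mathbb{N}$ lifts to a $\mathfrak{g}$-fixed function on $N$. Because $\mathbf{w}$ is $\mathfrak{g}$-invariant (and hence constant along cycles), a direct regrouping yields
\[
\mathbf{x}^{\mathbf{w}, \sigma} = \prod_{i \in N} x_{\sigma(i)}^{\mathbf{w}(i)} = \prod_{C \in \Cyc(\mathfrak{g})} x_{\bar\sigma(C)}^{\sum_{i \in C} \mathbf{w}(i)} = \mathbf{x}^{\mathbf{w}/\mathfrak{g}, \bar\sigma},
\]
so once the bijection is established the identity in the lemma follows by summing.

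The heart of the argument is to show that $\sigma$ is a $D$-partition if and only if $\bar\sigma$ is a $(D/\mathfrak{g})$-partition. The $\leq_1$-weakly-increasing condition translates transparently in both directions from the definition of $\leq_1$ on $D/\mathfrak{g}$. The subtle part is the strict inequality at inversion pairs: if $(C_1, C_2)$ is an inversion of $D/\mathfrak{g}$, one has \emph{separate} witnesses $a \leq_1 b$ (with $a \in C_1$, $b \in C_2$) and $u \leq_2 v$ (with $u \in C_2$, $v \in C_1$), which a priori need not pair up as a single inversion of $D$. I would produce a genuine inversion $(x, y) \in D$ with $x \in C_1$, $y \in C_2$ by combining Lemma \ref{lem:inversion-to-descent} with $\mathfrak{g}$-translation: apply it to the $\leq_1$-order ideals cut out by $v$ and $b$, then use that $\mathfrak{g}$ preserves the set of inversions to slide the $\leq_1$-witness and the $\leq_2$-witness onto a common pair of points inside $C_1 \times C_2$. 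This is the step that genuinely uses local specialness of $D$ (a hypothesis inherited from the context of Theorem \ref{thm:combrecdbl} in which the lemma is applied); without it, easy examples on two $2$-cycles show the identity can actually fail.

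For the moreover claim, suppose $C_2$ covers $C_1$ in the $\leq_1$ order of $D/\mathfrak{g}$, and pick witnesses $a \in C_1$, $b \in C_2$ with $a \leq_1 b$. Take any saturated $\leq_1$-chain $a = z_0 \prec_1 z_1 \prec_1 \cdots \prec_1 z_k = b$ in $D$. By local specialness of $D$, each consecutive pair $z_j, z_{j+1}$ is $\leq_2$-comparable. The induced sequence $C_{z_0}, \ldots, C_{z_k}$ is a weakly increasing chain in $\leq_1$ of $D/\mathfrak{g}$ from $C_1$ to $C_2$; since $C_2$ covers $C_1$, the sequence takes only the values $C_1$ and $C_2$. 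Choosing any index $j$ with $z_j \in C_1$ and $z_{j+1} \in C_2$, the $\leq_2$-comparability of $z_j$ and $z_{j+1}$ in $D$ witnesses $\leq_2$-comparability of $C_1$ and $C_2$ in $D/\mathfrak{g}$, as needed.

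The main obstacle I expect is the strict-inequality step in the bijection argument, namely lifting an inversion of $D/\mathfrak{g}$ to an inversion of $D$ using local specialness and the $\mathfrak{g}$-action; once that lifting is in hand, the rest of the proof is formal regrouping and a short cover chase.
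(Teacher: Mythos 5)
The paper does not actually prove Lemma \ref{lem:grinberg}; it cites Propositions 7.5 and 7.6 of Grinberg, so your attempt is new material rather than a rederivation, and its skeleton is the right one: $\mathfrak{g}$-fixed functions correspond to functions on $\Cyc(\mathfrak{g})$, the weights regroup as you say, your remark that local specialness is genuinely needed for the identity (the printed statement omits it; a two-$2$-cycle example does break it, and the hypothesis is supplied by the context of Theorem \ref{thm:combrecdbl}) is correct, and your cover chase for the ``moreover'' claim works. The genuine gap is exactly at the step you flag as the heart of the argument. First, Lemma \ref{lem:inversion-to-descent} cannot do what you ask of it: its hypothesis is that an inversion of $D$ already lies in $J\setminus I$ and its conclusion is a descent pair, so it cannot manufacture an inversion out of the two separate witnesses $a\leq_1 b$ and $u\leq_2 v$, and ``slide the witnesses by $\mathfrak{g}$-translation onto a common pair'' is a hope, not an argument. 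Second, and more seriously, the intermediate statement you aim for is false: a quotient inversion $(C_1,C_2)$ need not lift to any inversion of $D$ inside $C_1\times C_2$, even when $D$ is locally special. Take $N=\{a_1,a_2,c_1,c_2,b_1,b_2\}$, $\mathfrak{g}=(a_1a_2)(c_1c_2)(b_1b_2)$, let $\leq_1$ consist of the two chains $a_i<_1c_i<_1b_i$, and let $\leq_2$ be generated by $c_i<_2a_i$, $c_i<_2b_i$, $b_1<_2a_2$, $b_2<_2a_1$. This $D$ is $\mathfrak{g}$-invariant and locally special; in $D/\mathfrak{g}$ the pair of cycles $(C_a,C_b)$ is an inversion (via $a_1<_1b_1$ and $b_1<_2a_2$), yet the only inversions of $D$ are $(a_1,c_1)$ and $(a_2,c_2)$, neither of which meets $C_b$. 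So the pair you propose to produce simply does not exist, and no power of $\mathfrak{g}$ creates it.

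What is true, and all the bijection needs, is weaker: for every $\mathfrak{g}$-fixed $D$-partition $\sigma$ and every inversion $(C,C')$ of $D/\mathfrak{g}$ one has $\bar\sigma(C)<\bar\sigma(C')$. To prove it, pick witnesses $x\in C$, $y\in C'$ with $x<_1 y$ and a saturated chain $x=z_0\prec_1\cdots\prec_1 z_k=y$; local specialness makes each consecutive pair $\leq_2$-comparable. If some step is a descent, that step is an inversion of $D$ (its endpoints may lie in other cycles, as in the example above, where the strictness across $(C_a,C_b)$ is witnessed by $(a_1,c_1)$), and chaining the weak inequalities around it gives $\sigma(x)<\sigma(y)$. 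If every step ascends, then $x\leq_2 y$; writing the witness for $C'\leq_2 C$ as $\mathfrak{g}^a y\leq_2 \mathfrak{g}^b x$ and applying $\mathfrak{g}^a$ to $x\leq_2 y$ yields $\mathfrak{g}^a x\leq_2 \mathfrak{g}^a y\leq_2 \mathfrak{g}^b x$; a strict $\leq_2$-relation between an element and its image under a power of $\mathfrak{g}$ is impossible (iterate it around the cycle and contradict antisymmetry), so $\mathfrak{g}^a x=\mathfrak{g}^b x$, then $\mathfrak{g}^a y=\mathfrak{g}^a x$, hence $x=y$, contradicting $x<_1 y$. With this replacement for your crux step, the rest of your write-up (the weight regrouping, the easy direction of the equivalence, and the ``moreover'' argument) goes through.
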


Finally, we let $\mathbf{1}: N \to \mathbb{N}$ be the function defined by $\mathbf{1}(n) = 1$ for all $n \in N$.
\begin{proof}[Proof of Theorem \ref{thm:combrecdbl}]
We have
\begin{align*}
    (-1)^{|N|} S \sgn(\mathfrak{g})\chiexevaluate{D, 1} & = (-1)^{|N|} (-1)^{|N|-\cyc(\mathfrak{g})} S \chiex{D/\mathfrak{g}, 1/\mathfrak{g}} \\ 
    & = \chiex{(D/\mathfrak{g})^{\ast}, 1/\mathfrak{g}} \\
    & = \chiex{D^{\ast}/\mathfrak{g}, 1/\mathfrak{g}} \\
    & = \chiexevaluate{D^{\ast}, 1} 
\end{align*}
where the first and last equalities are due to Lemma \ref{lem:grinberg}, the second equality is due to Proposition \ref{prop:grinberg}.
\end{proof}

\subsection{Combinatorial Reciprocity Theorem for Digraphs}

Now we discuss a combinatorial reciprocity theorem for directed graphs. Given a digraph $G$ on $N$, an acyclic coloring is a pair $( O, f)$ satisfying:
\begin{enumerate}
\item $O$ is an acyclic orientation of $G$
\item For every edge $(u,v) \in E(O)$, $f(u) \leq f(v)$.
\end{enumerate}
We modify the definition of descent. An $O$-descent is an edge $(u,v) \in E(G)$ where $(v,u) \in E(O)$.
Let $Y_G$ be the acyclic colorings. Then $\mathfrak{G}$ acts on $Y_G$. 
For $\mathfrak{g} \in \mathfrak{G}$, define:
\[\overline{\chi}(G, \mathfrak{G}, t, \mathbf{x}) = \sum\limits_{(O,f) \in \Fix_{\mathfrak{g}}(Y_G)} t^{\des(f)} \mathbf{x}^f. \]
Then $\overline{\chi}(G, \mathfrak{G}, t, \mathbf{x})$ is a class function, and hence is a quasisymmetric class function.

\begin{theorem}
Let $G$ be a digraph and let $\mathfrak{G} \subseteq \Aut(G)$. Then \[(-1)^{|N|} S \sgn \chi(G, \mathfrak{G}, t, \mathbf{x}) = \overline{\chi}(G, \mathfrak{G}, t, \mathbf{x}) \] and 
\[(-1)^{|N|} \sgn \chi(G, \mathfrak{G}, t, -x) = \overline{\chi}(G, \mathfrak{G}, t, x). \]
\label{thm:combrecdig}
\end{theorem}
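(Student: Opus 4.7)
The strategy is to reduce the identity to the double poset reciprocity theorem (Theorem \ref{thm:combrecdbl}) by decomposing both $\chi$ and $\overline{\chi}$ along acyclic orientations of $G$, applying reciprocity summand-by-summand, and matching the two resulting sums.

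I would first prove an analogue of Lemma \ref{lem:fundamental2}(1) for $\overline{\chi}$: for each $\mathfrak{g} \in \mathfrak{G}$,
\[\overline{\chi}(G, \mathfrak{G}, t, \mathbf{x})(\mathfrak{g}) = \sum_{O \in \Fix_{\mathfrak{g}}(\mathcal{A}(G))} t^{\des(O)}\, \Omega(P_O^*, \mathfrak{G}_O, \mathbf{x})(\mathfrak{g}).\]
Fixing a $\mathfrak{g}$-invariant acyclic orientation $O$, a map $f$ completes an acyclic coloring $(O,f) \in Y_G$ exactly when $f(u) \leq f(v)$ for every $(u,v) \in E(O)$. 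Since $P_O^*$ has both of its partial orders equal to the path-from-$x$-to-$y$ order of $O$, no inversion pairs arise, and the $P_O^*$-partition condition reduces to precisely this weak inequality along edges of $O$. Grouping $(O,f) \in \Fix_{\mathfrak{g}}(Y_G)$ by $O$ and following the same argument as in the proof of Lemma \ref{lem:fundamental2} yields the displayed identity.

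Next, the double poset $P_O$ is locally special: its second partial order is the opposite of its first, so any two $\leq_1$-comparable elements (in particular, every $\leq_1$-cover) are $\leq_2$-comparable. Hence Theorem \ref{thm:combrecdbl} gives $\omega\, \Omega(P_O, \mathfrak{G}_O, \mathbf{x}) = \Omega(P_O^*, \mathfrak{G}_O, \mathbf{x})$ with $\omega = (-1)^{|N|} S \sgn$. Applying $\omega$ to the decomposition of $\chi$ from Lemma \ref{lem:fundamental2}(1) termwise --- legitimate because $\omega$ acts only on the $\mathbf{x}$ variables and is $\mathbb{C}[t]$-linear --- produces
\[\omega \chi(G, \mathfrak{G}, t, \mathbf{x})(\mathfrak{g}) = \sum_{O \in \Fix_{\mathfrak{g}}(\mathcal{A}(G))} t^{\asc(O)}\, \Omega(P_O^*, \mathfrak{G}_O, \mathbf{x})(\mathfrak{g}).\]
To identify this sum with $\overline{\chi}$, I would observe that $\asc(O) = \des(O)$ for every acyclic orientation $O$. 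Unpacking the identity $\asc(O_f) = \asc(f)$ used in the proof of Lemma \ref{lem:fundamental2}: an $f$-ascent is an edge $(u,v) \in E(G)$ with $f(u) < f(v)$, which by the defining relation $(v,u) \in E(O_f) \iff f(v) > f(u)$ is precisely an edge of $G$ whose reverse lies in $E(O_f)$, namely an $O_f$-descent. Extending this identification to all acyclic orientations gives $\asc(O) = |\{(u,v) \in E(G) : (v,u) \in E(O)\}| = \des(O)$, so the two sums coincide
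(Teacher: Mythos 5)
Your reduction to Theorem \ref{thm:combrecdbl} follows the same strategy as the paper's proof, and your first two steps are sound: the decomposition of $\overline{\chi}$ over acyclic orientations, the observation that each $P_O$ is locally special (needed to invoke Theorem \ref{thm:combrecdbl}, and worth making explicit), and the termwise application of $(-1)^{|N|}S\sgn$ to the decomposition of $\chi$. The gap is the final matching step: the claim that $\asc(O)=\des(O)$ for every acyclic orientation is false. These are complementary statistics, with $\asc(O)+\des(O)=|E(G)|$; already for $G$ a single directed edge, the orientation agreeing with $G$ has $\asc(O)=1$ and $\des(O)=0$. What your unpacking of $\asc(f)=\asc(O_f)$ actually shows is that an $f$-ascent is an edge of $G$ whose reverse lies in $E(O_f)$, i.e.\ that $\asc(f)$ equals the number of edges of $G$ reversed in $O_f$; this tells you which statistic of $O_f$ records $\asc(f)$ (and it exposes a flipped convention between the definition of $O$-ascent and the construction of $O_f$), but it does not make the two statistics of one fixed orientation equal. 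So the two sums you want to identify, $\sum_O t^{\asc(O)}\Omega(P_O^{\ast},\mathfrak{G}_O,\mathbf{x};\mathfrak{g})$ and $\sum_O t^{\des(O)}\Omega(P_O^{\ast},\mathfrak{G}_O,\mathbf{x};\mathfrak{g})$, are being equated termwise on the strength of a false identity.

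The missing move is an orientation reversal. The paper matches the sums by identifying $P_O^{\ast}$-partitions with colorings $f$ such that $(\overleftarrow{O},f)\in Y_G$ and then reindexing the sum by $O\mapsto\overleftarrow{O}$, using $\asc(O)=\des(\overleftarrow{O})$. Under your reading, where $P_O^{\ast}$-partitions pair with $(O,f)$ itself, you must instead verify that the exponent produced by the decomposition of $\chi$ is the number of edges of $G$ reversed in $O$, i.e.\ $\des(O)$, by tracing the convention that $O_f$ directs edges from larger to smaller color --- not by asserting $\asc(O)=\des(O)$. Either repair closes the argument; as written it does not. Finally, you never address the second displayed identity; it follows from the first by principal specialization, via Proposition \ref{prop:global2}.
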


\begin{proof} 
Let $\mathfrak{g} \in \mathfrak{G}$. Then 
\begin{align*}
    (-1)^{|N|} S \sgn \chi(G, \mathfrak{G}, t, \mathbf{x}; \mathfrak{g}) & = \sum\limits_{O \in \Fix_{\mathfrak{g}}(\mathcal{A}(G))} t^{\asc(O)} (-1)^{|N|} S \sgn \Omega(P_O, \mathfrak{G}_O, \mathbf{x}; \mathfrak{g}) \\ 
    & = \sum\limits_{O \in \Fix_{\mathfrak{g}}(\mathcal{A}(G))} t^{\asc(O)} \Omega(P_O^{\ast}, \mathfrak{G}_O, \mathbf{x}; \mathfrak{g}) \\ 
    & = \sum\limits_{O \in \Fix_{\mathfrak{g}}(\mathcal{A}(G))} t^{\asc(O)} \sum\limits_{f: (\overleftarrow{O}, f) \in \Fix_{\mathfrak{g}}(Y_G)} \mathbf{x}^f  \\
    & = \sum\limits_{(O, f) \in \Fix_{\mathfrak{g}}(Y_G)} t^{\des(O)} \mathbf{x}^f = \overline{\chi}(G, \mathfrak{G}, t, \mathbf{x}; \mathfrak{g}).
\end{align*}
where the first equality follows from Lemma \ref{lem:fundamental2}, and the second equality is due to Theorem \ref{thm:combrecdbl}. The third equality comes from observing that $f \in P_{P_O^{\ast}}$ if and only if $(\overleftarrow{O}, f)$ is an acyclic coloring. The fourth equality comes from observing that $\asc(O) = \des(\overleftarrow{O})$, and reindexing the summation by substituting $O$ with $\overleftarrow{O}$.
\end{proof}

\section{Flawlessness}
\label{sec:increasing}
In this section, we study the property of being $M$-increasing or effectively flawless.
Let $D$ be a double poset on a finite set $N$, and let $\mathfrak{G} \subseteq \Aut(D)$. For $\alpha \models |N|$, let $V_{\alpha, D}$ be the vector space with basis $X_{\alpha, D}$. Then $V_{\alpha, D}$ is a $\mathfrak{G}$-module.

Given $\alpha \leq \beta \models |N|$, we define $\mathfrak{G}$-invariant injective maps $\theta_{\alpha, \beta}: V_{\alpha, D} \to V_{\beta, D}$.
Given a $D$-set composition $C$ of type $\alpha$, we let \begin{equation} \theta_{\alpha, \beta}(C) = \sum\limits_{C' \in X_{\beta, D}: C' \geq C} C'. \label{eq:theta} \end{equation}
\begin{proposition}
Let $D$ be a double poset on a finite set $N$. Let $\alpha \leq \beta \leq \gamma $ be integer compositions of $|N|$. Then $\theta_{\alpha, \beta}$ is an injective $\mathfrak{G}$-invariant map. Moreover, we have $\theta_{\gamma, \beta} \circ \theta_{\alpha, \beta} = \theta_{\gamma, \alpha}$.
\label{prop:thetaprops}
\end{proposition}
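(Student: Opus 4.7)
The plan is to verify in order the three assertions: $\mathfrak{G}$-equivariance, injectivity, and the composition identity (which, to make the indices compose, I read as $\theta_{\beta, \gamma} \circ \theta_{\alpha, \beta} = \theta_{\alpha, \gamma}$). The only genuinely combinatorial step will be an existence statement for $D$-refinements; the rest is bookkeeping once the right structural facts about the coarsening operator $C_{\alpha}(\cdot)$ are pinned down.

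For equivariance, I observe that the action of $\mathfrak{G} \subseteq \mathfrak{S}_N$ on set compositions preserves both the refinement order and the type: $C' \geq C$ holds if and only if $\mathfrak{g}C' \geq \mathfrak{g}C$, and $\alpha(\mathfrak{g}C) = \alpha(C)$. Consequently the sum \eqref{eq:theta} defining $\theta_{\alpha, \beta}(\mathfrak{g}C)$ is obtained from the one for $\theta_{\alpha, \beta}(C)$ by applying $\mathfrak{g}$ termwise, so $\theta_{\alpha, \beta}$ is $\mathfrak{G}$-equivariant.

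For injectivity, the key fact is that every set composition $C'$ of type $\beta$ has, since $\alpha \leq \beta$, a unique coarsening of type $\alpha$, namely $C_{\alpha}(C')$. Hence for $C' \in X_{\beta, D}$ and $C \in X_{\alpha, D}$, the relation $C' \geq C$ forces $C = C_{\alpha}(C')$. The supports of the vectors $\theta_{\alpha, \beta}(C)$ for distinct $C \in X_{\alpha, D}$ are therefore disjoint subsets of the basis $X_{\beta, D}$, so injectivity reduces to showing each such support is nonempty. To do this, given $C = C_1|\cdots|C_m \in X_{\alpha, D}$, I fix for each block $C_i$ a $\leq_1$-linear extension of the induced poset on $C_i$ and cut the resulting sequence into consecutive sub-blocks of the sizes dictated by $\beta$. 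That every prefix of the resulting refinement is still a $\leq_1$-order ideal follows because $C_1 \cup \cdots \cup C_{i-1}$ already is one and because prefixes of $\leq_1$-linear extensions of $C_i$ are down-sets in $C_i$; inversion-freeness in each new block is immediate since each sub-block sits inside the inversion-free $C_i$. This existence step is the main obstacle of the argument.

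For the composition identity, expanding the definitions yields
\[
\theta_{\beta, \gamma}\bigl(\theta_{\alpha, \beta}(C)\bigr) = \sum_{C'' \in X_{\gamma, D}} \bigl|\{C' \in X_{\beta, D} : C \leq C' \leq C''\}\bigr| \cdot C''.
\]
For each $C'' \in X_{\gamma, D}$ with $C'' \geq C$, the unique candidate intermediate composition of type $\beta$ is $C_{\beta}(C'')$, so it suffices to check that $C_{\beta}(C'') \in X_{\beta, D}$. The order-ideal condition on prefixes of $C_{\beta}(C'')$ is inherited from the corresponding prefixes of $C''$, and each block of $C_{\beta}(C'')$ is contained in a single block of $C$ (since $C_{\beta}(C'') \geq C$), hence has no inversions because $C \in X_{\alpha, D}$. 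Thus the multiplicity above equals $1$ exactly when $C'' \geq C$, which matches $\theta_{\alpha, \gamma}(C)$ and completes the identity.
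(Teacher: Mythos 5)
Your proof is correct, and your reading of the composition identity as $\theta_{\beta,\gamma}\circ\theta_{\alpha,\beta}=\theta_{\alpha,\gamma}$ matches what the paper actually proves (the indices in the statement are garbled). For equivariance and for the composition identity your argument coincides with the paper's: both reduce the double sum to the observation that for $C\leq C''$ there is a unique interpolating set composition of type $\beta$, namely $C_{\beta}(C'')$, and that it is again a $D$-set composition (prefix order ideals inherited from $C''$, inversion-freeness inherited from the blocks of $C$). Where you genuinely differ is injectivity. The paper asserts that the coarsening map $C'\mapsto C_{\alpha}(C')$ is a section of $\theta_{\alpha,\beta}$; taken literally this is not right, since $C_{\alpha}(\theta_{\alpha,\beta}(C))=n_C\,C$ where $n_C=|\{C'\in X_{\beta,D}:C'\geq C\}|$, and the argument only gives injectivity once one knows $n_C\geq 1$ for every $C\in X_{\alpha,D}$. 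You supply exactly this missing ingredient: the disjoint-support observation plus an explicit construction of a type-$\beta$ $D$-refinement of any type-$\alpha$ $D$-set composition (refine each block along a $\leq_1$-linear extension and cut according to $\beta$; prefixes remain $\leq_1$-order ideals and sub-blocks of inversion-free blocks are inversion-free). So your route is slightly longer but is the more complete argument; the existence step you isolate is precisely what makes the paper's ``section'' heuristic legitimate.
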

\begin{proof}
We see that $\theta_{\alpha, \beta}$ is $\mathfrak{G}$-invariant. To see that the map is injective, it is enough to note that the map $f: V_{\beta, D} \to V_{\alpha, D}$ given by $f(C) = C_{\alpha}(C)$ is a section for $\theta_{\alpha, \beta}$. Hence $\theta_{\alpha, \beta}$ is injective and $f$ is surjective.

Let $\alpha \leq \beta \leq \gamma$, and let $C \in X_{\alpha, D}$. Then \begin{align*} \theta_{\beta, \gamma} \circ \theta_{\alpha, \beta}(C) & = \sum\limits_{C' \in X_{\beta, D}: C' \geq C} \theta_{\beta, \gamma}(C') \\
& = \sum\limits_{C' \in X_{\beta, D}: C' \geq C} \sum\limits_{C'' \in X_{\gamma, D}: C'' \geq C'} C'' \\
& = \sum\limits_{C'' \in X_{\gamma, D}: C'' \geq C} \sum\limits_{C' \in X_{\beta, D}: C'' \geq C' \geq C} C'' \\ 
& = \sum\limits_{C'' \in X_{\gamma, D}: C'' \geq C} C'' = \theta_{\alpha, \gamma}(C). \end{align*}
The penultimate equality follows from the fact that there is exactly one set composition $C'$ of type $\beta$ with the property that $C \leq C' \leq C''$. Moreover, if $C$ and $C''$ are $D$-set compositions, then $C'$ is as well. Hence the inner summation only involves one term.
\end{proof}

\begin{theorem}
Let $D$ be a double poset on a finite set $N$, and let $\mathfrak{G} \subseteq \Aut(D)$. Then $\Omega(D, \mathfrak{G}, \mathbf{x})$ is $M$-increasing and $\Omega(D, \mathfrak{G}, x)$ is effectively flawless.

Let $G$ be a digraph, and let $\mathfrak{H} \subseteq \Aut(G)$. For $k \in \mathbb{N}$, we have $[t^k]\chi(G, \mathfrak{H}, t, \mathbf{x})$ is $M$-increasing and $[t^k]\chi(G, \mathfrak{H}, t, x)$ is effectively flawless.
\label{thm:flawless}
\end{theorem}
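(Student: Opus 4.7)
The plan is to treat the two halves of the theorem separately, with the digraph case reducing to the double poset case plus induction.

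For the double poset case, I would start from Theorem \ref{thm:coloring}, part (2), which identifies $[M_{\alpha}]\Omega(D,\mathfrak{G},\mathbf{x})$ with the character $\chi_{\alpha,D}$ of $\mathfrak{G}$ acting on the set $X_{\alpha,D}$ of $D$-set compositions of type $\alpha$. In particular, $\Omega(D,\mathfrak{G},\mathbf{x})$ is $M$-realizable. To upgrade this to $M$-increasing, I would feed into Proposition \ref{prop:thetaprops}: for $\alpha\leq\beta\models|N|$, the map $\theta_{\alpha,\beta}:V_{\alpha,D}\to V_{\beta,D}$ is an injective $\mathfrak{G}$-equivariant linear map, so $V_{\alpha,D}$ is isomorphic to a $\mathfrak{G}$-submodule of $V_{\beta,D}$, and a $\mathfrak{G}$-invariant complement exists over $\mathbb{C}$. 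Its character is $\chi_{\beta,D}-\chi_{\alpha,D}$, which is therefore effective. This is precisely the definition of $\Omega(D,\mathfrak{G},\mathbf{x})$ being $M$-increasing. With $M$-realizability and $M$-increasingness in hand, Proposition \ref{prop:global2}, part \ref{prop:ftohorbit}, immediately yields that $\Omega(D,\mathfrak{G},x)=\ps(\Omega(D,\mathfrak{G},\mathbf{x}))$ is effectively flawless.

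For the digraph case, I would invoke Lemma \ref{lem:fundamental2}, part (3): fixing a transversal $\mathcal{T}$ for the $\mathfrak{H}$-action on $\mathcal{A}(G)$,
\[
\chi(G,\mathfrak{H},t,\mathbf{x}) \;=\; \sum_{O\in\mathcal{T}} t^{\asc(O)}\,\Omega(P_O,\mathfrak{H}_O,\mathbf{x})\uparrow_{\mathfrak{H}_O}^{\mathfrak{H}}.
\]
Extracting the coefficient of $t^k$ gives a sum indexed by $\{O\in\mathcal{T}:\asc(O)=k\}$. By Proposition \ref{prop:global}, part (1), the $M$-coefficients commute with induction, so the $M$-coefficient of the $O$-term is $\chi_{\alpha,P_O}\uparrow_{\mathfrak{H}_O}^{\mathfrak{H}}$, hence a permutation character. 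This proves $M$-realizability of $[t^k]\chi(G,\mathfrak{H},t,\mathbf{x})$. For $M$-increasing, note that induction is an exact functor between representation categories, so the $\mathfrak{H}_O$-embedding $V_{\alpha,P_O}\hookrightarrow V_{\beta,P_O}$ coming from the double poset case induces to an $\mathfrak{H}$-embedding after applying $\uparrow_{\mathfrak{H}_O}^{\mathfrak{H}}$; summing over $O\in\mathcal{T}$ with $\asc(O)=k$ gives an $\mathfrak{H}$-embedding of the $M_\alpha$-module into the $M_\beta$-module for $[t^k]\chi$, so its character difference is effective. Then Proposition \ref{prop:global2}, part \ref{prop:ftohorbit}, again delivers effective flawlessness of $[t^k]\chi(G,\mathfrak{H},t,x)$.

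The only genuinely delicate point is verifying that the injections $\theta_{\alpha,\beta}$ really do produce $\mathfrak{G}$-invariant submodules whose complements carry effective characters, rather than just virtual ones; this is where working over $\mathbb{C}$ and Maschke's theorem to split off the complement matter. Everything else is bookkeeping that slots cleanly into the machinery already set up in Theorem \ref{thm:coloring}, Proposition \ref{prop:thetaprops}, Lemma \ref{lem:fundamental2}, and Proposition \ref{prop:global2}. I do not expect any computation beyond a couple of lines, since the whole point of the preceding propositions is to reduce this theorem to the construction of the embeddings $\theta_{\alpha,\beta}$ and the compatibility of induction with effectiveness.
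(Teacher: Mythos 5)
Your double poset half is essentially the paper's own argument: identify $[M_{\alpha}]\Omega(D,\mathfrak{G},\mathbf{x})$ with the character of $V_{\alpha,D}$ via Theorem \ref{thm:coloring}, use the injective $\mathfrak{G}$-equivariant maps $\theta_{\alpha,\beta}$ of Proposition \ref{prop:thetaprops} (plus complete reducibility over $\mathbb{C}$) to get $M$-increasing, and then apply Proposition \ref{prop:global2}\ref{prop:ftohorbit}. Where you genuinely diverge is the digraph half. The paper deduces it by a forward reference to Theorem \ref{thm:feffectivedigraph}: $[t^k]\chi(G,\mathfrak{H},t,\mathbf{x})$ is $F$-effective, hence $M$-increasing by the proposition that $F$-effective implies $M$-increasing, and that route ultimately leans on the Section \ref{sec:feffect} machinery for locally special double posets (the posets $P_O$ are locally special since $\leq_2$ is the reverse of $\leq_1$). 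You instead extract $[t^k]$ from Lemma \ref{lem:fundamental2}(3), use Proposition \ref{prop:global}(1) to commute $M$-coefficients with induction, and push the double-poset embeddings through $\uparrow_{\mathfrak{H}_O}^{\mathfrak{H}}$, using that induction sends effective characters to effective characters. This is correct and in some ways cleaner: it stays within the material already proved before Section \ref{sec:increasing}, needs no local specialness or $F$-basis input, and explicitly supplies the $M$-realizability hypothesis of Proposition \ref{prop:global2}\ref{prop:ftohorbit} (the $M$-coefficients are induced permutation characters), a point the paper's shorter argument passes over silently. What the paper's route buys instead is the stronger $F$-effectiveness statement itself, which it needs anyway for other corollaries; as a proof of Theorem \ref{thm:flawless} alone, your reduction is self-contained and valid.
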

\begin{proof}
Let $D$ be a double poset on a finite set $N$, and let $\mathfrak{G} \subseteq \Aut(D)$. For $\alpha \models |N|$, we see that $[M_{\alpha}] \Omega(D, \mathfrak{G}, \mathbf{x})$ is the character of the representation of $\mathfrak{G}$ on $V_{\alpha, D}$. Let $\alpha \leq \beta \models |N|$. Since $\theta_{\alpha, \beta}$ is injective and $\mathfrak{G}$-invariant, $V_{\alpha, D}$ is isomorphic to a submodule of $V_{\beta, D}$. Thus $[M_{\beta}] \Omega(D, \mathfrak{G}, \mathbf{x}) - [M_{\alpha}]\Omega(D, \mathfrak{G}, \mathbf{x})$ is the character corresponding to the complement of $\theta_{\alpha, \beta}(V_{\alpha, D})$ in $V_{\beta, D}$. Hence $\Omega(D, \mathfrak{G}, \mathbf{x})$ is $M$-increasing. 

Since $\Omega(D, \mathfrak{G}, \mathbf{x})$ is $M$-increasing and $\Omega(D, \mathfrak{G}, x)$ is the principal specialization of $\Omega(D, \mathfrak{G}, \mathbf{x})$, it follows from Proposition \ref{prop:global2} \ref{prop:ftohorbit} that $\Omega(D, \mathfrak{G}, x)$ is effectively flawless.

Let $G$ be a directed graph, and let $\mathfrak{H} \subseteq \Aut(G)$. By Theorem \ref{thm:feffectivedigraph}, we see that $[t^k]\chi(G, \mathfrak{H}, t, \mathbf{x}) $ is $F$-effective. Thus $[t^k]\chi(G, \mathfrak{H}, t, \mathbf{x}) $ is $M$-increasing. Since $\ps(\chi(G, \mathfrak{H}, t, \mathbf{x})) = \chi(G, \mathfrak{H}, t, x)$, it follows that $[t^k]\chi(G, \mathfrak{G}, t, x)$ is effectively flawless.
\end{proof}

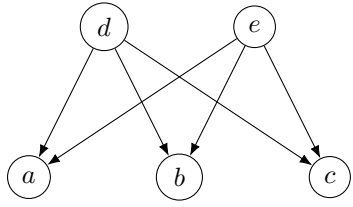
\begin{figure}
\begin{center}
\begin{tikzpicture}
  \node[circle, draw=black, fill=white] (b) at (1,0) {$b$};
  \node[circle, draw=black, fill=white] (d) at (0,2) {$d$};
  \node[circle, draw=black, fill=white] (a) at (-1,0) {$a$};
  \node[circle, draw=black, fill=white] (c) at (3,0) {$c$};
  \node[circle, draw=black, fill=white] (e) at (2,2) {$e$};
 \draw[-Latex] (d) -- (a);
  \draw[-Latex] (d) -- (b);
  \draw[-Latex] (e) edge (c);
  \draw[-Latex] (d) -- (c);
    \draw[-Latex] (e) -- (a);
    \draw[-Latex] (e) -- (b);

\end{tikzpicture}

\end{center}
\label{fig:notFincreasing}
\caption{A Hasse diagram.}
\end{figure}

Now we discuss counterexamples to extending Theorem \ref{thm:flawless} to studying coefficients in the $F$ basis and to studying the $h$-vector.
Let $D$ be the poset in Figure \ref{fig:notFincreasing}. We define $\leq_2$ to be the opposite partial order of  $\leq_1$. Hence $\Omega(D, x)$ counts strict $D$-partitions. We let $\mathfrak{G} = \mathbb{Z}_2 \times \mathbb{Z}_3$, viewed as the group $\langle (abc), (de) \rangle$. Let $\omega$ be a cubed root of unity. If we let $\sigma = (abc)$ and $\tau = (de)$, then Table \ref{table:character} is the character table for $\mathfrak{G}$:

\begin{table}
\begin{center}
$\begin{array}{c|cccccc}
& \{e\} & \{\sigma \} & \{\sigma^2 \} & \{\tau \} & \{\tau\sigma \} & \{\tau \sigma^2 \} \\
\hline
\chi_1 & 1 & 1 & 1 & 1 & 1 & 1\\
\chi_2 & 1 & \omega & \bar{\omega} & 1 & \omega & \bar{\omega} \\
\chi_3 & 1 & \bar{\omega} & \omega & 1 & \bar{\omega} & \omega \\
\chi_4 & 1 & 1 & 1 & -1 & -1 & -1\\
\chi_5 & 1 & \omega & \bar{\omega} & -1 & -\omega & -\bar{\omega} \\
\chi_6 & 1 & \bar{\omega} & \omega & -1 & -\bar{\omega} & -\omega
\end{array}$
\end{center}

\label{table:character}
\caption{The character table for $\mathbb{Z}_3 \times \mathbb{Z}_2$}
\end{table}

Then 
\begin{align*} \Omega(D, \mathfrak{G}, \mathbf{x}) & = \chi_1F_{3,2} + (\chi_2+\chi_3)(F_{1,2,2}+F_{2,1,2})+\chi_4F_{3,1,1} \\ & +(\chi_5+\chi_6)(F_{1,2,1,1}+F_{2,1,1,1}) + \chi_1 F_{1,1,1,2}+\chi_4F_{1,1,1,1,1}.\end{align*} 
Thus $\Omega(D, \mathfrak{G}, \mathbf{x})$ is \emph{not} $F$-increasing. We see that \[\sum_{m \geq 0} \Omega(D, \mathfrak{G}, m)t^m = \frac{\chi_1t^2+(2\chi_2+2\chi_3+\chi_4)t^3+(2\chi_5+2\chi_6+\chi_1)t^4+\chi_4t^5}{(1-t)^6}. \]
Then the $h$-vector is $(0, 0, \chi_1, 2\chi_2+2\chi_3+\chi_4, 2\chi_5+2\chi_6+\chi_1, \chi_4)$. We see that $h_2$ and $h_3$ are not comparable, so the $h$-vector is not strongly flawless.

Similarly, we can view the Hasse diagram in Figure \ref{fig:notFincreasing} as a digraph $G$ with $\mathfrak{G} = \mathbb{Z}_3 \times \mathbb{Z}_2$. Then $[t^0]\chi(G, \mathfrak{G}, \mathbf{x}) = \Omega(D, \mathfrak{G}, \mathbf{x})$. Thus we have an example of a directed graph where the chromatic quasisymmetric class function is not $F$-increasing, and where the $h$-vector of the chromatic polynomial class function is not effectively flawless.

\section{F-effectiveness and $h$-effectiveness}
\label{sec:feffect}
In this section, we state and prove several effectiveness theorems.
\begin{theorem}
Let $D$ be a locally special double poset on a finite set $N$, and let $\mathfrak{G} \subseteq \Aut(D)$. Then $\chiex{D}$ is $F$-effective.
\label{thm:dblposfeffect}
\end{theorem}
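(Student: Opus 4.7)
The plan is to realize, for each integer composition $\beta \models |N|$, the coefficient $[F_\beta]\Omega(D,\mathfrak{G},\mathbf{x})$ as the character of an honest $\mathfrak{G}$-representation, namely a $\mathfrak{G}$-invariant complement in $V_{\beta, D} = \mathbb{C}\{X_{\beta, D}\}$ of an explicit submodule built from the $\theta$-maps of Proposition \ref{prop:thetaprops}. Starting from $\Omega(D,\mathfrak{G},\mathbf{x}) = \sum_\gamma \chi_{\gamma, D}M_\gamma$ (Theorem \ref{thm:coloring}) and combining with $F_\beta = \sum_{\delta\geq\beta}M_\delta$, Möbius inversion over the Boolean lattice of compositions below $\beta$ yields the virtual identity
\[[F_\beta]\Omega(D,\mathfrak{G},\mathbf{x}) = \sum_{\gamma\leq\beta}(-1)^{\ell(\beta)-\ell(\gamma)}\chi_{\gamma, D}.\]
Define the $\mathfrak{G}$-invariant submodule $V^{\mathrm{str}}_{\beta, D} := \sum_{\gamma\prec\beta}\theta_{\gamma,\beta}(V_{\gamma, D})$, where the sum runs over compositions covered by $\beta$. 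In characteristic zero a $\mathfrak{G}$-invariant complement $W_\beta \subseteq V_{\beta, D}$ exists by Maschke, and since $W_\beta$ is then an actual $\mathfrak{G}$-module it suffices to show that its character equals the Möbius sum above, which reduces to proving
\[\chi(V^{\mathrm{str}}_{\beta, D}) = \sum_{\gamma\prec\beta}(-1)^{\ell(\beta)-\ell(\gamma)+1}\chi_{\gamma, D}.\]

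This final identity is an inclusion–exclusion statement in the lattice of subspaces $\{\theta_{\gamma,\beta}(V_{\gamma, D}):\gamma\leq\beta\}$. The characterization extracted from the proof of Proposition \ref{prop:thetaprops}---that $v\in\theta_{\gamma,\beta}(V_{\gamma, D})$ if and only if the coefficient of $C\in X_{\beta, D}$ in $v$ depends only on the coarsening $C_\gamma(C)$---immediately gives the meet identity $\theta_{\gamma,\beta}(V_{\gamma, D})\cap\theta_{\gamma',\beta}(V_{\gamma', D}) = \theta_{\gamma\wedge\gamma',\beta}(V_{\gamma\wedge\gamma', D})$ for $\gamma,\gamma'\leq\beta$, supplying one half of the inclusion–exclusion setup.

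The main obstacle is that this lattice of subspaces is not distributive in general: the subspace sum $\theta_\gamma(V_\gamma)+\theta_{\gamma'}(V_{\gamma'})$ is strictly contained in $\theta_{\gamma\vee\gamma'}(V_{\gamma\vee\gamma'})$ already in simple examples (e.g.\ when $D$ is an antichain), so the alternating-sum formula for $\chi(V^{\mathrm{str}}_{\beta, D})$ does not follow from Boolean combinatorics alone. The hard part will be to use the locally special hypothesis through the $D$-compatible linear order $\ell$ from Lemma \ref{lem:compatible} and the rigidity of increasing linear extensions from Proposition \ref{prop:increasing} in order to construct $W_\beta$ explicitly, rather than through formal inclusion–exclusion. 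Concretely, I expect the proof to produce, for each $\leq_1$-linear extension $\pi\in\mathcal{L}(D)$ whose $\ell$-descent composition equals $\beta$, a distinguished class $[\pi]\in V_{\beta, D}/V^{\mathrm{str}}_{\beta, D}$, and to show that these classes form a basis of the quotient on which the $\mathfrak{G}$-action realizes the Möbius alternating character. This construction would simultaneously lift the non-equivariant Grinberg decomposition $\Omega(D,\mathbf{x}) = \sum_{\pi\in\mathcal{L}(D)}F_{\alpha(\pi)}$ (with $\alpha(\pi)$ the $\ell$-descent composition) to the equivariant setting, yielding $\chi(W_\beta) = [F_\beta]\Omega(D,\mathfrak{G},\mathbf{x})$ and hence $F$-effectiveness.
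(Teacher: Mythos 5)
Your framework coincides with the paper's (the modules $V_{\beta,D}$, the maps $\theta_{\gamma,\beta}$, M\"obius inversion, and a complement $W_\beta$ whose character should be $[F_\beta]\Omega(D,\mathfrak{G},\mathbf{x})$), but there are two genuine gaps. First, the meet identity $\theta_{\gamma,\beta}(V_{\gamma,D})\cap\theta_{\gamma',\beta}(V_{\gamma',D})=\theta_{\gamma\wedge\gamma',\beta}(V_{\gamma\wedge\gamma',D})$ is not ``immediate.'' Your characterization of the image is incomplete: a vector lies in $\theta_{\gamma,\beta}(V_{\gamma,D})$ only if its coefficient at $C$ depends solely on $C_{\gamma}(C)$ \emph{and} vanishes whenever $C_{\gamma}(C)\notin X_{\gamma,D}$ (coarsening a $D$-set composition can merge an inversion pair into one block, so the coarsening need not be a $D$-set composition). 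Even with the corrected characterization, knowing that the coefficient depends only on $C_{\alpha}(C)$ and only on $C_{\gamma}(C)$ does not formally give dependence only on $C_{\alpha\wedge\gamma}(C)$: one must connect any two elements of $X_{\beta,D}$ with the same $(\alpha\wedge\gamma)$-coarsening by moves that stay inside $X_{\beta,D}$ and preserve one of the two coarsenings at each step. That connectivity statement is exactly the content of the paper's Proposition \ref{prop:dblposinter} and Lemma \ref{lem:needed}, proved via the alternating coarsen-and-lexicographically-refine sequence $C^k$, and it is precisely where the locally special hypothesis enters (through the $D$-compatible order of Lemma \ref{lem:compatible} and the rigidity of increasing extensions in Proposition \ref{prop:increasing}). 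Since the theorem is false without local specialness (the first example after Theorem \ref{thm:coloring} has $[F_{1,1,1,1}]\Omega=-\sgn$), any step claimed to be hypothesis-free and formal should be treated with suspicion.

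Second, the part you defer (``the hard part will be \ldots I expect the proof to produce \ldots'') is the remainder of the actual proof, so the proposal stops short of proving the theorem. The paper does not construct distinguished classes indexed by linear extensions with prescribed descent composition; instead, after the meet identity it shows $\theta_{\alpha,\beta}(W_{\alpha,D})\cap\theta_{\gamma,\beta}(W_{\gamma,D})=\{\vec{0}\}$ for $\alpha\neq\gamma$, and then proves by induction on $\ell(\alpha)$ the decomposition $V_{\alpha,D}=\bigoplus_{\beta\leq\alpha}\theta_{\beta,\alpha}(W_{\beta,D})$, from which $\chi_{\alpha,D}=\sum_{\beta\leq\alpha}\chi(W_{\beta,D})$ and hence $[F_\beta]\Omega(D,\mathfrak{G},\mathbf{x})=\chi(W_{\beta,D})$ follow; no explicit basis of the quotient by $V^{\mathrm{str}}_{\beta,D}$ is needed. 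Your linear-extension approach may well be workable (it would equivariantly refine Grinberg's $F$-expansion), but as written it is an expectation, not an argument: you would still have to prove that the proposed classes span, are independent modulo $V^{\mathrm{str}}_{\beta,D}$, and carry the right character, and none of that is easier than the intersection and direct-sum analysis the paper carries out.
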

We prove Theorem \ref{thm:dblposfeffect} in the next subsection.
First, we focus on corollaries and other related theorems. First, we obtain the follow result from Proposition \ref{prop:global} \ref{prop:charpositive}.
\begin{corollary}
Let $D$ be a locally special double poset on a finite set $N$, and let $\mathfrak{G} \subseteq \Aut(D)$. Given an irreducible character $\psi$, we have $\langle \psi, \chih \rangle$ is $F$-positive.
\label{cor:characterfpos}
\end{corollary}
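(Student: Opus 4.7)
The plan is to derive this corollary as an immediate consequence of two results already established in the paper: Theorem \ref{thm:dblposfeffect}, which asserts that $\chih$ is $F$-effective, and Proposition \ref{prop:global}, part \ref{prop:charpositive}, which says that pairing an irreducible character $\psi$ against any $\mathbf{B}$-effective element of $C(\mathfrak{G}, R)$ yields a $\mathbf{B}$-positive element of $R$.

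First, I would apply Theorem \ref{thm:dblposfeffect} to the locally special double poset $D$ with group $\mathfrak{G} \subseteq \Aut(D)$ to conclude that $\chih \in \qrep$ is $F$-effective. By definition, this means if we expand
\[ \chih = \sum_{\alpha \models |N|} \psi_\alpha F_\alpha, \]
then each class function $\psi_\alpha = [F_\alpha]\chih$ is an effective character of $\mathfrak{G}$. Next, I would take $\mathbf{B}$ to be the fundamental quasisymmetric function basis $\{F_\alpha\}_{\alpha}$ of $R = \qsym$ and invoke Proposition \ref{prop:global} \ref{prop:charpositive}: for any irreducible character $\psi$ of $\mathfrak{G}$,
\[ [F_\alpha] \langle \psi, \chih \rangle = \langle \psi, [F_\alpha]\chih \rangle = \langle \psi, \psi_\alpha \rangle \geq 0, \]
since the usual inner product of an irreducible character with an effective character is a nonnegative integer (it counts the multiplicity of $\psi$ in the decomposition of $\psi_\alpha$). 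Therefore every coefficient of $\langle \psi, \chih \rangle$ in the $F$-basis is nonnegative, i.e.\ $\langle \psi, \chih \rangle$ is $F$-positive.

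There is essentially no obstacle here beyond correctly identifying the inputs: the corollary is a direct specialization of Proposition \ref{prop:global} \ref{prop:charpositive} to the case $\chi = \chih$ and $\mathbf{B} = \{F_\alpha\}$, with $F$-effectiveness supplied by Theorem \ref{thm:dblposfeffect}. The only thing one might want to remark on is that the statement makes sense because $\langle \psi, \chih \rangle$ lies in $\qsym$ (the inner product over $\mathfrak{G}$ is taken coefficient-wise, as formalized in part (2) of Proposition \ref{prop:global}), so asking for $F$-positivity is meaningful.
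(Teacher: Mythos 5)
Your proposal is correct and follows exactly the paper's route: the corollary is stated as an immediate consequence of Theorem \ref{thm:dblposfeffect} ($F$-effectiveness of $\chih$) together with Proposition \ref{prop:global} \ref{prop:charpositive}, which is precisely the two-step derivation you give. The coefficientwise justification $[F_\alpha]\langle \psi, \chih\rangle = \langle \psi, [F_\alpha]\chih\rangle \geq 0$ matches the argument the paper uses for that part of Proposition \ref{prop:global}, so there is nothing to add.
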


We also obtain the following theorem for the chromatic quasisymmetric class function of a digraph.
\begin{theorem}
Let $G$ be a digraph, and let $\mathfrak{G} \subseteq \Aut(G)$. For any $k \in \mathbb{N}$, we have $[t^k]\chi(G, \mathfrak{G}, \mathbf{x})$ is $F$-effective.

For any irreducible character $\psi$ of $\mathfrak{G}$, we have $\langle \psi, [t^k] \chi(G, \mathfrak{G}, \mathbf{x}) \rangle$ is $F$-positive.
\label{thm:feffectivedigraph}
\end{theorem}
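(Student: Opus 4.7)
The plan is to reduce Theorem~\ref{thm:feffectivedigraph} to Theorem~\ref{thm:dblposfeffect} via the decomposition of the chromatic quasisymmetric class function into induced double-poset partition enumerators supplied by Lemma~\ref{lem:fundamental2}. Fixing a transversal $\mathcal{T}$ for the action of $\mathfrak{G}$ on $\mathcal{A}(G)$, Lemma~\ref{lem:fundamental2} gives
\[
\chi(G, \mathfrak{G}, t, \mathbf{x}) = \sum_{O \in \mathcal{T}} t^{\asc(O)}\, \Omega(P_O, \mathfrak{G}_O, \mathbf{x})\uparrow_{\mathfrak{G}_O}^{\mathfrak{G}},
\]
so
\[
[t^k]\chi(G, \mathfrak{G}, \mathbf{x}) \;=\; \sum_{\substack{O \in \mathcal{T}\\ \asc(O)=k}} \Omega(P_O, \mathfrak{G}_O, \mathbf{x})\uparrow_{\mathfrak{G}_O}^{\mathfrak{G}}.
\]
Since sums of $F$-effective class functions are $F$-effective, it suffices to show that each summand is $F$-effective.

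The crucial observation is that every $P_O$ is automatically locally special. Indeed, by the definition of $P_O$, the second order $\leq_2$ is the opposite of $\leq_1$; hence any two $\leq_1$-comparable elements (in particular, any $\leq_1$-covering pair) are $\leq_2$-comparable. Moreover the stabilizer $\mathfrak{G}_O$ is contained in $\Aut(P_O)$, since any $\mathfrak{g} \in \mathfrak{G}$ fixing $O$ preserves directed paths in $O$. Applying Theorem~\ref{thm:dblposfeffect} to the pair $(P_O, \mathfrak{G}_O)$, we conclude that $\Omega(P_O, \mathfrak{G}_O, \mathbf{x})$ is $F$-effective.

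To pass from $\mathfrak{G}_O$ to $\mathfrak{G}$, I invoke Proposition~\ref{prop:global}(1), which yields
\[
[F_\alpha]\!\left(\Omega(P_O, \mathfrak{G}_O, \mathbf{x})\uparrow_{\mathfrak{G}_O}^{\mathfrak{G}}\right) \;=\; \left([F_\alpha]\Omega(P_O, \mathfrak{G}_O, \mathbf{x})\right)\uparrow_{\mathfrak{G}_O}^{\mathfrak{G}}
\]
for every composition $\alpha \models |N|$. Since induction carries effective characters of $\mathfrak{G}_O$ to effective characters of $\mathfrak{G}$, the induced class function remains $F$-effective. Summing over the orientations in $\mathcal{T}$ with $\asc(O)=k$ then establishes the first claim. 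The second claim is immediate from Proposition~\ref{prop:global}\ref{prop:charpositive}: once $[t^k]\chi(G, \mathfrak{G}, \mathbf{x})$ is known to be $F$-effective, its inner product with any irreducible character $\psi$ is $F$-positive.

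No step here is really the main obstacle; the entire argument is a transfer from the double-poset result. The only points worth verifying carefully are that $P_O$ is locally special (which is trivial because $\leq_2$ is the dual of $\leq_1$) and that $\mathfrak{G}_O \subseteq \Aut(P_O)$ so that Theorem~\ref{thm:dblposfeffect} is legitimately applicable. All the substantive combinatorial work is absorbed into Theorem~\ref{thm:dblposfeffect} and into the identity of Lemma~\ref{lem:fundamental2}.
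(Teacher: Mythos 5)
Your proposal is correct and follows essentially the same route as the paper: decompose $\chi(G,\mathfrak{G},t,\mathbf{x})$ via Lemma \ref{lem:fundamental2} over a transversal of acyclic orientations, apply Theorem \ref{thm:dblposfeffect} to each $\Omega(P_O,\mathfrak{G}_O,\mathbf{x})$, and use that induction (coefficientwise, via Proposition \ref{prop:global}) preserves effectiveness, with the second claim coming from Proposition \ref{prop:global}\ref{prop:charpositive}. The only difference is that you spell out explicitly that $P_O$ is locally special and $\mathfrak{G}_O \subseteq \Aut(P_O)$, points the paper leaves implicit.
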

The second statement follows from applying Proposition \ref{prop:global} \ref{prop:charpositive} to the first statement, so we focus on proving the first statement.
\begin{proof}
Let $\mathcal{T}$ be a transversal for the action of $\mathfrak{G}$ on $\mathcal{A}(G)$.
For each acyclic orientation $O \in \mathcal{T}$, we write $\Omega(P_O, \mathfrak{G}_O, \mathbf{x}) = \sum\limits_{\alpha \models |N|} \psi_{\alpha, P_O} F_{\alpha}$ where the characters $\psi_{\alpha, P_O}$ are effective.
Using Lemma \ref{lem:fundamental2}, we have \begin{align*} \chi(G, \mathfrak{G}, t, \mathbf{x}) & = \sum\limits_{O \in \mathcal{T}} t^{\asc(O)}  \Omega(P_O, \mathfrak{G}_{O}, \mathbf{x})\uparrow_{\mathfrak{G}_{O}}^{\mathfrak{G}} \\ 
 & \sum\limits_{O \in \mathcal{T}} t^{\asc(O)}\left(\sum\limits_{\alpha \models |N|} \psi_{\alpha, P_O} F_{\alpha}\right)\uparrow_{\mathfrak{G}_{O}}^{\mathfrak{G}} \\
 & =
 \sum\limits_{\alpha \models |N|} \left(\sum\limits_{O \in \mathcal{T}} t^{\asc(O)}\psi_{\alpha,P_O} \uparrow_{\mathfrak{G}_{O}}^{\mathfrak{G}}\right) F_{\alpha}. \end{align*}
\end{proof}

We also deduce results about $h$-effectiveness by applying Proposition \ref{prop:global2} \ref{prop:ftoheffect}.
\begin{corollary}
Let $D$ be a locally special double poset on a finite set $N$, and let $\mathfrak{G} \subseteq \Aut(D).$ Then $\Omega(D, \mathfrak{G}, x)$ is $h$-effective.

Let $G$ be a directed graph. Then $\chi(G, \mathfrak{G}, t, x)$ is $h$-effective.
\end{corollary}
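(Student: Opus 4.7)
The plan is to deduce both assertions as immediate consequences of the $F$-effectiveness results already in hand, namely Theorem \ref{thm:dblposfeffect} and Theorem \ref{thm:feffectivedigraph}, combined with the transfer principle recorded in Proposition \ref{prop:global2} \ref{prop:ftoheffect}. Essentially the work was done upstream; all that remains is to push $F$-effectiveness through principal specialization and, in the digraph case, to keep track of the auxiliary variable $t$.

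For the double poset claim I would first invoke Theorem \ref{thm:dblposfeffect} to expand
\[\Omega(D, \mathfrak{G}, \mathbf{x}) = \sum_{\alpha \models |N|} \psi_{\alpha} F_{\alpha},\]
where each $\psi_\alpha$ is an effective character of $\mathfrak{G}$. Since $\Omega(D, \mathfrak{G}, x)$ is, by definition, the principal specialization of $\Omega(D, \mathfrak{G}, \mathbf{x})$ evaluated class-function-wise, Proposition \ref{prop:global2} \ref{prop:ftoheffect} identifies the equivariant $h$-vector as $h_i = \sum_{\alpha : \ell(\alpha) = i} \psi_\alpha$. Each $h_i$ is then a sum of effective characters and therefore effective, which is precisely $h$-effectiveness.

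For the digraph claim, the variable $t$ adds only bookkeeping. Theorem \ref{thm:feffectivedigraph} guarantees that for every $k \in \mathbb{N}$ the coefficient $[t^k]\chi(G, \mathfrak{G}, \mathbf{x})$ is $F$-effective, so one can expand
\[\chi(G, \mathfrak{G}, t, \mathbf{x}) = \sum_{\alpha \models |N|} \psi_{\alpha}(t)\, F_\alpha,\]
where every $\psi_\alpha(t) \in C(\mathfrak{G})[t]$ has the property that each of its $t$-coefficients is an effective character. Principal specialization commutes with extraction of $t$-coefficients, so applying Proposition \ref{prop:global2} \ref{prop:ftoheffect} to $[t^k]\chi(G, \mathfrak{G}, \mathbf{x})$ for each $k$ and reassembling in $t$ shows that the entries of the equivariant $h$-polynomial are $h_i(t) = \sum_{\alpha : \ell(\alpha) = i} \psi_\alpha(t)$, each a $t$-polynomial whose coefficients are sums of effective characters.

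No real obstacle arises, since both parts are a direct chaining of a prior theorem with Proposition \ref{prop:global2} \ref{prop:ftoheffect}. The only minor point of care is that Proposition \ref{prop:global2} is formulated for a single quasisymmetric function rather than for a $t$-parameter family; for the digraph case this is handled by the observation that the $F$-to-$h$ conversion is $\mathbb{C}[t]$-linear, so one can apply the proposition coefficient-by-coefficient in $t$ without any additional input.
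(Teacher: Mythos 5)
Your proposal is correct and follows essentially the same route as the paper: the paper deduces this corollary directly by combining Theorem \ref{thm:dblposfeffect} (respectively Theorem \ref{thm:feffectivedigraph}, applied to each coefficient $[t^k]\chi(G,\mathfrak{G},\mathbf{x})$) with Proposition \ref{prop:global2} \ref{prop:ftoheffect}, exactly as you do. Your remark that the $F$-to-$h$ conversion can be applied coefficient-by-coefficient in $t$ is the same implicit bookkeeping the paper relies on for the digraph case.
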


\subsection{Proof of Theorem \ref{thm:dblposfeffect}}

For $\alpha \leq \beta \models |N|$, define $\theta_{\alpha, \beta}$ by Equation \eqref{eq:theta}.
Our first step is to prove the following proposition.
\begin{proposition}
Let $D$ be a double poset on a finite set $N$. Let $\alpha \leq \beta \models |N|$ and $\gamma \leq \beta$. Then we have $\theta_{\alpha, \beta}(V_{\alpha, D}) \cap \theta_{\gamma, \beta}(V_{\gamma, D}) = \theta_{\alpha \wedge \gamma, \beta}(V_{\alpha \wedge \gamma, D})$. 
\label{prop:dblposinter}
\end{proposition}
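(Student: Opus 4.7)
The inclusion $\supseteq$ follows immediately from Proposition~\ref{prop:thetaprops}: since $\alpha \wedge \gamma \leq \alpha \leq \beta$, we have $\theta_{\alpha,\beta} \circ \theta_{\alpha \wedge \gamma, \alpha} = \theta_{\alpha \wedge \gamma, \beta}$, so the image of $\theta_{\alpha \wedge \gamma, \beta}$ lies inside the image of $\theta_{\alpha,\beta}$; the symmetric chain through $\gamma$ supplies the other containment. The substance of the proposition lies in the reverse inclusion.

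For $\subseteq$, fix $v = \sum_{C' \in X_{\beta,D}} c_{C'} C'$ in the intersection. Since each $C' \in X_{\beta,D}$ admits a unique coarsening $C_\alpha(C')$ of type $\alpha$, unwinding the definition of $\theta_{\alpha,\beta}$ shows that $v \in \theta_{\alpha,\beta}(V_{\alpha,D})$ is equivalent to the two conditions $(A_\alpha)$ $c_{C'} = 0$ whenever $C_\alpha(C') \notin X_{\alpha,D}$, and $(B_\alpha)$ $c_{C'_1} = c_{C'_2}$ whenever $C_\alpha(C'_1) = C_\alpha(C'_2) \in X_{\alpha,D}$; the analogues $(A_\gamma)$, $(B_\gamma)$ hold as well. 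The plan is to deduce $(A_{\alpha \wedge \gamma})$ and $(B_{\alpha \wedge \gamma})$, for then the common value $b_{C^*}$ of $c_{C'}$ on each fiber $F^{\alpha \wedge \gamma}_{C^*} := \{C' \in X_{\beta,D} : C_{\alpha \wedge \gamma}(C') = C^*\}$, as $C^*$ ranges over $X_{\alpha \wedge \gamma, D}$, defines $u := \sum_{C^*} b_{C^*} C^* \in V_{\alpha \wedge \gamma, D}$ with $\theta_{\alpha \wedge \gamma, \beta}(u) = v$. A useful preliminary: whenever $C_{\alpha \wedge \gamma}(C') \in X_{\alpha \wedge \gamma, D}$ one automatically has $C_\alpha(C') \in X_{\alpha,D}$ and $C_\gamma(C') \in X_{\gamma,D}$, because each block of $C_\alpha(C')$ sits inside a block of $C_{\alpha \wedge \gamma}(C')$ (as $\alpha \wedge \gamma \leq \alpha$) and so inherits its no-inversion property, while initial unions coincide with those of $C'$.

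Both $(A_{\alpha \wedge \gamma})$ and $(B_{\alpha \wedge \gamma})$ reduce to a single combinatorial connectivity statement. Fixing $C^* = C_{\alpha \wedge \gamma}(C')$ and decomposing $F^{\alpha \wedge \gamma}_{C^*}$ block by block along $C^*$, the question localizes to a single block $B$ of $C^*$ where $S(\alpha|_B) \cap S(\gamma|_B) = S((\alpha \wedge \gamma)|_B) = \emptyset$. Every cut position $i$ inside $B$ therefore fails to be a cut of at least one of $\alpha|_B, \gamma|_B$, and the corresponding adjacent transposition of $\beta|_B$-blocks at $i$ lies in a common $\alpha|_B$- or $\gamma|_B$-fiber; such moves stay inside $X_{\beta,D}$ automatically, since the fibers are by definition subsets of $X_{\beta,D}$, and they also preserve the $\alpha \wedge \gamma$-coarsening. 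Iterating these moves, I aim to show: (i) when $C^* \in X_{\alpha \wedge \gamma, D}$, any two elements of $F^{\alpha \wedge \gamma}_{C^*}$ are linked by a zigzag of $\alpha$- and $\gamma$-fiber equivalences, so repeated use of $(B_\alpha)$ and $(B_\gamma)$ yields $(B_{\alpha \wedge \gamma})$; (ii) when $C^* \notin X_{\alpha \wedge \gamma, D}$, an inversion pair $(x,y)$ in some block of $C^*$ is necessarily split in both $C_\alpha(C')$ and $C_\gamma(C')$, and the same kind of zigzag can shuttle $x$ and $y$ into a common block of $C_\alpha(C'')$ for some $C'' \in F^{\alpha \wedge \gamma}_{C^*}$. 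Then $C_\alpha(C'') \notin X_{\alpha,D}$, so $(A_\alpha)$ forces $c_{C''} = 0$, and propagating zero back along the zigzag via $(B_\alpha)$ and $(B_\gamma)$ gives $c_{C'} = 0$, i.e.\ $(A_{\alpha \wedge \gamma})$.

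The principal obstacle is proving the connectivity lemma above---transitivity of the $\alpha|_B$- and $\gamma|_B$-fiber action on the set of $D|_B$-set compositions of $B$ of type $\beta|_B$, and the ability to route $x$ and $y$ into a common $\alpha$-block. I expect to prove both by induction on the positional distance between the elements being moved, crossing one cut at a time using whichever of the two fibers does not cut at that position; the $D$-set composition property is preserved throughout because the fibers by definition lie in $X_{\beta,D}$. With the lemma in hand, $u$ as defined above witnesses $v \in \theta_{\alpha \wedge \gamma, \beta}(V_{\alpha \wedge \gamma, D})$, completing the proof.
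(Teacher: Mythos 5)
Your reduction is sound as far as it goes: identifying membership in $\theta_{\alpha,\beta}(V_{\alpha,D})$ with the coefficient conditions $(A_\alpha)$, $(B_\alpha)$, noting that $\alpha$-fibers sit inside $\alpha\wedge\gamma$-fibers, and observing that a proper $C^*$ forces proper $\alpha$- and $\gamma$-coarsenings throughout its fiber are all correct, and indeed the proposition is \emph{equivalent} to your claims (i) and (ii). But the proof stops exactly where the difficulty starts: (i) and (ii) are only announced (``I aim to show\dots'', ``I expect to prove\dots''), and the proposed induction ``crossing one cut at a time using whichever of the two fibers does not cut at that position'' ignores that every intermediate composition must remain in $X_{\beta,D}$: an interchange across a cut can be blocked by the $\leq_1$-order-ideal condition, so the moves available inside an $\alpha$- or $\gamma$-fiber are far from free and the connectivity assertion is a genuine theorem, not a routine bubble-sort.

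More decisively, claim (ii) is false at the level of generality you are working in, so no argument along your lines can be completed without an extra hypothesis. Take $N=\{x,z,y\}$ with $\leq_1$ the chain $x<_1 z<_1 y$ and $\leq_2$ consisting of the single relation $y<_2 x$ (so $(x,y)$ is an inversion, but neither $\leq_1$-cover is $\leq_2$-comparable: $D$ is not locally special). Let $\beta=(1,1,1)$, $\alpha=(1,2)$, $\gamma=(2,1)$, so $\alpha\wedge\gamma=(3)$. Then $X_{\beta,D}=\{x|z|y\}$, $X_{\alpha,D}=\{x|zy\}$, $X_{\gamma,D}=\{xz|y\}$, and since the other refinements $x|y|z$ and $z|x|y$ violate the order-ideal condition, $\theta_{\alpha,\beta}(x|zy)=x|z|y=\theta_{\gamma,\beta}(xz|y)$; hence the intersection on the left is one-dimensional, while $X_{(3),D}=\emptyset$ (the single block contains the inversion $(x,y)$), so the right side is $\{\vec 0\}$. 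In particular the unique fiber element has proper $\alpha$- and $\gamma$-coarsenings, so your case-(ii) propagation of zeros never gets started. The moral is that the locally special hypothesis, which the printed statement omits but which the paper's proof uses from the first line (it fixes a $D$-compatible order via Lemma \ref{lem:compatible}, runs the alternating coarsen-and-refine sequence $C^k$, and invokes Proposition \ref{prop:increasing}, which rests on Lemma \ref{lem:inversion-to-descent}), is essential precisely for your case (ii); since your argument never invokes it, it cannot be repaired as written. A correct proof must either follow the paper's canonical-representative mechanism or build the locally special property into the routing step of your zigzag.
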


Fix a $D$-compatible linear order $\ell$, which exists by Lemma \ref{lem:compatible}.
Let $\alpha \leq \beta$ and $\gamma \leq \beta$. Fix $\vec{v} \in \theta_{\alpha, \beta}(V_{\alpha, D}) \cap \theta_{\gamma, \beta}(V_{\gamma, D}).$ Consider $C \in X_{\beta, D} \cap \theta_{\alpha, \beta}(V_{\alpha, D})$. Then $C_{\alpha}(C) \in X_{\alpha, D}$. Let $C^1 = C_{\beta}(\ell(C_{\alpha}(C))).$ We see that $C_{\alpha}(C) \leq C^1$, so if $C^1$ contained any inversions, then so does $C_{\alpha}(C)$. Since $C^1 \leq \ell(C_{\alpha}(C))$, we know that the union of the first several blocks of $C^1$ is the union of the first several blocks of $\ell(C_{\alpha}(C)).$ Thus $C^1$ is a $D$-set composition.

Since $C_{\alpha}(C) \leq C$ and $C^1 \leq \ell(C_{\alpha}(C))$, we have $\ell(C_1) \leq_{\ell} \ell(C_{\alpha}(C)) <_{\ell} \ell(C)$. Thus $C^1 \leq_{\ell} C$. Moreover, since $\vec{v} \in \theta_{\alpha, \beta}(V_{\alpha, D})$, we know $\vec{v} = \theta_{\alpha, \beta}(\vec{w})$ for some $\vec{w} \in V_{\alpha, D}$. Since $C_{\alpha}(C) \leq C^1$, it follows from the definition of $\theta_{\alpha, \beta}$  that $[C^1]\vec{v} = [C_{\alpha}(C)]\vec{w}$. Similarly, since $C_{\alpha}(C) \leq C$, we have $[C]\vec{v} = [C_{\alpha}(C)]\vec{w}$. Thus $[C^1]\vec{v} = [C]\vec{v}$. 

Let $C^2 = C_{\beta}(\ell(C_{\gamma}(C^1))).$ By similar reasoning, $C^2 \leq_{\ell} C^1$ and $[C^2]\vec{v} = [C^1]\vec{v} = [C]\vec{v}$.
For every integer $k$, we define 
\[ C^{k+1} = \begin{cases}
 C_{\beta}(\ell(C_{\alpha}(C^k))) & k \mbox{ is even} \\
 C_{\beta}(\ell(C_{\gamma}(C^k))) & k \mbox{ is odd}
\end{cases} 
\]
Thus we obtain a sequence of compositions $C^k$ such that $C^{k+1} \leq_{\ell} C^k$ and $[C^k]\vec{v} = [C]\vec{v}$ for all $k$. 

Let $m$ be the first integer where $C^{m+1} = C^m$. Define $f(C) = C_{\alpha \wedge \gamma}(\ell(C^m))$. 
\begin{lemma}
Let $\vec{v} \in \theta_{\alpha, \beta}(V_{\alpha, D}) \cap \theta_{\gamma, \beta}(V_{\gamma, D}).$ Consider $C \in X_{\beta, D}$. Fix $f(C)$ as defined above. Then $f(C) \in X_{\alpha \wedge \gamma, D}$.

Moreover, given $D$ such that $D \geq f(C)$, we have $f(D) = f(C).$
\label{lem:needed}
\end{lemma}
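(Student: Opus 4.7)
The plan is to verify the two assertions of the lemma by exploiting the $D$-compatibility of $\ell$ obtained from Lemma~\ref{lem:compatible} together with the monotone decrease $C^{k+1} \leq_{\ell} C^k$ already established before the statement.

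For the first assertion, I would show by induction on $k$ that each iterate $C^k$ lies in $X_{\beta, D}$. The base case $C^0 = C$ is given. For the inductive step, assume $C^k \in X_{\beta, D}$ and consider $C^{k+1} = C_{\beta}(\ell(C_{\alpha}(C^k)))$ (the $\gamma$-case is symmetric). The prefix unions of $C^{k+1}$ are refinements of prefix unions of $C_{\alpha}(C^k)$, which are prefix unions of $C^k$, hence $\leq_1$-order ideals. For inversion-freeness of each block of $C^{k+1}$, the block consists of consecutive elements listed in $\ell$-increasing order within a single block of $C_{\alpha}(C^k)$; any inversion inside would, by Lemma~\ref{lem:inversion-to-descent}, produce a descent pair in the corresponding order-ideal difference, and Proposition~\ref{prop:increasing} combined with the $D$-compatibility of $\ell$ would contradict the increasing arrangement. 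Once $C^m \in X_{\beta, D}$, the same analysis applied to $C_{\alpha \wedge \gamma}(\ell(C^m))$ yields $f(C) \in X_{\alpha \wedge \gamma, D}$: its prefix unions are order ideals (inherited from $\ell(C^m)$) and its blocks are inversion-free (as they are contained in blocks of $C^m$).

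For the second assertion, the key is to characterize $C^m$ intrinsically. The stabilization $C^{m+1} = C^m$, together with iterating a bounded number of further steps until both the $\alpha$- and $\gamma$-operations fix $C^m$, singles out $C^m$ as the unique $\leq_{\ell}$-minimal $\beta$-set composition in $X_{\beta, D}$ refining $f(C)$ that is simultaneously fixed by both operations. Given $E \in X_{\beta, D}$ with $E \geq f(C)$, I would argue that the sequence $E^0, E^1, \ldots$ obtained from the same iterative construction also strictly decreases in $\leq_{\ell}$ until it stabilizes, that each $E^k$ continues to refine $f(C)$ (so the iteration stays inside the interval of $\beta$-set compositions above $f(C)$), and that the resulting fixed point must therefore coincide with $C^m$. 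Taking the $\alpha \wedge \gamma$-coarsening of $\ell(C^m)$ then gives $f(E) = f(C)$.

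The main obstacle will be making the convergence-to-the-same-fixed-point argument rigorous: one must establish that the iteration acts as a retraction onto a single stable element of $X_{\beta, D}$ refining $f(C)$, and that the retract is independent of the starting point $E \geq f(C)$. This in turn requires verifying that the terminal $C^m$ is fixed by both the $\alpha$- and $\gamma$-steps (not merely by the single step that first triggers $C^{m+1} = C^m$), which should follow by iterating a few more times, using that the sequence is strictly decreasing in $\leq_{\ell}$ until stabilization and that the alternating parity forces both operations to eventually appear as the stabilizing step.
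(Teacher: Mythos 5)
Your first paragraph breaks down at exactly the point where the lemma has content. You assert that the blocks of $f(C)=C_{\alpha\wedge\gamma}(\ell(C^m))$ are inversion-free ``as they are contained in blocks of $C^m$,'' but the containment goes the other way: since $\alpha\wedge\gamma\leq\beta$ we have $f(C)\leq C^m$, so $f(C)$ is a \emph{coarsening} of $C^m$ and each of its blocks is a union of consecutive blocks of $C^m$. Merging blocks can create inversions, and ruling that out is the whole difficulty of the first claim. The paper's proof handles it by fixing a block $C'_i$ of $f(C)$, letting $\ell'$ be the restriction of $\ell(C^m)$ to $C'_i$, and showing (using lexicographic minimality together with Proposition \ref{prop:increasing}) that $\ell'$ is $\ell$-increasing on each block of $D'=C_{\alpha}(C^m)\cap C'_i$ and on each block of $E'=C_{\gamma}(C^m)\cap C'_i$; since the finest common coarsening of $D'$ and $E'$ is the one-block composition $(C'_i)$, $\ell'$ is increasing on all of $C'_i$, and a second application of Proposition \ref{prop:increasing} (this is where locally special is used) gives that $C'_i$ contains no inversions. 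None of this appears in your sketch; the order-ideal statement you do give is fine but is only the easy half. (Your induction that each $C^k\in X_{\beta,D}$ is really the pre-lemma setup, and is more simply seen from the fact that blocks of $C^{k+1}$ are subsets of blocks of $C_{\alpha}(C^k)$ or $C_{\gamma}(C^k)$, which are inversion-free because $C^k$ stays in the support of $\vec v$.)

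For the second claim your route is both stronger than needed and unproved at its crux. You propose to show that the iteration started from any $E\geq f(C)$ retracts onto the \emph{same} fixed point $C^m$, and you yourself flag this convergence-to-a-common-fixed-point (and the ``fixed by both the $\alpha$- and $\gamma$-steps'' issue) as the main obstacle; no such statement is required. The paper's argument is the observation you mention only in passing and do not justify: each step from $E^k$ to $E^{k+1}$ merely permutes elements within blocks of $C_{\alpha}(E^k)$ (resp.\ $C_{\gamma}(E^k)$), and since those blocks lie inside blocks of $C_{\alpha\wedge\gamma}(E^k)$, the coarsening $C_{\alpha\wedge\gamma}(E^k)$ is invariant along the iteration. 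Because $C_{\alpha\wedge\gamma}(E)=f(C)$, the stabilized composition $E^m$ still satisfies $C_{\alpha\wedge\gamma}(E^m)=f(C)$, and hence $f(E)=C_{\alpha\wedge\gamma}(\ell(E^m))=C_{\alpha\wedge\gamma}(E^m)=f(C)$, with no uniqueness of fixed points needed and regardless of whether the iterations from $C$ and from $E$ terminate at the same composition. So to repair your write-up you should prove that single invariance statement and drop the retraction claim, and you must supply a genuine argument for inversion-freeness in the first claim along the lines above.
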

Before we give the proof, we define another operation on set compositions. Given $C \models N$, and $S \subset N$, the set composition $C \cap S$ is defined by taking $(C_1 \cap S, C_2 \cap S , \cdots ,C_{\ell(C)} \cap S)$, and then deleting any coordinates corresponding to empty intersections. For instance, $123|456|789 \cap \{1, 7,8 \} = 1|78$.
\begin{proof}
 We show that $f(C)$ is a $D$-set composition. Write $f(C) = C'_1 | \cdots | C'_r$. Since $f(C) \leq C^m$, the set $C'_1 \cup \cdots \cup C'_i$ is a union of blocks of $C^m$, and hence is a $\leq_1$-order ideal. Thus, it suffices to show that that $C'_i$ does not contain any inversions. Define $D' = C_{\alpha}(C^m) \cap C'_i$, which is a set composition of $C'_i$. Let $\ell'$ be the restriction of $\ell(C^m)$ to $C'_i$. Then $\ell'$ is lexicographically least in $C'_i$; if there exists $\tau <_{\ell} \ell'$, then we could modify $\ell(C^m)$ by replacing $\ell(C^m)|_{C'_i}$ with $\tau$ and obtain a new refinement of $C^m$ that is lexicographically smaller than $\ell(C^m)$. Since each block $D'_j$ of $D'$ contains no inversions, by Proposition \ref{prop:increasing}, $\ell'|_{D'_j}$ is strictly increasing. If we let $E' = C_{\gamma}(C^m) \cap C'_i$, then by similar reasoning $\ell'$ is strictly increasing when restricted to each block of $E'$. We see that the finest common coarsening of $D'$ and $E'$ is the set composition $(C_i)$.
 Thus $\ell(C^m)$ is strictly increasing on all of $C'_i$. By Proposition \ref{prop:increasing}, it follows that $D$ has no inversions in $C'_i$. Therefore $C'$ is a $D$-set composition. We also see that $C^m = C_{\gamma}(\ell(f(C))).$

Now let $D \geq f(C)$. For every integer $k$, we define 
\[ D^{k+1} = \begin{cases}
 C_{\beta}(\ell(C_{\alpha}(D^k))) & k \mbox{ is even} \\
 C_{\beta}(\ell(C_{\gamma}(D^k))) & k \mbox{ is odd}
\end{cases} 
\]
 Thus we obtain a sequence of compositions $D^k$ such that $D^{k+1} \leq_{\ell} D^k$ and $[D^k]\vec{v} = [D]\vec{v}$ for all $k$. We observe that $D^{k+1}$ involves permuting the elements from blocks of $D^k$ that belong to the same block of $C_{\alpha}(D^k)$ (or $C_{\gamma}(C^k)$). Hence $C_{\alpha \wedge \gamma}(D^{k+1}) = C_{\alpha \wedge \gamma}(D^k) = f(C)$ for all $k$. Thus $D^{k+1} \geq f(C)$ for all $k$.

If we let $m$ be the first integer for which $D^{m+1} = D^m$, then we have $f(D) = C_{\alpha \wedge \gamma}(D^m) = f(C)$.
\end{proof}
\begin{proof}[Proof of Proposition \ref{prop:dblposinter}]
Let $\vec{v} \in \theta_{\alpha, \beta}(V_{\alpha, D}) \cap \theta_{\gamma, \beta}(V_{\gamma, D}).$
For any $C \in X_{\beta, D}$, let $m(C) = C_{\beta}(\ell(f(C)))$. Then we have $\vec{v}_C = \vec{v}_{m(C)}$. Moreover, for any $C' \geq f(C)$, by Lemma \ref{lem:needed}, we have $f(C) = f(C')$. Hence $m(C') = m(C)$ and  $[C]\vec{v} = [C']\vec{v}$. Given $D \in X_{\alpha \wedge \gamma, D}$, we define $[D]\vec{w} = [C]\vec{v}$ for any $C \in X_{\beta, D}$ such that $f(C) = D$. By what we have just observed, $[D]\vec{w}$ does not depend upon the choice of $C$. Also, for any $C \in X_{\beta, D}$, we know that $f(C) \in X_{\alpha \wedge \gamma, D}$ by Lemma \ref{lem:needed}.
Thus $\theta_{\alpha \wedge \gamma, \beta}(\vec{w}) = \vec{v}$. Hence $\vec{v} \in \theta_{\alpha \wedge \gamma, \beta}(V_{\alpha \wedge \gamma, D})$.
\end{proof}

For $\alpha \models |N|$, we define $W_{\alpha, D}$ to be the complement in $V_{\alpha, D}$ of the module \[\vspan \{ \theta_{\beta, \alpha}(X_{\beta, D}): \beta < \alpha \}\]
\begin{proposition}
Let $D$ be a locally special double poset on a finite set $N$. Let $\alpha \leq \beta \models |N|$ and $\gamma \leq \beta$. Then we have $\theta_{\alpha, \beta}(W_{\alpha, D}) \cap \theta_{\gamma, \beta}(W_{\gamma, D}) = \{\vec{0} \}$.
\end{proposition}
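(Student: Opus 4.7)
The plan is to reduce the claim to Proposition \ref{prop:dblposinter} combined with the composition law from Proposition \ref{prop:thetaprops} (which I read in the corrected form $\theta_{\beta,\gamma}\circ\theta_{\alpha,\beta}=\theta_{\alpha,\gamma}$ for $\alpha\leq\beta\leq\gamma$, matching the actual proof given there) and the fact that $W_{\alpha,D}$ was defined as a chosen complement. I note that the statement only has content when $\alpha\neq\gamma$: otherwise $\theta_{\alpha,\beta}(W_{\alpha,D})\cap\theta_{\gamma,\beta}(W_{\gamma,D})$ is just $\theta_{\alpha,\beta}(W_{\alpha,D})$ itself. I will treat $\alpha\neq\gamma$ as an implicit hypothesis.

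First I would pick $\vec v\in\theta_{\alpha,\beta}(W_{\alpha,D})\cap\theta_{\gamma,\beta}(W_{\gamma,D})$ and write $\vec v=\theta_{\alpha,\beta}(\vec w_\alpha)=\theta_{\gamma,\beta}(\vec w_\gamma)$ with $\vec w_\alpha\in W_{\alpha,D}$ and $\vec w_\gamma\in W_{\gamma,D}$. In particular $\vec v$ lies in the ambient intersection $\theta_{\alpha,\beta}(V_{\alpha,D})\cap\theta_{\gamma,\beta}(V_{\gamma,D})$, so Proposition \ref{prop:dblposinter} (which uses local specialness) supplies $\vec u\in V_{\alpha\wedge\gamma,D}$ with $\vec v=\theta_{\alpha\wedge\gamma,\beta}(\vec u)$. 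Because $\alpha\neq\gamma$, at least one of $\alpha\wedge\gamma<\alpha$ or $\alpha\wedge\gamma<\gamma$ must hold; by symmetry I may assume $\alpha\wedge\gamma<\alpha$.

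Next I would apply the composition law along the chain $\alpha\wedge\gamma\leq\alpha\leq\beta$ to rewrite
\[
\theta_{\alpha,\beta}(\vec w_\alpha)=\vec v=\theta_{\alpha\wedge\gamma,\beta}(\vec u)=\theta_{\alpha,\beta}\bigl(\theta_{\alpha\wedge\gamma,\alpha}(\vec u)\bigr).
\]
Since $\theta_{\alpha,\beta}$ is injective by Proposition \ref{prop:thetaprops}, this forces $\vec w_\alpha=\theta_{\alpha\wedge\gamma,\alpha}(\vec u)$. But the right-hand side lies in $\vspan\{\theta_{\beta',\alpha}(X_{\beta',D}):\beta'<\alpha\}$ (take $\beta'=\alpha\wedge\gamma$), and $W_{\alpha,D}$ is by definition a complement of that subspace, so $\vec w_\alpha=\vec 0$ and hence $\vec v=\vec 0$.

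The main obstacle is really just recognizing that Proposition \ref{prop:dblposinter} is exactly the tool that lets one push $\vec v$ down to the meet $\alpha\wedge\gamma$; once that is in hand everything becomes a short formal consequence of injectivity, the composition law, and the complementary definition of $W_{\alpha,D}$. Local specialness of $D$ enters the argument only indirectly, through its role as a hypothesis in Proposition \ref{prop:dblposinter}.
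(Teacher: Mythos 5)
Your argument is correct and is essentially the paper's own proof: take $\vec v$ in the intersection, use Proposition \ref{prop:dblposinter} to place it in $\theta_{\alpha\wedge\gamma,\beta}(V_{\alpha\wedge\gamma,D})$, factor this as $\theta_{\alpha,\beta}\circ\theta_{\alpha\wedge\gamma,\alpha}$ via Proposition \ref{prop:thetaprops}, and conclude from injectivity of $\theta_{\alpha,\beta}$ together with the definition of $W_{\alpha,D}$ as a complement that the preimage, and hence $\vec v$, is zero. Your explicit treatment of the degenerate case $\alpha=\gamma$ and the WLOG reduction to $\alpha\wedge\gamma<\alpha$ makes precise a point the paper leaves implicit, but the route is the same.
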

\begin{proof} 
Let $\vec{v} \in \theta_{\alpha, \beta}(W_{\alpha, D}) \cap \theta_{\gamma, \beta}(W_{\gamma, D}).$ Then $\vec{v} = \theta_{\alpha, \beta}(\vec{y})$ for some $\vec{y} \in W_{\alpha, D}$.
By Proposition \ref{prop:dblposinter} and Proposition \ref{prop:thetaprops}, we have $\vec{v} \in \theta_{\alpha \wedge \gamma, \beta}(V_{\alpha \wedge \gamma, D}) = \theta_{\alpha, \beta} \circ \theta_{\alpha \wedge \gamma, \alpha}(V_{\alpha \wedge \gamma, D}).$ Thus $\vec{v} = \theta_{\alpha, \beta}(\vec{z})$ for some $\vec{z} \in \theta_{\alpha \wedge \gamma, \alpha}(V_{\alpha \wedge \gamma, D}).$ Since $\theta_{\alpha, \beta}$ is injective, we have $\vec{y} = \vec{z}$. However, by the definition of $W_{\alpha, D}$, it follows that $\vec{y} = \vec{z} = \vec{0}$, and thus $\vec{v} = \vec{0}$. Hence $\theta_{\alpha, \beta}(W_{\alpha, D}) \cap \theta_{\gamma, \beta}(W_{\gamma, D}) = \{\vec{0}\}$. \end{proof}

\begin{proof}[Proof of Theorem \ref{thm:dblposfeffect}]
Let $D$ be a locally special poset on $N$ with $\mathfrak{G} \subseteq \Aut(D)$.
Let us write $\chiex{D} = \sum\limits_{\alpha \models |N|} \psi_{\alpha, D} F_{\alpha}$ where the $\psi_{\alpha, D}$ are virtual characters. Then we have $\chialphaex{D} = \sum\limits_{\beta \leq \alpha} \psi_{\beta, D}$. We claim that $\psi_{\beta, D}$ is the character of $W_{\beta, D}$. It suffices to show that \begin{equation} V_{\alpha, D} = \bigoplus_{\beta \leq \alpha} \theta_{\beta, \alpha}(W_{\beta, D}) \simeq \bigoplus_{\beta \leq \alpha} W_{\beta, D} \label{eq:iso} \end{equation} as $\mathfrak{G}$-modules for all $\alpha \models |N|$. 

We prove Equation \eqref{eq:iso} by induction on $\ell(\alpha)$. We see that \[\vspan\{\theta_{\beta, \alpha}(W_{\beta, D}): \beta < \alpha \} \subseteq \vspan \{\theta_{\beta, \alpha}(X_{\beta, D}): \beta < \alpha \}. \] By induction, we have \[\theta_{\beta, \alpha}(V_{\beta, D}) = \theta_{\beta, \alpha}(\bigoplus_{\gamma \leq \beta} \theta_{\gamma, \beta}(W_{\gamma, D })) = \bigoplus_{\gamma \leq \beta} \theta_{\gamma, \alpha}(W_{\gamma, D}).
\]
The last equality uses the fact that $\theta_{\beta, \alpha}$ is $\mathfrak{G}$-invariant, and the property that $\theta_{\beta, \alpha} \circ \theta_{\gamma, \beta} = \theta_{\gamma, \alpha}$.
Thus we have \[\vspan \{\theta_{\beta, \alpha}(X_{\beta, D}): \beta < \alpha \} = \vspan \{\theta_{\beta, \alpha}(W_{\beta, D}): \beta < \alpha \} = \bigoplus_{\beta < \alpha} \theta_{\beta, \alpha}(W_{\beta, D}).\] 
Combined with the definition of $W_{\alpha, D}$, we get \[V_{\alpha, D} = \bigoplus_{\beta \leq \alpha} \theta_{\beta, \alpha}(W_{\beta, D}) \simeq \bigoplus_{\beta \leq \alpha} W_{\beta, D} \]
where the last isomorphism comes from the fact that $\theta_{\beta, \alpha}$ is $\mathfrak{G}$-invariant and injective.
\end{proof}

\section{Orbital Invariants}
\label{sec:orbinv}
In this section we define orbital quasisymmetric function invariants. In the case of double posets, the resulting invariant was already studied by Grinberg \cite{grinberg}. These are quasisymmetric functions whose coefficients count the number of orbits of a group action. Due to Burnside's Lemma, these invariants can be computed as $\langle 1, \chi \rangle$, where $1$ is the trivial character, and $\chi$ is the quasisymmetric class function. As a result, we see that we derive many results for our orbital invariants from the class functions.

We define the \emph{orbital quasisymmetric $D$-partition enumerator} by 
\[\Omega^O(D, \mathfrak{G}, \mathbf{x}) = \frac{1}{|\mathfrak{G}|} \sum\limits_{\mathfrak{g} \in \mathfrak{G}} \Omega(D, \mathfrak{G}, \mathbf{x}; \mathfrak{g}) \] 
and the \emph{orbital order polynomial} by 
\[\Omega^O(D, \mathfrak{G}, x) = \frac{1}{|\mathfrak{G}|} \sum\limits_{\mathfrak{g} \in \mathfrak{G}} \Omega(D, \mathfrak{G}, x; \mathfrak{g}). \]

We also define the \emph{orbital chromatic quasisymmetric function} of a digraph $G$ by 
\[\chi^O(G, \mathfrak{G}, t, \mathbf{x}) = \frac{1}{|\mathfrak{G}|} \sum\limits_{\mathfrak{g} \in \mathfrak{G}} \chi(G, \mathfrak{G}, t, \mathbf{x}; \mathfrak{g}) \] and the \emph{orbital chromatic polynomial} by 
\[\chi^O(G, \mathfrak{G}, t, x) = \frac{1}{|\mathfrak{G}|} \sum\limits_{\mathfrak{g} \in \mathfrak{G}} \chi(G, \mathfrak{G}, t, x; \mathfrak{g}) \]

The following results follow from Burnside's Lemma. Property 2 is Grinberg's definition of the orbital $D$-partition enumerator.
\begin{proposition}
Let $D$ be a double poset on a finite set $N$ and let $\mathfrak{G} \subseteq \Aut(D)$. Let $G$ be a digraph on $N$, and let $\mathfrak{H} \subseteq \Aut(G)$.
\begin{enumerate}
\item We have $[M_{\alpha}]\Omega^O(D, \mathfrak{G}, \spe{x}) = |X_{\alpha, D} / \mathfrak{G}|$.

\item Let $\mathcal{T}$ be a transversal for $\mathfrak{G}$ acting on $P_D$. Then $\Omega^O(D, \mathfrak{G}, \mathbf{x}) = \sum\limits_{f \in \mathcal{T}} \mathbf{x}^f$.

\item For $n \in \mathbb{N}$, we have $\Omega^O(D, \mathfrak{G}, n) = |X_{n, D} / \mathfrak{G}|$.

\item Let $\mathcal{T}$ be a transversal for $\mathfrak{H}$ acting on $C_G$. Then $\Omega^O(G, \mathfrak{H}, \mathbf{x}) = \sum\limits_{f \in \mathcal{T}} \mathbf{x}^f$.

\item For $n \in \mathbb{N}$, and $k > 0$, we have $[t^k]\chi^O(G, \mathfrak{H}, t, n) = |C_{k, n, G} / \mathfrak{H}|$.

\end{enumerate}
\end{proposition}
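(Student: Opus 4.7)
The plan is to derive all five statements by a single application of Burnside's lemma (the Cauchy--Frobenius lemma), combined with the character/fixed-point descriptions already available from Theorem \ref{thm:coloring} and Lemma \ref{lem:fundamental2}. The guiding observation is that the monomial $\mathbf{x}^f$ and the $t$-weight $t^{\asc(f)}$ are both constant on each $\mathfrak{G}$-orbit of colorings or $D$-partitions, because every element of $\mathfrak{G}$ acts by permuting the vertex set (so $\mathbf{x}^{\mathfrak{g}f} = \prod_v x_{f(\mathfrak{g}^{-1}v)} = \mathbf{x}^f$), and because $\asc$ is $\mathfrak{G}$-invariant (noted in the introduction).

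For parts (1), (3), and (5), I would unwind the definitions so that the coefficient in question is literally an average of fixed-point counts. Concretely, for (1), use the formula $\Omega(D,\mathfrak{G},\mathbf{x}) = \sum_{\alpha \models |N|} \chi_{\alpha,D} M_\alpha$ from Theorem \ref{thm:coloring}, so that
\[
[M_\alpha]\Omega^O(D,\mathfrak{G},\mathbf{x}) \;=\; \tfrac{1}{|\mathfrak{G}|}\sum_{\mathfrak{g}\in\mathfrak{G}} \chi_{\alpha,D}(\mathfrak{g}) \;=\; \tfrac{1}{|\mathfrak{G}|}\sum_{\mathfrak{g}\in\mathfrak{G}} |\Fix_{\mathfrak{g}}(X_{\alpha,D})|,
\]
which Burnside identifies with $|X_{\alpha,D}/\mathfrak{G}|$. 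Part (3) is the same argument applied to the action of $\mathfrak{G}$ on $X_{n,D}$, via the third formula in Theorem \ref{thm:coloring}. For (5), the argument is identical, but one first separates out the $t$-weight: the set $C_{k,n,G}$ is preserved by $\mathfrak{H}$ because $\asc$ is $\mathfrak{H}$-invariant, so $[t^k]\chi^O(G,\mathfrak{H},t,n)$ is the average of $|\Fix_{\mathfrak{g}}(C_{k,n,G})|$, which again is $|C_{k,n,G}/\mathfrak{H}|$ by Burnside.

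For parts (2) and (4), I would exchange the order of summation rather than extract coefficients. Using the fixed-point definition,
\[
\Omega^O(D,\mathfrak{G},\mathbf{x}) \;=\; \tfrac{1}{|\mathfrak{G}|}\sum_{\mathfrak{g}\in\mathfrak{G}}\sum_{f\in\Fix_{\mathfrak{g}}(P_D)} \mathbf{x}^f \;=\; \sum_{f\in P_D} \tfrac{|\mathfrak{G}_f|}{|\mathfrak{G}|}\,\mathbf{x}^f \;=\; \sum_{f\in P_D} \tfrac{1}{|\mathfrak{G}(f)|}\,\mathbf{x}^f,
\]
where the last step uses the orbit--stabilizer theorem. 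Regrouping by orbits and using $\mathbf{x}^{\mathfrak{g}f}=\mathbf{x}^f$ collapses each orbit sum to a single monomial $\mathbf{x}^f$, giving $\sum_{f\in\mathcal{T}} \mathbf{x}^f$. Part (4) is the same computation applied to $\mathfrak{H}$ acting on $C_G$, once one notes that both $\mathbf{x}^f$ and $t^{\asc(f)}$ are orbit-invariant.

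I do not anticipate a real obstacle: every piece is a direct bookkeeping consequence of Burnside plus the fixed-point formulas already proven. The only thing to check with any care is the orbit-invariance of the weights $\mathbf{x}^f$ and $t^{\asc(f)}$, both of which have already been established or are immediate from the fact that $\mathfrak{G}, \mathfrak{H}$ act by vertex permutations that preserve the relevant combinatorial structure.
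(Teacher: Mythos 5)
Your proposal is correct and matches the paper's own argument: the paper likewise proves the coefficient statements by averaging the fixed-point counts $|\Fix_{\mathfrak{g}}(X_{\alpha,D})|$ (Burnside) and proves the transversal statements by swapping the order of summation, applying orbit--stabilizer, and collapsing each orbit to a single monomial, proving (1) and (2) explicitly and noting the rest are similar. Your extra remark that $\mathbf{x}^f$ and $t^{\asc(f)}$ are orbit-invariant is exactly the invariance the paper also invokes, so there is nothing missing.
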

\begin{proof}
We will prove the first and second claim. The rest are similar. For the first, we observe that \[[M_{\alpha}] \Omega^O(D, \mathfrak{G}, \mathbf{x}) = \frac{1}{|\mathfrak{G}|} \sum\limits_{\mathfrak{g} \in \mathfrak{G}} [M_{\alpha}]\Omega(D, \mathfrak{G}, \mathbf{x}; \mathfrak{g}) = \frac{1}{|\mathfrak{G}|} \sum\limits_{\mathfrak{g} \in \mathfrak{G}} |\Fix_{\mathfrak{g}}(X_{\alpha, D})|.\]

For the second identity, we note that for any $f: N \to \mathbb{N}$ and $\sigma \in \mathfrak{S}_N$, we have $\mathbf{x}^{\sigma f} = \mathbf{x}^f$. We see that
\begin{align*} \frac{1}{|\mathfrak{G}|} \sum\limits_{\mathfrak{g} \in \mathfrak{G}} \Omega(D, \mathfrak{G}, \mathbf{x}; \mathfrak{g}) & = \frac{1}{|\mathfrak{G}|} \sum\limits_{\mathfrak{g} \in \mathfrak{G}} \sum\limits_{f \in \Fix_{\mathfrak{g}}(X_D)} \mathbf{x}^f \\
&= \frac{1}{|\mathfrak{G}|} \sum\limits_{f \in X_D} \mathbf{x}^f \sum\limits_{\mathfrak{g} \in \mathfrak{G}_f} 1 \\
&= \sum\limits_{f \in X_D} \frac{1}{|\mathfrak{G}(f)|}\mathbf{x}^f \\
&= \sum\limits_{h \in \mathcal{T}} \mathbf{x}^h \sum\limits_{f \in \mathfrak{G}(h)} \frac{1}{|\mathfrak{G}_f|}.
\end{align*}
The second equality is just a rearrangment of terms. The third equality follows from the Orbit-Stabilizer Theorem. The last equality involves splitting the summation into a double sum, and recognize that $\mathbf{x}^f$ is constant on orbits. Finally, the inner summation simplifies to 1.
\end{proof}

We also obtain several facts about the coefficients of the orbital invariants with respect to various bases. For polynomials $f(t)$ and $g(t)$, we write $f(t) \leq_t g(t)$ if $g(t) - f(t) \in \mathbb{N}[t]$. 
\begin{corollary}
Let $D$ be a double poset on $N$. Given $\mathfrak{G} \subseteq \Aut(D),$ we have $\Omega^O(D, \mathfrak{G}, \mathbf{x})$ is $M$-increasing, and $\Omega^O(D, \mathfrak{G}, x)$ is strongly flawless.
\end{corollary}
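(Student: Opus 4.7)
The plan is to deduce both assertions from the corresponding results about the quasisymmetric class function $\Omega(D, \mathfrak{G}, \mathbf{x})$ (Theorem \ref{thm:flawless}) together with the transfer machinery provided by Propositions \ref{prop:global} and \ref{prop:global2}. The key observation is that Burnside's lemma, applied coefficient-by-coefficient in any basis, says the orbital invariant is nothing other than the inner product with the trivial character:
\[
\Omega^O(D, \mathfrak{G}, \mathbf{x}) \;=\; \frac{1}{|\mathfrak{G}|} \sum_{\mathfrak{g} \in \mathfrak{G}} \Omega(D, \mathfrak{G}, \mathbf{x}; \mathfrak{g}) \;=\; \langle 1, \Omega(D, \mathfrak{G}, \mathbf{x}) \rangle,
\]
and similarly $\Omega^O(D, \mathfrak{G}, x) = \langle 1, \ps(\Omega(D, \mathfrak{G}, \mathbf{x})) \rangle$. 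So once the class-function versions are known to have the right positivity and monotonicity, the orbital versions inherit them by pairing with $1$.

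For the first claim, I would first invoke Theorem \ref{thm:flawless} to conclude that $\Omega(D, \mathfrak{G}, \mathbf{x})$ is $M$-increasing. Since $1$ is an irreducible character, Proposition \ref{prop:global}\ref{prop:charincreasing} then gives that for every pair $\alpha \leq \beta$ of integer compositions of $|N|$,
\[
[M_{\alpha}]\langle 1, \Omega(D, \mathfrak{G}, \mathbf{x}) \rangle \;\leq\; [M_{\beta}]\langle 1, \Omega(D, \mathfrak{G}, \mathbf{x}) \rangle,
\]
which is exactly the statement that $\Omega^O(D, \mathfrak{G}, \mathbf{x})$ is $M$-increasing as an ordinary quasisymmetric function (viewed as a class function of the trivial group, as is done at the end of the proof of Proposition \ref{prop:global2}).

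For the second claim, I would again use that $\Omega(D, \mathfrak{G}, \mathbf{x})$ is $M$-increasing (Theorem \ref{thm:flawless}) and apply Proposition \ref{prop:global2}\ref{prop:flawlessorb} with $\psi = 1$. That proposition asserts precisely that $\langle 1, \ps(\Omega(D, \mathfrak{G}, \mathbf{x})) \rangle$ is strongly flawless, and by the identification above this quantity is $\Omega^O(D, \mathfrak{G}, x)$. Nonnegativity of the $f$-vector (which is part of being strongly flawless in our convention) is immediate since each coefficient counts orbits.

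No step should present real difficulty; the proposition is essentially a bookkeeping consequence of the transfer results already proved. The only point worth being careful about is making sure that $\Omega^O(D,\mathfrak{G},\mathbf{x})$ is interpreted as a quasisymmetric function (equivalently, a class function for the trivial group), so that the hypotheses of Propositions \ref{prop:global}\ref{prop:charincreasing} and \ref{prop:global2}\ref{prop:flawlessorb} apply cleanly; this is the same reduction used in the final line of the proof of Proposition \ref{prop:global2}.
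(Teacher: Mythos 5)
Your proposal is correct and follows essentially the same route as the paper: deduce that $\Omega(D,\mathfrak{G},\mathbf{x})$ is $M$-increasing from Theorem \ref{thm:flawless}, then pair with the trivial character and apply Proposition \ref{prop:global}\ref{prop:charincreasing} and Proposition \ref{prop:global2}\ref{prop:flawlessorb} to transfer both properties to the orbital invariants. The only cosmetic difference is that you spell out the identification $\Omega^O = \langle 1, \Omega(D,\mathfrak{G},\mathbf{x})\rangle$ and the nonnegativity remark, which the paper leaves implicit.
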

\begin{proof}
Let $D$ be a double poset on $N$, and let $\mathfrak{G} \subseteq \Aut(D)$. 
Since $\Omega(D, \mathfrak{G}, \mathbf{x})$ is $M$-increasing, then if we take the inner product with the trivial character, Proposition \ref{prop:global} \ref{prop:charincreasing} implies that that $\Omega^O(D, \mathfrak{G}, \mathbf{x})$ is $M$-increasing. Also, Proposition \ref{prop:global2} \ref{prop:flawlessorb} implies that $\Omega^O(D, \mathfrak{G}, x)$ is strongly flawless.

Suppose that $D$ is locally special. Then by Corollary \ref{cor:characterfpos}, applied with $\psi$ being the trivial character, we see that $\Omega^O(D, \mathfrak{G}, \mathbf{x})$ is $F$-positive. Since $\Omega^O(D, \mathfrak{G}, x) = \ps(\Omega^O(D, \mathfrak{G}, \mathbf{x}))$, it follows from Proposition \ref{prop:global2} \ref{prop:flawlessorb} that $\Omega^O(D, \mathfrak{G}, x)$ is $h$-positive.

Let $G$ be a directed graph, and let $\mathfrak{H} \subseteq \Aut(G)$. Fix $k \in \mathbb{N}$. By Theorem \ref{thm:feffectivedigraph}, we see that $[t^k]\chi^O(G, \mathfrak{H}, \mathbf{x})$ is $F$-positive. Hence it is also $M$-increasing. By principal specialization and Proposition \ref{prop:global2} \ref{prop:flawlessorb}, we see that $[t^k]\chi^O(G, \mathfrak{H}, x)$ is $h$-positive, and $[t^k]\chi^O(G, \mathfrak{H}, x)$ is strongly flawless.

\end{proof}

\subsection{Orbital Combinatorial Reciprocity Results}

We can also obtain a combinatorial reciprocity for the orbital $D$-partition enumerator, although it involves the notion of coeven $D$-partition. Given a group $\mathfrak{G} \subseteq \mathfrak{S}_N$, and an action of $\mathfrak{G}$ on a set $X$, let we say that an element $x \in X$ is $\mathfrak{G}$-coeven if the stabilizer subgroup $\mathfrak{G}_x$ is a subgroup of the alternating group $\mathfrak{A}_N$. Let $X^+$ be the set of $\mathfrak{G}$-coeven elements. Then $\mathfrak{G}$ acts on $X^+$.

The following Lemma is essentially a restatement of Lemma 7.7 in Grinberg \cite{grinberg}, although our proof is slightly different.
\begin{lemma}
Let $\mathfrak{G} \subseteq \mathfrak{S}_N$ be a group acting on a finite set $X$.
\[ |X^+/\mathfrak{G}| = \frac{1}{|\mathfrak{G}|}\sum\limits_{\mathfrak{g} \in \mathfrak{G}} \sgn(\mathfrak{g}) |\Fix_{\mathfrak{g}}(X)|\]
\label{lem:coeven}
\end{lemma}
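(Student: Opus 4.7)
The plan is to equate both sides by swapping the order of summation and then applying a standard fact about summing the sign character over a subgroup. First I would apply Burnside's lemma to rewrite the left-hand side as
\[ |X^+/\mathfrak{G}| = \frac{1}{|\mathfrak{G}|}\sum\limits_{\mathfrak{g} \in \mathfrak{G}} |\Fix_{\mathfrak{g}}(X^+)|.\]
Thus, after clearing the factor $1/|\mathfrak{G}|$, it suffices to prove
\[ \sum\limits_{\mathfrak{g} \in \mathfrak{G}} |\Fix_{\mathfrak{g}}(X^+)| = \sum\limits_{\mathfrak{g} \in \mathfrak{G}} \sgn(\mathfrak{g}) |\Fix_{\mathfrak{g}}(X)|.\]

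Next, I would swap the order of summation on each side so that the outer sum is indexed by $x$ rather than by $\mathfrak{g}$. On the left this yields $\sum_{x \in X^+} |\mathfrak{G}_x|$, since each $x \in X^+$ contributes once for every $\mathfrak{g}$ that fixes it. On the right it yields $\sum_{x \in X} \sum_{\mathfrak{g} \in \mathfrak{G}_x} \sgn(\mathfrak{g})$. So the lemma reduces to showing, for every $x \in X$, the identity
\[ \sum_{\mathfrak{g} \in \mathfrak{G}_x} \sgn(\mathfrak{g}) = \begin{cases} |\mathfrak{G}_x| & x \in X^+, \\ 0 & x \notin X^+. \end{cases}\]

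The case $x \in X^+$ is immediate from the definition, since then $\mathfrak{G}_x \subseteq \mathfrak{A}_N$ and every term is $+1$. The remaining case is the only genuine point in the argument: if $\mathfrak{G}_x \not\subseteq \mathfrak{A}_N$, then the restriction $\sgn\colon \mathfrak{G}_x \to \{\pm 1\}$ is a surjective group homomorphism, so its kernel has index two in $\mathfrak{G}_x$, and $\mathfrak{G}_x$ contains the same number of even as odd permutations; hence the signed sum vanishes. Combining the two cases, the right-hand sum collapses to $\sum_{x \in X^+} |\mathfrak{G}_x|$, matching the left-hand side and completing the proof.

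I expect no serious obstacle here; the only subtlety is recognizing that $\sgn$ restricted to a non-coeven stabilizer is a surjective homomorphism onto $\{\pm 1\}$, which then forces the sign sum to vanish by a kernel/coset count. Everything else is bookkeeping via Burnside and Fubini.
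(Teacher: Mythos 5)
Your proposal is correct and follows essentially the same route as the paper: both exchange the order of summation to reduce to the sign sum over each stabilizer $\mathfrak{G}_x$, observe that this sum vanishes when $\mathfrak{G}_x \not\subseteq \mathfrak{A}_N$ (equal numbers of even and odd elements) and equals $|\mathfrak{G}_x|$ otherwise, and then invoke Burnside's lemma for the action on $X^+$. The only difference is cosmetic: you start from the left-hand side and reduce to a pointwise identity, while the paper manipulates the right-hand side directly.
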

\begin{proof}
We have
\begin{align*} \frac{1}{|\mathfrak{G}|}\sum\limits_{\mathfrak{g} \in \mathfrak{G}} \sgn(\mathfrak{g}) |\Fix_{\mathfrak{g}}(X)| & = \frac{1}{|\mathfrak{G}|}\sum\limits_{\mathfrak{g} \in \mathfrak{G}} \sum\limits_{x \in X: \mathfrak{g}x = x} \sgn(\mathfrak{g}) \\
& = \sum\limits_{x \in X} \frac{1}{|\mathfrak{G}|}\sum\limits_{\mathfrak{g} \in \mathfrak{G}_x} \sgn(\mathfrak{g}).
\end{align*}
Suppose that $\mathfrak{G}_x \not\subseteq \mathfrak{A}_N$. Then $\mathfrak{H} = \mathfrak{A}_N \cap \mathfrak{G}$ is a normal subgroup of $\mathfrak{G}$ of index $2$. Thus half the elements $\mathfrak{g}$ of $\mathfrak{G}_x$ are even, and half are odd, and these elements have opposite signs under $\sgn$. Hence the inner sum is zero in that case.

Hence we are left with those $x$ for which $\mathfrak{G}_x \subseteq \mathfrak{A}_N$. Then we obtain:
\begin{align*}
    \frac{1}{|\mathfrak{G}|}\sum\limits_{\mathfrak{g} \in \mathfrak{G}} \sgn(\mathfrak{g}) |\Fix_{\mathfrak{g}}(X)| & = \sum\limits_{x \in X^+} \frac{1}{|\mathfrak{G}|}\sum\limits_{\mathfrak{g} \in \mathfrak{G}_x} \sgn(\mathfrak{g}) \\ 
    & = \frac{1}{|\mathfrak{G}|}\sum\limits_{\mathfrak{g} \in \mathfrak{G}} \sum\limits_{x \in \Fix_{\mathfrak{g}}(X^+)} 1 \\
    & = |X^+/\mathfrak{G}|.
\end{align*}
\end{proof}

We define the \emph{$\mathfrak{G}$-coeven quasisymmetric function} by 
\[\Omega^+(D, \mathfrak{G}, \mathbf{x}) = \frac{1}{|\mathfrak{G}|} \sum\limits_{\mathfrak{g} \in \mathfrak{G}} \sgn(\mathfrak{g}) \Omega(D, \mathfrak{G}, \mathbf{x}; \mathfrak{g}) \] 
and the \emph{orbital polynomial} by 
\[\Omega^+(D, \mathfrak{G}, x) = \frac{1}{|\mathfrak{G}|} \sum\limits_{\mathfrak{g} \in \mathfrak{G}} \sgn(\mathfrak{g}) \Omega(D, \mathfrak{G}, x; \mathfrak{g}). \]
By using Lemma \ref{lem:coeven}, we obtain the following results.
\begin{proposition}
Let $D$ be a double poset on a finite set $N$ and let $\mathfrak{G} \subseteq \Aut(D)$. Let $G$ be a digraph on $N$, and let $\mathfrak{H} \subseteq \Aut(G)$.
\begin{enumerate}
\item We have $[M_{\alpha}]\Omega^+(D, \mathfrak{G}, \spe{x}) = |X^+_{\alpha, D} / \mathfrak{G}|$.

\item Let $\mathcal{T}$ be a transversal for $\mathfrak{G}$ acting on $P_D^+$. Then $\Omega^+(D, \mathfrak{G}, \spe{x}) = \sum\limits_{f \in \mathcal{T}} \mathbf{x}^f$.

\item For $n \in \mathbb{N}$, we have $\Omega^+(D, \mathfrak{G}, n) = |X^+_{n, D} / \mathfrak{G}|$.

\item If $\mathcal{T}$ is a transversal for $\mathfrak{H}$ acting on $C_D^+$, then $\chi^+(G, \mathfrak{H}, t, n) = \sum\limits_{f \in \mathcal{T}} t^{\asc(f)} \mathbf{x}^f$.

\item For $n \in \mathbb{N}$, and $k > 0$, we have $[t^k]\chi^+(G, \mathfrak{H}, t, n) = |C^+_{k, n, G} / \mathfrak{H}|$.

\end{enumerate}
\end{proposition}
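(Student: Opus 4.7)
The plan is to deduce all five parts from Lemma \ref{lem:coeven}, using it as a signed Burnside lemma in the same way the preceding proposition used the usual Burnside lemma. Since each part of the statement says ``apply some specialization/coefficient-extraction operator to $\Omega^+$ or $\chi^+$,'' the strategy is to commute the operator with the averaging over $\mathfrak{G}$, recognize the resulting inner expression as $\sum_{\mathfrak{g}} \sgn(\mathfrak{g})|\Fix_{\mathfrak{g}}(Y)|$ for an appropriate $\mathfrak{G}$-set $Y$, and then quote Lemma \ref{lem:coeven}.

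For part (1), I would write $[M_{\alpha}]\Omega^+(D,\mathfrak{G},\mathbf{x}) = \tfrac{1}{|\mathfrak{G}|}\sum_{\mathfrak{g}} \sgn(\mathfrak{g})[M_{\alpha}]\Omega(D,\mathfrak{G},\mathbf{x};\mathfrak{g})$, and by Theorem~\ref{thm:coloring}(1) the inner coefficient equals $|\Fix_{\mathfrak{g}}(X_{\alpha,D})|$. Applying Lemma~\ref{lem:coeven} to the $\mathfrak{G}$-set $X_{\alpha,D}$ gives $|X_{\alpha,D}^+/\mathfrak{G}|$. Part (3) is obtained the same way but with $X_{n,D}$ in place of $X_{\alpha,D}$, after noting that $\Omega(D,\mathfrak{G},n;\mathfrak{g}) = |\Fix_{\mathfrak{g}}(X_{n,D})|$ by definition. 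Parts (4)--(5) of the digraph statement are exactly analogous: by the definition of $\chi(G,\mathfrak{H},t,\mathbf{x};\mathfrak{g})$ and the $\mathfrak{H}$-invariance of $\asc(f)$, the coefficient $[t^k] \chi(G,\mathfrak{H},t,n;\mathfrak{h})$ equals $|\Fix_{\mathfrak{h}}(C_{k,n,G})|$, so Lemma~\ref{lem:coeven} applied to $C_{k,n,G}$ yields $|C_{k,n,G}^+/\mathfrak{H}|$.

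For part (2), I would mimic the double-sum manipulation used for the orbital version in the previous proposition. Starting from
\[\Omega^+(D,\mathfrak{G},\mathbf{x}) = \frac{1}{|\mathfrak{G}|}\sum_{\mathfrak{g}\in\mathfrak{G}} \sgn(\mathfrak{g}) \sum_{f \in \Fix_{\mathfrak{g}}(P_D)} \mathbf{x}^f,\]
I swap the order of summation to get $\sum_{f \in P_D} \mathbf{x}^f \cdot \tfrac{1}{|\mathfrak{G}|} \sum_{\mathfrak{g} \in \mathfrak{G}_f} \sgn(\mathfrak{g})$. The same cancellation argument as in Lemma~\ref{lem:coeven} shows the inner sum is $0$ unless $\mathfrak{G}_f \subseteq \mathfrak{A}_N$ (i.e.\ $f \in P_D^+$), in which case it equals $|\mathfrak{G}_f|/|\mathfrak{G}| = 1/|\mathfrak{G}(f)|$. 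Combining terms along $\mathfrak{G}$-orbits (each orbit contributes $|\mathfrak{G}(f)|$ equal copies of $\mathbf{x}^f$ weighted by $1/|\mathfrak{G}(f)|$), the sum collapses to $\sum_{f \in \mathcal{T}} \mathbf{x}^f$ as desired. Part (4) for digraphs is obtained identically by carrying the factor $t^{\asc(f)}$ through the same manipulation, which is legitimate because $\asc$ is $\mathfrak{H}$-invariant.

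There is really no serious obstacle here: every piece of the argument is forced by the signed Burnside lemma together with the manipulations already carried out in the proof of the preceding (unsigned) proposition. The only point requiring care is verifying that $\mathfrak{H}$-invariance of $\asc$ allows us to pull the $t^{\asc(f)}$ weight outside the averaging over $\mathfrak{h}$ in parts (4)--(5); this is where one must quote the fact (from Section~\ref{sec:digraph}) that $\asc(\mathfrak{h}f) = \asc(f)$ for $\mathfrak{h} \in \mathfrak{H} \subseteq \Aut(G)$, so that the decomposition $C_{n,G} = \bigsqcup_k C_{k,n,G}$ is $\mathfrak{H}$-stable and Lemma~\ref{lem:coeven} applies componentwise.
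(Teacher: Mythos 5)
Your proposal is correct and matches the paper's intent exactly: the paper states this proposition with no separate proof, saying only that it follows from Lemma \ref{lem:coeven}, and your argument is precisely that signed-Burnside deduction combined with the same fixed-point/orbit manipulations used in the proof of the preceding (unsigned orbital) proposition. Nothing further is needed.
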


Now we discuss the Combinatorial Reciprocity Theorem for orbital quasisymmetric functions. The first result is Theorem 4.7 of Grinberg \cite{grinberg}.
\begin{theorem}
Let $D$ be a locally special double poset on a finite set $N$, and let $\mathfrak{G} \subseteq \Aut(D)$.
Then we have the following identities.
\begin{enumerate}
    \item $(-1)^{|N|} S \Omega^{O}(D, \mathfrak{G}, \mathbf{x}) =  \Omega^+(D^{\ast}, \mathfrak{G}, \mathbf{x}).$
    \item $(-1)^{|N|} S \Omega^{+}(D, \mathfrak{G}, \mathbf{x}) =  \Omega^O(D^{\ast}, \mathfrak{G}, \mathbf{x}).$
    \item $(-1)^{|N|}\Omega^O(D, \mathfrak{G}, -x) = \Omega^+(D^{\ast}, \mathfrak{G}, x)$.
    \item $(-1)^{|N|}\Omega^+(D, \mathfrak{G}, -x) = \Omega^O(D^{\ast}, \mathfrak{G}, x)$.
\end{enumerate}
\label{thm:orbcombrec2}
\end{theorem}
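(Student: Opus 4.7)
The plan is to observe that Theorem \ref{thm:combrecdbl} is really a pointwise statement about class functions: if we unpack the identity $(-1)^{|N|} \sgn\, S\, \Omega(D,\mathfrak{G},\mathbf{x}) = \Omega(D^{\ast},\mathfrak{G},\mathbf{x})$ coordinate by coordinate at each $\mathfrak{g} \in \mathfrak{G}$ (using that the antipode $S$ is $\mathbb{C}$-linear and therefore commutes with multiplication by the scalar $\sgn(\mathfrak{g})$), we obtain
\[
(-1)^{|N|} S\, \Omega(D,\mathfrak{G},\mathbf{x};\mathfrak{g}) = \sgn(\mathfrak{g})\, \Omega(D^{\ast},\mathfrak{G},\mathbf{x};\mathfrak{g})
\]
for every $\mathfrak{g} \in \mathfrak{G}$. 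Once we have this, the four identities in Theorem \ref{thm:orbcombrec2} follow by averaging.

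For identity (1), I would multiply both sides by $\tfrac{1}{|\mathfrak{G}|}$ and sum over $\mathfrak{g} \in \mathfrak{G}$. The left-hand side becomes $(-1)^{|N|} S\, \Omega^{O}(D,\mathfrak{G},\mathbf{x})$, because $S$ is linear and so commutes with the averaging. The right-hand side is precisely $\Omega^{+}(D^{\ast},\mathfrak{G},\mathbf{x})$ by definition. For identity (2), I would multiply the pointwise equation by $\sgn(\mathfrak{g})$ before averaging; using $\sgn(\mathfrak{g})^2 = 1$, the sign cancels on the right and appears on the left, yielding $(-1)^{|N|} S\, \Omega^{+}(D,\mathfrak{G},\mathbf{x}) = \Omega^{O}(D^{\ast},\mathfrak{G},\mathbf{x})$.

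For identities (3) and (4), the plan is to apply principal specialization $\varphi \colon \qsym \to \mathbb{C}[x]$, which is a Hopf algebra homomorphism with $\varphi(S F(\mathbf{x})) = \varphi(F)(-x)$, to the quasisymmetric identities (1) and (2). Because principal specialization is applied pointwise on class functions (as discussed just before Proposition \ref{prop:global2}), it commutes with the averaging $\tfrac{1}{|\mathfrak{G}|}\sum_{\mathfrak{g}}$ that defines $\Omega^{O}$ and $\Omega^{+}$, so it maps $\Omega^{O}(D,\mathfrak{G},\mathbf{x})$ to $\Omega^{O}(D,\mathfrak{G},x)$ and $\Omega^{+}(D,\mathfrak{G},\mathbf{x})$ to $\Omega^{+}(D,\mathfrak{G},x)$. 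Substituting and using $\varphi \circ S$ sends $x \mapsto -x$ then gives the two polynomial reciprocities.

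There is no real obstacle here; the proof is essentially a two-line derivation from Theorem \ref{thm:combrecdbl} together with Lemma \ref{lem:coeven} (which is what justifies calling the averaged $\sgn$-twisted quantity a ``coeven'' enumerator and is what makes the right-hand side of (1) combinatorially meaningful). The only thing to double-check is that $S$ really does commute with the scalar $\sgn(\mathfrak{g})$ when we pass between ``evaluate at $\mathfrak{g}$'' and ``antipode'' on a $\qsym$-valued class function, which is immediate from $\mathbb{C}$-linearity of $S$.
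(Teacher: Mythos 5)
Your proposal is correct and is essentially the paper's own argument: the paper proves the analogous digraph statement (Theorem \ref{thm:orbcombrecgraph}) by exactly this device of evaluating the class-function reciprocity of Theorem \ref{thm:combrecdbl} pointwise, treating $\sgn(\mathfrak{g})$ as a scalar, and averaging with or without the $\sgn$-twist, with the polynomial versions obtained by principal specialization. The only cosmetic difference is that for part (1) the paper simply cites Grinberg's Theorem 4.7 rather than writing out the averaging, but the content is the same.
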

Now we discuss the Combinatorial Reciprocity Theorem for orbital chromatic quasisymmetric functions.
\begin{theorem}
Let $G$ be a digraph on a finite set $N$, and let $\mathfrak{G} \subseteq \Aut(G)$.

Then we have the following identities:
\begin{enumerate}
    \item $(-1)^{|N|} S \chi^{O}(G, \mathfrak{G}, t, \mathbf{x}) =  \overline{\chi}^+(G, \mathfrak{G}, t, \mathbf{x}).$
    \item $(-1)^{|N|} S \chi^{+}(G, \mathfrak{G}, t, \mathbf{x}) =  \overline{\chi}^O(G, \mathfrak{G}, t, \mathbf{x}).$
    \item $(-1)^{|N|}\chi^O(G, \mathfrak{G},t,  -x) = \overline{\chi}^+(G, \mathfrak{G},t, x)$.
    \item $(-1)^{|N|}\chi^+(G, \mathfrak{G},t, -x) = \overline{\chi}^O(G, \mathfrak{G},t, x)$.
\end{enumerate}
\label{thm:orbcombrecgraph}
\end{theorem}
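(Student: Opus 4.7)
The plan is to reduce the four orbital reciprocity statements to the class-function reciprocity already established in Theorem \ref{thm:combrecdig}, exploiting the fact that $S$ and evaluation at $-x$ act on the quasisymmetric-function value of the class function, while the averaging over $\mathfrak{G}$ (with or without a $\sgn$ weight) is just a $\mathbb{C}$-linear combination of those values. Since the antipode $S$ is $\mathbb{C}$-linear, it commutes with this averaging, so all four identities should fall out from manipulating the signs.

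First I would prove identity (1). Starting from the definition
\[\chi^O(G,\mathfrak{G},t,\mathbf{x}) = \frac{1}{|\mathfrak{G}|}\sum_{\mathfrak{g}\in\mathfrak{G}} \chi(G,\mathfrak{G},t,\mathbf{x};\mathfrak{g}),\]
apply $(-1)^{|N|}S$ to both sides. Pulling $S$ inside the sum and using Theorem \ref{thm:combrecdig} pointwise, which says $(-1)^{|N|}S\,\chi(G,\mathfrak{G},t,\mathbf{x};\mathfrak{g}) = \sgn(\mathfrak{g})\,\overline{\chi}(G,\mathfrak{G},t,\mathbf{x};\mathfrak{g})$, gives exactly $\overline{\chi}^+(G,\mathfrak{G},t,\mathbf{x})$ by definition of the coeven invariant. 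For identity (2), the same computation but with the $\sgn(\mathfrak{g})$ factor already present in $\chi^+$ produces a $\sgn(\mathfrak{g})^2 = 1$ inside the sum, yielding $\overline{\chi}^O$.

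For (3) and (4) I would apply the principal specialization $\ps$ to the quasisymmetric identities (1) and (2). Since $\ps$ is a Hopf algebra homomorphism with $\ps\circ S = S \circ \ps$ and $S p(x) = p(-x)$ on polynomials (as recorded in Section~\ref{sec:prelim}), principal specialization converts $(-1)^{|N|}S$ on quasisymmetric functions into $(-1)^{|N|}$ composed with the substitution $x \mapsto -x$; averaging commutes with $\ps$ because it too is $\mathbb{C}$-linear, so (3) and (4) follow immediately from (1) and (2) respectively.

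There is essentially no obstacle here: the work is really just bookkeeping on the sign factor $\sgn(\mathfrak{g})$. The only small point worth verifying is that $S$ (and evaluation at $-x$) act on the values of the class function without affecting the class function structure on $\mathfrak{G}$, which is clear since $S$ is a $\mathbb{C}$-linear endomorphism of $\qsym$ and our class functions take values in $\qsym[t]$; likewise $\sgn$ is constant on conjugacy classes, so the weighted averages are well-defined class-function operations. Thus the proof should be short and formulaic, essentially an application of Theorem \ref{thm:combrecdig} inside two linear-combination operators.
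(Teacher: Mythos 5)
Your proposal is correct and matches the paper's own argument: the paper likewise expands $\chi^O$ (resp.\ $\chi^+$) as an average over $\mathfrak{G}$, pulls the linear operator $(-1)^{|N|}S$ inside the sum, applies Theorem \ref{thm:combrecdig} pointwise so that the $\sgn(\mathfrak{g})$ factors produce $\overline{\chi}^+$ (resp.\ $\overline{\chi}^O$), and treats the polynomial statements by the same specialization bookkeeping. No gaps.
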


\begin{proof}
All formulas are proven in a similar manner, so we only prove the first formula. Let $G$ be a digraph on a finite set $N$, and let $\mathfrak{G} \subseteq \Aut(G)$. By Theorem \ref{thm:combrecdig}, we have
\begin{align*}
    (-1)^{|N|} S \chi^O(G, \mathfrak{G}, t, \mathbf{x}) & = \frac{(-1)^{|N|}}{|\mathfrak{G}|} \sum\limits_{\mathfrak{g} \in \mathfrak{G}} S\chi(G, \mathfrak{G}, t, \mathbf{x}; \mathfrak{g}) \\
    & = \frac{1}{|\mathfrak{G}|} \sum\limits_{\mathfrak{g} \in \mathfrak{G}} \sgn(\mathfrak{g})\overline{\chi}(G, \mathfrak{G}, t, \mathbf{x}; \mathfrak{g}) \\
   & = \overline{\chi}^+(G, \mathfrak{G}, t, \mathbf{x}).
\end{align*}
\end{proof} 

\section{Future Directions}
\label{sec:future}

First, we note that Stapledon \cite{stapledon} defines a different generalization of the $h^{\ast}$-vector. His work involves a group $\mathfrak{G}$ acting on the lattice points of a polytope. Given a quasipolynomial $p(x)$, whose coefficients are characters, he defines 
\[ \sum\limits_{n \geq 0} p(n) t^n = \frac{h^{\ast}(t)}{(1-t)\det[I-t\rho]}\] 
The $h^{\ast}$-vector is defined for any class function that takes on values in the ring of quasipolynomials, while our $h$-vector is only defined for class functions in the ring of polynomials, so the $h^{\ast}$-vector is a more general invariant. When we restrict to polynomials, the $h^{\ast}$-vector is different than the $h$-vector.
Is the order polynomial class function of a double poset $h^{\ast}$-effective? Is the chromatic polynomial of a digraph $h^{\ast}$-effective?

There are other bases of quasisymmetric functions. A very recent basis is the basis of quasisymmetric power sums $\Psi_{\alpha}$, introduced in \cite{powersums}. It has been shown that the $\spe{P}$-partition enumerator, and the chromatic quasisymmetric function of a digraph are both $\Psi$-positive \cite{alexandersson}. Is the $D$-partition quasisymmetric class function $\Psi$-effective? Is the chromatic quasisymmetric class function $\Psi$-effective? 

We also would like to have a better description of $\Omega^O(D, \mathfrak{G}, \mathbf{x})$ in the $F$ basis. We know the coefficients are positive. What do they count? 

On a similar note, is the $f$-vector $\Omega^O(D, \mathfrak{G}, x)$ effectively unimodal? This would mean that there exists an $i$ such that $f_j \leq_{\mathfrak{G}} f_k$ for $k \leq i$, and $f_j \geq_{\mathfrak{G}} f_k$ for $i \leq j$. This is still an open question even for trivial group actions. Our example in Figure \ref{fig:notFincreasing} shows that the $h$-vector for a locally special double poset can fail to be effectively unimodal. We can obtain the $h$-vector of $\Omega^O(D, \mathfrak{G}, x)$ by taking the coefficients of $\chi_1$ in the $h$-vector of $\Omega(D, \mathfrak{G}, x)$. Doing so results in the sequence $(0,0,1,0,1,0)$ which fails to be unimodal.

Finally, we could consider proving our results using the theory of combinatorial Hopf monoids, as studied by Aguiar and Mahajan \cite{aguiar-mahajan-1}. We will pursue this idea in a subsequent paper, using the theory of Hopf monoids to prove $F$-effectiveness and combinatorial reciprocity results for quasisymmetric class function invariants associated other combinatorial Hopf monoids.

\bibliographystyle{amsplain}  
\bibliography{orbital}

\end{document}